\newcommand \bN {\mathbb N_+}
\newcommand \bR {\mathbb R}
\newcommand \bT {\mathbb T}
\newcommand \bZ {\mathbb Z}
\newcommand \cA {\mathcal A}
\newcommand \cE {\mathcal E}
\newcommand \cL {\mathcal L}
\newcommand \cR {\mathcal R}
\newcommand \cS {\mathcal S}
\newcommand \cY {\mathcal Y}
\newcommand \fp {\mathfrak p}
\newcommand \fr {\mathfrak r}
\newcommand \ep {\epsilon}
\newcommand \ve {\varepsilon}
\newcommand \la {\lambda}	
\newcommand \vf {\varphi}
\newcommand \vr {\varrho}
\newcommand \bsf {\boldsymbol\phi}
\newcommand \bst {\boldsymbol\tau}
\newcommand \bsu {\boldsymbol\upsilon}
\newcommand{\Romannum}[1]{\uppercase\expandafter{\romannumeral#1\relax}}
\newcommand{\romannum}[1]{\romannumeral#1\relax}
\newcommand \CS {Cauchy--Schwarz }
\newcommand \Gro {Gr\"onwall's }
\newcommand \ini {\mathrm{ini}}
\renewcommand{\>}{\big\rangle}
\newcommand{\R}{\mathbb R}
\newcommand{\Z}{\mathbb Z}
\newcommand{\T}{\mathbb T}
\newcommand{\Lcal}{\mathcal{L}} 
\newcommand{\Ecal}{\mathcal{E}}
\newcommand{\qfrak}{\mathfrak{q}}
\newcommand{\pfrak}{\mathfrak{p}}
\begin{document}

%%%%%%%%%%%%%%%%%%%%%%%%%%%%%%%%%%%%%%%%%%%%%%%%%%%%%%%%%%%%%%%%%%%
%%                                                               %%
%% No need for \maketitle.                                       %%
%%                                                               %%
%%%%%%%%%%%%%%%%%%%%%%%%%%%%%%%%%%%%%%%%%%%%%%%%%%%%%%%%%%%%%%%%%%%

%%%%%%%%%%%%%%%%%%%%%%%%%%%%%%%%%%%%%%%%%%%%%%%%%%%%%%%%%%%%%%%%%%%
%%                                                               %%
%% Please replace what follows by the body of your article       %%
%% (up to the bibliography):                                     %%
%%                                                               %%
%%%%%%%%%%%%%%%%%%%%%%%%%%%%%%%%%%%%%%%%%%%%%%%%%%%%%%%%%%%%%%%%%%%

\section{Introduction}
\label{sec:intr}

One of the main aims in the theory of hydrodynamic limit is to derive partial differential equations from microscopic systems. For asymmetric systems with one or several conservation laws, the conserved quantity/quantities usually evolve macroscopically according to the hyperbolic system under the Euler scaling (time accelerated by $N$ and space divided by $N$), \emph{cf.}\,\cite{rezakhanlou91,OVY93,klscaling,BHO19} for example.  To refine the hydrodynamic limit, when the system has only one conservation law, in the seminal paper \cite{esposito1994diffusive}, Esposito, Marra and Yau perturb the asymmetric simple exclusion processes around the equilibrium point with order $N^{-1}$ at the initial time in dimension $d \geq 3$, and show that the evolution of the perturbed quantity, namely the \emph{equilibrium perturbation}, is governed by the viscous Burgers equation under the diffusive scaling (time accelerated by $N^2$ and space divided by $N$). This is closely related to the understanding of Navier--Stokes corrections and is called the incompressible limit in the literature, \emph{cf.}\,\cite[Section 7.7]{klscaling} for details. Later, such kind of result  is extended by Sepp{\"a}l{\" a}inen \cite{seppalainen2001perturbation} to dimension $d=1$. He considers the one-dimensional Hammersley’s model, adds a perturbation of order $N^{-\alpha}$ to the equilibrium,  and shows that the perturbation macroscopically obeys the inviscid Burgers equation in the time scale $N^{1+\alpha}t$ if $0 < \alpha< 1/2$. The proof uses combinatorial properties of the Hammersley's model and coupling techniques, and the result holds even beyond the appearance of shocks. Independently, T{\'o}th and Valk{\'o} \cite{toth2002between} obtain the same result for a large class of one-dimensional interacting particle systems, which is called \emph{deposition} models in their paper, but only for  $0 < \alpha < 1/5$. T{\'o}th and Valk{\'o} employ Yau's relative entropy method \cite{yau1991relative}, thus the result holds only in the smooth regime of the solutions. Very recently, Jara, Landim and Tsunoda \cite{JLT21} consider equilibrium perturbations in weakly asymmetric exclusion processes and derive viscous Burgers equation under the diffusive scaling.
Equilibrium perturbations have also been considered for systems with two conservation laws. In \cite{toth2005perturbation}, T{\'o}th and Valk{\'o} prove for a very rich class of systems that the small perturbations are universally driven by a two-by-two system. In \cite{Valko06}, Valk{\'o} shows that small perturbations around a hyperbolic equilibrium point evolve according to two decoupled Burgers equations.
For system with three or more conservation laws, the problem remains generally open.
 
\subsection{Nonlinear geometric optics for conservation laws}

We start from illustrating the idea of nonlinear geometric optics.
For $n\ge1$, consider an $n$-system of conservation laws
\begin{equation}
\label{eq:cl}
  \partial_t\mathbf f(t,v)+\big(m\cdot\nabla_v\big)J\big(\mathbf f(t,v)\big)=0, \quad \mathbf f(0,\cdot)=\mathbf f^\ini,
\end{equation}
where $v\in\bR^d$, $\mathbf f^\ini \in C^1(\bR^d;\bR^n)$, $m\in\bR^d$, $J:\bR^n\to\bR^n$ is a smooth, nice function and
\begin{equation*}
  \big(m\cdot\nabla_v\big)J\big(\mathbf f(v)\big) := \left( m_1\frac \partial{\partial v_1}+...+m_d\frac\partial{\partial v_d} \right) J\big(\mathbf f(v)\big) \in \bR^n.
\end{equation*}
To avoid the discussion on boundary terms, we adopt the periodic condition $\mathbf f(t,v+e_i)=\mathbf f(t,v)$ for each $i=1$, ..., $d$.
It is equivalent to say that the space variable $v\in\bT^d:=(\bR/\bZ)^d$.

The stationary solutions to \eqref{eq:cl} are given by constant functions.
We are interested in the behavior of the solution when the initial condition is slightly perturbed from the stationary.
More precisely, fix a vector $\mathbf w_*\in\bR^n$ and consider the perturbed system
\begin{equation*}
  \partial_t\mathbf f^\ve(t,v)+\big(m\cdot\nabla_v\big)J\big(\mathbf f^\ve(t,v)\big)=0, \quad \mathbf f^\ve(0,\cdot)=\mathbf w_*+\ve\mathbf w^\ini,
\end{equation*}
where $\ve>0$ and $\mathbf w^\ini \in C^1(\bT^d;\bR^n)$.
Suppose that $\mathbf f^\ve$ decomposes as
\begin{equation*}
  \mathbf f^\ve(t,v)=\mathbf w_*+\ve\mathbf w(t,v)+\mathbf w^\ve(t,v), \quad |\mathbf w^\ve(t,v)|=o(\ve).
\end{equation*}
By expanding $J$ at $\mathbf w_*$, we obtain the linearized equation of the first order component:
\begin{equation}
\label{eq:linear-cl}
  \frac{\partial\mathbf w}{\partial t} + A\sum_{i=1}^d m_i\frac{\partial\mathbf w}{\partial v_i}=0, \quad \mathbf w(0,\cdot)=\mathbf w^\ini,
\end{equation}
where $A=DJ(\mathbf w_*)$ is the Jacobian matrix.
Assume that \eqref{eq:cl} is strictly hyperbolic at $\mathbf w_*$: $A$ has $n$ distinct, non-zero eigenvalues $\la_1$, ..., $\la_n$, with the corresponding left (right) eigenvectors denoted by $\mathbf l_1$, ..., $\mathbf l_n$ ($\mathbf r_1$, ..., $\mathbf r_n$):
\begin{align*}
  \mathbf l'_j(\la_j\mathrm{Id}-A)=(\la_j\mathrm{Id}-A)\mathbf r_j=0, \quad \mathbf l'_j\mathbf r_k=\mathbf 1_{\{j=k\}}.
\end{align*}
From \eqref{eq:linear-cl}, $(d/dt)\mathbf l'_j\mathbf w=0$ along each characteristic lines $(t,v+\la_jtm)$.
Hence,
\begin{align*}
  \mathbf w(t,v)=\sum_{j=1}^n \mathbf l'_j\mathbf w^\ini\big(\phi_j(t,v)\big)\mathbf r_j, \quad \phi_j:=v-\la_jtm.
\end{align*}

To observe non-trivial evolution along these lines, we investigate at longer time $\ve^{-1}t$.
Assume that the solution decomposes further to the second order as
\begin{align*}
  \mathbf f^\ve(t,x)=\mathbf w_*+\ve\sum_{j=1}^n \sigma_j(\ve t,\phi_j)\mathbf r_j+\ve^2\sum_{j=1}^n \tilde\sigma_j(\ve t,\bsf)\mathbf r_j+o(\ve^2),
\end{align*}
where $\bsf=(\phi_1,\ldots,\phi_n)\in\bR^{d \times n}$, $\sigma_j=\sigma_j(s,u)$ and $\tilde\sigma_j=\tilde\sigma_j(s,u_1,\ldots,u_n)$ are $C^1$ functions for $j=1$, ..., $n$.
Inserting the expansion into \eqref{eq:cl} and noting that $A\mathbf r_j=\la_j\mathbf r_j$, we see that the terms of order $\ve$ cancel autonomously, while the higher order terms read
\begin{equation*}
  \begin{aligned}
    \ve^2\sum_{j=1}^n \partial_s\sigma_j(\ve t,\phi_j)\mathbf r_j+\ve^2\sum_{j,j'} \big((\la_j-\la_{j'})m\cdot\nabla_{u_{j'}}\big)\tilde\sigma_j(\ve t,\bsf)\mathbf r_j\\
    +\ve^2\sum_{j,j'} \sigma_j(\ve t,\phi_j)\big(m\cdot\nabla_u\big)\sigma_{j'}(\ve t,\phi_{j'})H(\mathbf r_j,\mathbf r_{j'})=o(\ve^2),
  \end{aligned}
\end{equation*}
where $H$ refers to the Hessian matrix of $J$. Since $\mathbf l'_k\mathbf r_j=\mathbf1_{\{k=j\}}$, the above holds true if and only if for each $k=1$, ..., $n$,
\begin{equation}
\label{eq:burgers0}
  \begin{aligned}
    \partial_s\sigma_k(s,u_k)+\sum_{j=1}^n \big((\la_k-\la_j)m\cdot\nabla_{u_j}\big)\tilde\sigma_k(s,u_1,\ldots,u_n)\\
    +\sum_{j,j'} \mathbf l'_kH(\mathbf r_j,\mathbf r_{j'})\sigma_j(s,u_j)\big(m\cdot\nabla_u\big)\sigma_{j'}(s,u_{j'})=0.
  \end{aligned}
\end{equation}

We restrict our discussion to the \emph{non-resonant} situation, in which the wave of each frequency $\la_k$ performs self-consistent time evolution governed by the Burgers equation
\begin{equation}
\label{eq:burgers}
  \partial_s\sigma_k + \mathbf l'_kH(\mathbf r_k,\mathbf r_k)\big(m\cdot\nabla_u\big)\big(2^{-1}\sigma_k^2\big)=0, \quad \forall\,k=1,\ldots,n.
\end{equation}
Comparing \eqref{eq:burgers0} and \eqref{eq:burgers}, $\tilde\sigma_k$ then has to solve the equation
\begin{equation*}
  \begin{aligned}
    &\sum_{j=1}^n \big((\la_j-\la_k)m\cdot\nabla_{u_j}\big)\tilde\sigma_k(s,u_1,\ldots,u_n)\\
    =&\sum_{(j,j')\not=(k,k)} \mathbf l'_kH(\mathbf r_j,\mathbf r_{j'})\sigma_j(s,u_j)\big(m\cdot\nabla_u\big)\sigma_{j'}(s,u_{j'}).
  \end{aligned}
\end{equation*}
By superposition, we only need to construct $\tilde\sigma_k$ as
\begin{equation*}
  \tilde\sigma_k(s,u_1,\ldots,u_n) := \sum_{j\not=k} \frac{\mathbf l'_kH(\mathbf r_j,\mathbf r_j)\sigma_j^2(s,u_j)}{2(\la_j-\la_k)}+\sum_{j\not=j'} \tilde\sigma_{k,j,j'}(s,u_j,u_{j'}),
\end{equation*}
where the functions $\tilde\sigma_{k,j,j'}=\tilde\sigma_{k,j,j'}(s,u,u')$ satisfy that
\begin{equation}
\label{eq:correction0}
  \begin{aligned}
    \sum_{j=1}^n \big((\la_j-\la_k)m\cdot\nabla_u\big)\sum_{j\not=j'} \tilde\sigma_{k,j,j'}(s,u_j,u_{j'})\\
    =\sum_{j\not=j'} \mathbf l'_kH(\mathbf r_j,\mathbf r_{j'})\sigma_j(s,u_j)\big(m\cdot\nabla_u\big)\sigma_{j'}(s,u_{j'}).
  \end{aligned}
\end{equation}
Hence, we formally obtain a \emph{sufficient condition} for the non-resonant case:
\begin{equation}
\label{eq:correction}
  \begin{aligned}
    &\big((\la_j-\la_k)m\cdot\nabla_u+(\la_{j'}-\la_k)m\cdot\nabla_{u'}\big)\tilde\sigma_{k,j,j'}(s,u,u')\\
    =\,&\mathbf l'_kH(\mathbf r_j,\mathbf r_{j'})\sigma_j(s,u)\big(m\cdot\nabla_u\big)\sigma_{j'}(s,u'),
  \end{aligned}
\end{equation}
must be solvable for all $k$, $j$, $j'=1$, ..., $n$ such that $j\not=j'$.
Notice that the formal calculation above apparently relies on the smoothness of $\sigma_k$, so we need to assume that $t<T_\mathrm{shock}$, where $T_\mathrm{shock}$ is the time when the shock first appears in the entropy solution to \eqref{eq:burgers}.

Observe that the non-resonant condition holds autonomously if $n=1$.
If $n=2$, $d=1$, without loss of generality we can set $m=1$ and solve \eqref{eq:correction} explicitly as
\begin{equation*}
  \begin{aligned}
    \tilde\sigma_{1,1,2}=c_1\sigma_1(s,u)\sigma_2(s,u'), \quad
    \tilde\sigma_{1,2,1}=c_1\Sigma_2(s,u)\partial_u\sigma_1(s,u'),\\
    \tilde\sigma_{2,1,2}=c_2\Sigma_1(s,u)\partial_u\sigma_2(s,u'), \quad
    \tilde\sigma_{2,2,1}=c_2\sigma_2(s,u)\sigma_1(s,u'),
  \end{aligned}
\end{equation*}
where $c_1=-(\la_1-\la_2)^{-1}\mathbf l_1'H(\mathbf r_1,\mathbf r_2)$, $c_2=(\la_1-\la_2)^{-1}\mathbf l_2'H(\mathbf r_1,\mathbf r_2)$ and $\Sigma_1$, $\Sigma_2$ are primitives of $\sigma_1$, $\sigma_2$, respectively, i.e., $\Sigma_1=\int_0^\cdot \sigma_1du$, $\Sigma_2=\int_0^\cdot \sigma_2du$.
To make $\Sigma_j$ well-defined on $\bT$, we shall assume in addition that $\int_\bT \sigma_j(0,u)du=0$ for both $j=1$, $2$, or equivalently
\begin{equation}
\label{eq:assp-chain}
  \int_\bT \mathbf w^\ini(u)du=\int_\bT \big[\sigma_1(0,u)\mathbf r_1+\sigma_2(0,u)\mathbf r_2\big]du=0.
\end{equation}
The non-resonant expansion is not in general valid for the system with $n\ge3$.
We refer the readers to \cite{Majda84,DiPerM85} for rigorous justification and detailed discussion for the resonant case.

\subsection{Equilibrium perturbation}

The goal of the present paper is to derive the non-resonant system \eqref{eq:burgers} of Burgers equations as a decent scaling limit for some stochastic interacting system.
Formally speaking, for a scaling parameter $N\in\bN$, we study a particle system $\zeta(t)=\{\zeta_x(t)\}$, where the microscopic position $x$ belongs to the periodic lattice $\bT_N^d:=\bZ^d/(N\bZ^d)$.
Suppose that the system has $n$ conserved quantities $\mathbf g=(g_1,\ldots,g_n)$ such that, under the \emph{Euler} space-time rescaling $(Nt,[Nv])$ their empirical densities evolve with the macroscopic conservative system (\emph{hydrodynamic limit}):
\begin{equation*}
  \lim_{N\to\infty} \frac1{N^d}\sum_{x\in\bT_N^d} \mathbf g\big(\zeta_x(Nt)\big)\delta_{\frac xN}(dv) \Rightarrow \mathbf f(t,v)dv, \quad N\to\infty,
\end{equation*}
where $\mathbf f$ is the solution to \eqref{eq:cl}.
To do the perturbation, fix $\alpha>0$, $\mathbf w_*\in\bR^n$, $\mathbf w^\ini \in C^1(\bT^d;\bR^n)$ and start the dynamics from some initial distribution such that
\begin{equation*}
  \frac1{N^d}\sum_{x\in\bT_N^d} \mathbf g\big(\zeta_x(0)\big)\delta_{\frac xN}(dv) \approx \big(\mathbf w_*+N^{-\alpha}\mathbf w^\ini(v)\big)dv, \quad N\to\infty.
\end{equation*}
Let $\{\sigma_k;k=1,\ldots,n\}$ be the smooth solutions to \eqref{eq:burgers}.
In the non-resonant situation, the argument in the previous part suggests the formal asymptotic formula
\begin{equation*}
  \frac1{N^d}\sum_{x\in\bT_N^d} \mathbf g\big(\zeta_x(Nt)\big)\delta_{\frac xN}(dv) \approx \left[ \mathbf w_*+N^{-\alpha}\sum_{j=1}^n \sigma_j(N^{-\alpha}t,v-\la_jtm)\mathbf r_j \right] dv.
\end{equation*}
Choosing $\kappa\le\alpha$ and using the variables $s=N^{-\kappa}t$, $u=v-\la_ktm$, one obtains that
\[ \frac1{N^{d}}\sum_{x\in\bT_N^d} \mathbf l'_k\big[\mathbf g\big(\zeta_x(N^{1+\kappa}s)\big)-\mathbf w_*\big] \delta_{\frac xN-N^\kappa\la_ksm}(du) \approx N^{-\alpha} \sigma_k(N^{\kappa-\alpha}s,u)du,\]
or equivalently,
\begin{equation}
\label{eq:equi-pert0}
  \frac1{N^{d-\alpha}}\sum_{x\in\bT_N^d} \mathbf l'_k\big[\mathbf g\big(\zeta_x(N^{1+\kappa}s)\big)-\mathbf w_*\big] \delta_{\frac xN-N^\kappa\la_ksm}(du) \approx \sigma_k^{(\alpha,\kappa)}(s,u)du,
\end{equation}
as $N\to\infty$ for each $k=1$, ..., $n$, where
\begin{equation*}
  \sigma_k^{(\alpha,\kappa)}(s,u)=
  \begin{cases}
    \sigma_k(s,u), &\text{if }\,\kappa=\alpha,\\
    \sigma_k^\ini(u):=\mathbf l'_k\mathbf w^\ini(u), &\text{if }\,\kappa<\alpha.
  \end{cases}
\end{equation*}

\begin{remark}[Fluctuation]
The convergence in \eqref{eq:equi-pert0} is available only if $\alpha<d/2$.
To see that, let us assume that the equilibrium states of the dynamics are given by the family of canonical Gibbs measures $\otimes_x \nu_{\mathbf w}(d\zeta_x)$, where $\mathbf w$ is the corresponding equilibrium value of the conserved vector $\mathbf g$.
Starting the dynamics from equilibrium initial state with $\mathbf w=\mathbf w^*$.
If $\mathbf g$ possesses finite variance, then the central limit theorem for i.i.d. random variables yields that for each time $s>0$,
\begin{equation*}
  \frac1{N^{d/2}}\sum_{x\in\bT_N^d} \mathbf l'_k\big[\mathbf g\big(\zeta(s)\big)-\mathbf w^*\big]\delta_{\frac xN}(du)
\end{equation*}
converges weakly, as $N\to\infty$, to some Gaussian random field.
The macroscopic time evolutions of these fields are called the equilibrium fluctuations.
For the equilibrium fluctuations for the models studied in this paper, see, e.g., \cite{chang2001equilibrium,Goncalves08,OllaX20}.
\end{remark}

In this paper, we discuss two specific microscopic interacting systems: (\romannum1) the generalized exclusion in any dimension, where the total number of particles is the only conservation law ($n=1$) and (\romannum2) the nonlinear Hamiltonian dynamics in one dimension, disturbed by noises that preserve only the total momentum and volume stretch ($n=2$).
We derive the equilibrium perturbation in \eqref{eq:equi-pert0} as the hydrodynamic limit under the incompressible Euler scaling, see Theorem \ref{coro1} and \ref{thm:chain-equi-pert}, respectively.

\subsubsection{Generalized exclusion process}

In Section \ref{sec:gasep} and \ref{sec:gasep-rel-ent}, we study the $d$-dimensional \emph{generalized exclusion process} with at most $K>0$ particles per site. By \cite{rezakhanlou91,klscaling}, the density of the particles under the Euler scaling evolves according to the hyperbolic equation
\begin{equation}\label{hyperPDE}
\partial_t \varrho + m \cdot \nabla J(\varrho) = 0, \quad \varrho(0,\cdot)=\varrho^{\rm ini},
\end{equation}
where $J(\varrho) = \varrho (K-\varrho)$ is the macroscopic flux at density $\varrho$, and $m \neq 0$ is the mean  of the underlying transition kernel.  Fix $\varrho_* \in (0,K)$.  We start the process from a perturbation of the constant profile
\begin{equation}\label{rhoNini}
\varrho_N^{\rm ini} (u):= \varrho_* + N^{-\alpha} \rho^{\rm ini} (u),
\end{equation}
where $\rho^{\rm ini} \in C^{\infty} (\T^d)$. We speed up the process by $N^{1+\kappa}$, $\kappa \leq \alpha$, and choose the reference measure associated to the \emph{smooth} profile
\[\vr_*+N^{-\alpha}\rho(N^{\kappa-\alpha}t,u-N^\kappa\la tm),\]
where $\lambda := J^\prime (\varrho_*) = K - 2 \varrho_*$ and $\rho$ is the classical solution to the Burgers equation
\begin{equation}\label{burgers}
\partial_s \rho(s,u) - m \cdot \nabla\big(\rho^2(s,u)\big) = 0, \quad \rho(0,u)=\rho^{\rm ini}(u),
\end{equation}
up to the first shock appears. Then, in Theorem \ref{thm1}, we show that under some restrictions on $\kappa$ and $\alpha$, the relative entropy is of order $o(N^{d-2\alpha})$.  In particular, we partially extend the results in \cite{toth2002between} to $0 < \alpha < 1/3$ if $d = 1$, and to $0 < \alpha < 1$ if $d  \geq 2$.  As a result, in Theorem \ref{coro1}, we show the perturbed quantity evolves according to \eqref{burgers}.

The proof relies on the refined relative entropy method recently introduced by Jara and Menezes\;\cite{jara2018non,jaram18nonequilireaction}, also used in many other contexts, e.g., \cite{funaki2019motion,jara2020stochastic,JLT21}. Compared to the technique in \cite{toth2002between}, our proof does not involve the spectral gap estimate or logarithmic Sobolev inequality for the underlying microscopic dynamics, which is known as a hard problem for general interacting particle systems. It also remains as an interesting question to check whether the techniques extend to cover also the case $d \geq 3$ and $\alpha = \kappa =1$ in \cite{esposito1994diffusive}.

\subsubsection{Anharmonic chain of oscillators}

In Section \ref{sec:chain} and \ref{sec:chain-rel-ent}, we study the one-dimensional \emph{chain of (unpinned) anharmonic oscillators} with conservative noise.
The noise is modeled by Langevin thermostats acting at each position and fixing the temperature to $T=\beta^{-1}$.
The only two conserved quantities of the dynamics are the momentum $\fp$ and the length $\fr$, hence it corresponds to the case $n=2$, $d=1$.
The hydrodynamic equation is given by the following $p$-system
\begin{align*}
  \partial_t\fp=\partial_v\bst(\fr), \quad \partial_t\fr=\partial_v\fp, \quad (\fp,\fr)(0,\cdot)=(\fp,\fr)^\ini,
\end{align*}
where $\bst =\bst_\beta (\fr)$ is the equilibrium tension.
It is proved in \cite{EvenO14} together with the energy conservation law in the smooth regime and in \cite{MarcheO18} after the appearance of shocks.

Define $J(\fp,\fr):=(-\bst(\fr),-\fp)$ and fix $(\fp_*,\fr_*)\in\bR^2$ such that $\bst'(\fr_*)\not=0$.
Denote by $A=DJ(\fp_*,\fr_*)$ and note that $A\bsu_-=\sqrt{\bst'(\fr_*)}\bsu_-$, $A\bsu_+=-\sqrt{\bst'(\fr_*)}\bsu_+$ for $\bsu_\pm=(\pm\sqrt{\bst'(\fr_*)},1)'$.
Similarly to the exclusion process, we start the dynamics from a perturbed profile
\begin{equation*}
  (\fp_*, \fr_*)'+N^{-\alpha}\sigma_-^\ini\bsu_-+N^{-\alpha}\sigma_+^\ini\bsu_+,
\end{equation*}
where $\sigma_\pm^\ini \in C^1(\bT)$.
We also speed up the time to be $N^{1+\kappa}t$ for some $\kappa\le\alpha$.
Choose the reference measure associated to the slowly varying parameters
\begin{equation*}
  (\mathfrak p_*, \mathfrak r_*)'+N^{-\alpha}\sum_{j=\pm} \sigma_j\big(N^{\kappa-\alpha}t,v+jN^\kappa\sqrt{\bst'(\fr_*)}t\big)\bsu_j,
\end{equation*}
where $(\sigma_-,\sigma_+)$ is the smooth solution to
\begin{equation}
\label{eq:chain-burgers}
  \begin{aligned}
    &\partial_s\sigma_-(s,u)+\frac{\boldsymbol\tau''(\mathfrak r_*)}{4\sqrt{\boldsymbol\tau'(\mathfrak r_*)}}\partial_u\big(\sigma_-^2(s,u)\big)=0, \quad \sigma_-(0,u)=\sigma_-^\ini(u),\\
    &\partial_s\sigma_+(s,u)-\frac{\boldsymbol\tau''(\mathfrak r_*)}{4\sqrt{\boldsymbol\tau'(\mathfrak r_*)}}\partial_u\big(\sigma_+^2(s,u)\big)=0, \quad \sigma_+(0,u)=\sigma_+^\ini(u).
  \end{aligned}
\end{equation}
We prove in Theorem \ref{thm:chain-rel-ent} that in the smooth regime, the relative entropy grows with the order $o(N^{1-2\alpha})$.
As a consequence, we prove in Theorem \ref{thm:chain-equi-pert} that the macroscopic perturbation is governed by the decoupled system \eqref{eq:chain-burgers}.

The proof of the relative entropy estimate is based on the equivalence of ensembles for inhomogeneous Gibbs states and the uniform gradient estimate for the Poisson equation associated to the generator of the stochastic dynamics, \emph{cf.} \cite[Section 8 \& 9]{Xu20}.
It is worth pointing out that, unlike the exclusion, the equilibrium perturbation for Hamiltonian dynamics has not been investigated before, mainly due to the technical difficulties in obtaining fine estimate for the relative entropy.

\section{Asymmetric generalized exclusion process}
\label{sec:gasep}

In this and the next sections, we consider the asymmetric generalized exclusion process. Fix a positive integer $K > 0$, which denotes the maximum number of particles at each site.    The state space of the generalized exclusion process is $\Omega_N^d:= \{0,1,\ldots,K\}^{\T_N^d}$. For a configuration $\eta \in \Omega_N^d$, $\eta_x$ denotes the number of particles at site $x\in\bT_N^d$.
Denote by $\{e_i\}_{1 \leq i \leq d}$ the canonical basis of $\bT_N^d$ and let $e_i=-e_{i-d}$ for $i=d+1$, ..., $2d$.
Given the jump rates $\{p_i\ge0;1 \le i \le 2d\}$, consider the generator $\cL_N$ which is given for any function $f: \Omega_N^d \rightarrow \R$ by
\begin{align*}
  \cL_N f(\eta) = \sum_{i=1}^{2d} \sum_{x\in\bT_N^d} c_{x,i}(\eta)\big[f(\eta^{x,x+e_i})-f(\eta)\big],
\end{align*}
where the jump rate is given by $c_{x,i}(\eta):=p_i\eta_x(K-\eta_{x+e_i})$
and $\eta^{x,y}$ is the configuration obtained from $\eta$ after a particle jumps from $x$ to $y$,
\[(\eta^{x,y} )_z= \begin{cases}
	\eta_x - 1, \quad z = x,\\	
	\eta_y + 1, \quad z = y,\\
	\eta_z, \quad \text{otherwise.}\\
\end{cases}\]
Assume that $p_i+p_{i+d}>0$ for each $1 \le i \le d$.
Denote $m=(m_i)_{1 \le i \le d}\in\bR^d$ where $m_i=p_i-p_{i+d}$.
Also assume that $m$ is a nonzero vector, hence $\cL_N$ is asymmetric.

The generator $\cL_N$ has a family of  product invariant measures indexed by the particle density.
For $\vr\in[0,K]$, let $\nu_\vr^1$ be the binomial measure $B(K,\vr K^{-1})$:
\begin{align*}
  \nu_\vr^1(k):=\binom Kk \left( \frac\vr K \right)^k \left( 1-\frac\vr K \right)^{K-k}, \quad \forall\,k=0,\ldots,K.
\end{align*}
Denote by $\nu_\vr^N$ the product measure on $\Omega_N^d$ such that $\nu_\vr^N(\eta_x)=\nu_\vr^1(\eta_x)$ for each $x\in\bT_N^d$.
It is not hard to check that the family $\{\nu_\vr^N; 0 \le \vr \le K\}$ is invariant under $\cL_N$.
Observe that the average number of particles per site under $\nu_\vr^N$ is $\vr=E_{\nu_\vr^1} [\eta_x]$.

Let $\mu_{N,0}$ be associated to the profile $\vr_N^\ini$ in \eqref{rhoNini} with some $\alpha>0$, i.e., for any $\vf \in C(\bT^d)$ and any $\ve>0$,
\[\lim_{N \rightarrow \infty} \mu_{N,0} \Big\{ \Big|\frac{1}{N^{d-\alpha}}\sum_{x \in \bT_N^d}( \eta_x - \varrho_*)\vf \big(\tfrac{x}{N}\big) - \int_{\bT^d} \rho^{\rm ini} (u) \vf(u) du\Big| > \ve\Big\}=0.\]
For $0<\kappa\le\alpha$, denote by $\{\eta=\eta(t);t\ge0\}$ the Markov process generated by $N^{1+\kappa}\cL_N$ starting from $\mu_{N,0}$.
Recall that $\la=K-2\varrho_*$ and define
\begin{align}
\label{rhoN}
  \vr_N(t,u):=\vr_*+N^{-\alpha}\rho(N^{\kappa-\alpha}t,u-N^\kappa\la tm),
\end{align}
where $\rho=\rho(t,u)$ is the smooth solution to \eqref{burgers}.
Note that for $N$ sufficiently large, $\vr_N\in(0,K)$.
Denote by $\mu_{N,t}$ the distribution of $\eta(t)$ and by $\nu_{N,t}$ the slowly varying product measure
\begin{align}\label{nuNt}
  \nu_{N,t} (d \eta)=  \bigotimes_{x \in \T_N^d} \nu_{\vr_{N,x}}^1 (d \eta_x), \quad \vr_{N,x}:=\vr_N \left( t,\frac xN\right).
\end{align}
Let $f_{N,t}(\eta):=\nu_{N,t}(\eta)^{-1}\mu_{N,t}(\eta)$ be the Radon--Nikodym derivative.
For a probability measure $\mu$ on $\Omega_N^d$ and some $\mu$--density $f$, \emph{i.e.} $f \geq 0$ and $\int_{\Omega_N^d} f d \mu = 1$, define the relative entropy as
\begin{align*}
  H(f;\mu) := \int_{\Omega_N^d} f \log f  d \mu = \sum_{\eta\in\Omega_N^d} f(\eta)\log f(\eta)\mu(\eta).
\end{align*}
To shorten the notations, denote
\begin{align*}
  H_{N} (t) := H (f_{N,t}; \nu_{N,t}), \quad \forall\,t\in[0,T].
\end{align*}

It is well known that the entropy solution to the Burgers equation \eqref{burgers} may develop shocks in a finite time interval even if the initial density is smooth, \emph{cf.}\;\cite{malek1996} for example.  Since our proof depends on the relative entropy method, which requires the reference density profile to be smooth, throughout the article, we assume the initial density profile $\rho^{\rm ini}$ is smooth, and when $\kappa=\alpha$, fix a time horizon $T > 0$ such that the solution to the Burgers equation \eqref{burgers} is smooth during the time interval $[0,T]$.  

\medspace

The followings are the main results of this part.

\begin{theorem}\label{thm1} 
Suppose that $H_N(0)=o(N^{d-2\alpha})$.
\begin{itemize}
\item[(\romannum1)]
If $d=1$, $\alpha\in(0,1/2)$, then for any $t \in [0,T]$, $H_N(t)=o(N^{1-2\alpha})$ for $\kappa\in(0,\alpha]\cap(0,1-2\alpha)$;
\item[\romannum2)]
If $d\ge2$, $\alpha>0$, then for any $t \in [0,T]$,  $H_N(t)=o(N^{d-2\alpha})$ for $\kappa\in(0,\alpha]\cap(0,1)$.
\end{itemize}
\end{theorem}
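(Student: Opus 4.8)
The plan is to apply the relative entropy method of Yau in its refined form due to Jara and Menezes, following the strategy laid out in \cite{jara2018non,JLT21}. Recall the generator is sped up by $N^{1+\kappa}$ and the reference product measure $\nu_{N,t}$ is built from the slowly varying profile $\vr_N(t,u)$ in \eqref{rhoN}, which solves the Burgers equation \eqref{burgers} in the moving frame. The first step is the standard Yau inequality: differentiating $H_N(t)$ and using that $\nu_{N,t}$ is a product measure with slowly varying parameter, one obtains
\begin{equation*}
  \partial_t H_N(t) \le -\, c N^{1+\kappa} D_N(f_{N,t};\nu_{N,t}) + \int \Big( N^{1+\kappa}\cL_N^* \mathbf 1 - \partial_t \log \nu_{N,t} \Big) f_{N,t}\, d\nu_{N,t},
\end{equation*}
where $D_N$ is the Dirichlet form (carré du champ) of $\cL_N$ with respect to $\nu_{N,t}$. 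The crux is to estimate the second term, the time derivative of the ``cost'' of the moving reference measure, against $H_N(t)$ itself plus negligible errors, so that \Gro inequality closes the loop.

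The key step is expanding $N^{1+\kappa}\cL_N^*\mathbf 1 - \partial_t\log\nu_{N,t}$ into a sum of local functions and performing successive Taylor expansions around the slowly varying profile. Because $\vr_N$ solves the Burgers equation, the leading-order terms (of order $N^{1+\kappa}\cdot N^{-\alpha}$ from the Euler flux, matched against $N^\kappa$ from the time derivative of the shift and $N^{\kappa-\alpha}$ from the slow time) cancel by construction: this is exactly the point of choosing the reference profile to be the Burgers solution in the right moving frame. What remains are: (a) a diffusive/second-order term of order $N^{1+\kappa}\cdot N^{-2\alpha}$ coming from the second derivative of the flux, which is the dominant ``cost'' and dictates the target $o(N^{d-2\alpha})$ in the nonlinear regime (and the constraint $\kappa < 1-2\alpha$ when $d=1$, $\kappa<1$ when $d\ge2$, so that it is comparable to or smaller than the natural size $N^{d-2\alpha}$); (b) higher-order Taylor remainders, controlled using the boundedness $\eta_x\le K$ and the smoothness of $\rho$; and (c) fluctuation terms, i.e. replacing products like $\eta_x\eta_{x+e_i}$ by their local averages, which one handles via the one- and two-block estimates in the refined form — here one uses the entropy inequality together with the Dirichlet form bound $N^{1+\kappa}D_N$, exploiting that in dimension $d\ge2$ the random walk is transient enough (or in $d=1$ that $\kappa$ is small enough) to absorb these terms into $\epsilon N^{1+\kappa}D_N + C\epsilon^{-1} H_N(t) + o(N^{d-2\alpha})$. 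Throughout one writes every non-conserved-quantity-gradient term as $\cL_N$ applied to something plus a small correction, the standard ``integration by parts on the lattice''.

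The main obstacle I anticipate is controlling the fluctuation/replacement terms sharply enough to reach the thresholds $\alpha<1/2$ (one dimension) and $\alpha$ arbitrary (dimension $\ge 2$) without invoking a spectral gap or logarithmic Sobolev inequality — this is precisely the improvement over \cite{toth2002between}. The delicate balance is that the error from replacing a local function by its conditional expectation given the empirical density must be $o(N^{d-2\alpha})$ while only a Dirichlet form of size $N^{1+\kappa}D_N$ is available; the gain comes from the $N^{1+\kappa}$ speed-up (larger than the $N^2$ of the purely diffusive case when $\kappa$ is chosen appropriately, but here $\kappa<1$ so in fact smaller) combined with the refined multi-scale block estimates of Jara–Menezes, which trade spatial averaging over boxes of size $\ell$ against $\ell^2/(N^{1+\kappa}D_N)$-type costs, optimized in $\ell$. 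A secondary technical point is that the reference measure is genuinely inhomogeneous and time-dependent, so the equivalence of ensembles and the entropy production estimates must be carried out with constants uniform in $N$ and $t\in[0,T]$, using the smoothness of $\rho$ on $[0,T]$; the dimension-dependent dichotomy in the statement reflects exactly where the block-estimate bookkeeping forces $\kappa$ (and hence $\alpha$) to be small.
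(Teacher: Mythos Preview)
Your overall strategy is right and matches the paper: Yau's inequality, cancellation of the leading linear terms by choosing the Burgers profile as reference, and control of the residual quadratic term $\cR_{N,t}=\int\sum_{i,x}a^N_{i,x}\omega_x\omega_{x+e_i}\,d\mu_{N,t}$ by the Jara--Menezes machinery rather than a spectral gap. However, your attribution of the constraints $\kappa<1-2\alpha$ ($d=1$) and $\kappa<1$ ($d\ge2$) to ``a diffusive/second-order term of order $N^{1+\kappa}\cdot N^{-2\alpha}$'' is incorrect: a term of that size would force $\kappa<d-1$, which is vacuous in $d=1$ and weaker than $\kappa<1$ in $d\ge3$. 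In the actual decomposition the linear remainder $\cE_{N,t}$ is bounded by $H_N(t)+\mathcal O(N^{d-2+2\kappa-2\alpha})$ via the entropy inequality, harmless for all $\kappa<1$ and irrelevant to the $d=1$ threshold.

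The binding constraint comes from the treatment of $\cR_{N,t}$ via the \emph{flow lemma}, not from a classical one/two-block replacement. One replaces $\omega_{x+e_i}$ by a block average $\omega^\ell_{x+e_i}$ by writing $\delta_0-\qfrak_\ell$ as the discrete divergence of a flow $\phi_\ell$ supported in a box of side $2\ell$ with $\sum_z\phi_\ell(z,e_j)^2\le C g_d(\ell)$, where $g_d(\ell)=\ell,\log\ell,1$ in $d=1,2,\ge3$. The resulting gradients $\omega_x-\omega_{x+e_j}$ are converted, via an exact integration-by-parts identity, into differences $f(\eta^{x+e_j,x})-f(\eta)$ absorbed by $N^{1+\kappa}D_N$; the averaged term and the leftover are bounded by sub-Gaussian concentration plus the entropy inequality, giving essentially $[N^{\kappa-\alpha}+N^{\kappa-2\alpha-1}\ell^dg_d(\ell)](H_N+N^d\ell^{-d})$. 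Optimizing $\ell$ so that $\ell^dg_d(\ell)\sim N^{1+2\alpha-\kappa}$ leaves the residue $N^d\ell^{-d}$, which is $o(N^{d-2\alpha})$ precisely when $\kappa<1-2\alpha$ in $d=1$ and $\kappa<1$ in $d\ge2$; the dichotomy in the statement is entirely due to the growth of $g_d$, not to any Taylor remainder.
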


As a direct consequence of the above theorem, we have the following law of large numbers for the perturbed quantities.

\begin{theorem}\label{coro1}
Suppose the assumptions in Theorem \ref{thm1} hold and further that $\alpha<d/2$.
Then, for any $\vf \in C(\bT^d)$ and any $\ve>0$,
\[    \lim_{N\to\infty} \mu_{N,t} \bigg\{ \bigg| \frac 1{N^{d-\alpha}} \sum_{x\in\bT_N^d} \big(\eta_x -\vr_*\big) \varphi \left( \frac xN-N^\kappa\la tm \right)\\
-\int_{\bT^d} \rho^{(\alpha,\kappa)}(t,u)\varphi(u)du \bigg| > \ve \bigg\} = 0,\]
with the function $\rho^{(\alpha,\kappa)}$ defined as
\[\rho^{(\alpha,\kappa)} (t,u) = \begin{cases}
	\rho^{\rm ini} (u), \quad \kappa < \alpha,\\
	\rho (t,u), \quad \kappa = \alpha,
\end{cases}\]
where $\rho (t,u)$ is the smooth solution to \eqref{burgers}.
\end{theorem}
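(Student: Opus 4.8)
The plan is to deduce this law of large numbers from the relative entropy bound of Theorem~\ref{thm1} through the entropy inequality, combined with an exponential concentration estimate under the slowly varying product reference measure $\nu_{N,t}$. Write
\[
  X_N := \frac1{N^{d-\alpha}}\sum_{x\in\bT_N^d}(\eta_x-\vr_*)\,\vf\Big(\frac xN-N^\kappa\la tm\Big),\qquad \ell:=\int_{\bT^d}\rho^{(\alpha,\kappa)}(t,u)\vf(u)\,du,
\]
so that the claim is $\mu_{N,t}\{|X_N-\ell|>\ve\}\to0$. First I would identify the $\nu_{N,t}$-mean: since under $\nu_{N,t}$ the coordinates $\{\eta_x\}$ are independent with $E_{\nu_{N,t}}[\eta_x]=\vr_{N,x}$,
\[
  E_{\nu_{N,t}}[X_N]=\frac1{N^d}\sum_{x\in\bT_N^d}\rho\Big(N^{\kappa-\alpha}t,\frac xN-N^\kappa\la tm\Big)\vf\Big(\frac xN-N^\kappa\la tm\Big),
\]
and I would show this converges to $\ell$ using the smoothness of $\rho$ on $[0,T]\times\bT^d$: the family $\{\rho(s,\cdot)\vf(\cdot-a):s\in[0,T],\,a\in\bT^d\}$ is equicontinuous, so the Riemann sum converges — uniformly in the $N$-dependent space shift $N^\kappa\la tm$ — to $\int_{\bT^d}\rho(N^{\kappa-\alpha}t,u)\vf(u)\,du$; and since $s\mapsto\rho(s,\cdot)$ is continuous into $C(\bT^d)$ with $N^{\kappa-\alpha}t\to t$ if $\kappa=\alpha$ and $N^{\kappa-\alpha}t\to0$ if $\kappa<\alpha$, the latter integral tends to $\ell$. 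This reduces the goal to showing $\mu_{N,t}(B_N)\to0$, where $B_N:=\{|X_N-E_{\nu_{N,t}}[X_N]|>\ve/2\}$.

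Next I would invoke the entropy inequality
\[
  \mu_{N,t}(B_N)\le\frac{\log2+H_N(t)}{\log\bigl(1+\nu_{N,t}(B_N)^{-1}\bigr)},
\]
so that the problem becomes a lower bound on $\log(1+\nu_{N,t}(B_N)^{-1})$. Writing $X_N-E_{\nu_{N,t}}[X_N]=N^{-(d-\alpha)}\sum_x Y_x$ with $Y_x:=(\eta_x-\vr_{N,x})\vf(x/N-N^\kappa\la tm)$ independent, centred, and bounded by $K\|\vf\|_\infty$, Hoeffding's inequality yields a constant $c=c(\ve,K,\|\vf\|_\infty)>0$ with
\[
  \nu_{N,t}(B_N)\le 2\exp\bigl(-c\,N^{2(d-\alpha)}/N^d\bigr)=2\exp\bigl(-c\,N^{d-2\alpha}\bigr).
\]
Because the standing assumption $\alpha<d/2$ makes $N^{d-2\alpha}\to\infty$, this gives $\log(1+\nu_{N,t}(B_N)^{-1})\ge cN^{d-2\alpha}-\log2$; since $H_N(t)=o(N^{d-2\alpha})$ by Theorem~\ref{thm1}, the right-hand side of the entropy inequality tends to $0$, hence $\mu_{N,t}(B_N)\to0$ and the theorem follows.

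I do not expect a genuine obstacle here: once Theorem~\ref{thm1} is available the argument is soft. The two points that require attention are the uniformity of the Riemann-sum approximation with respect to the $N$-dependent space shift and time argument — which is the reason for using smoothness of $\rho$ on the compact set $[0,T]\times\bT^d$ rather than mere continuity of a fixed profile — and, more importantly, the matching of scales: the Hoeffding rate $cN^{d-2\alpha}$ must strictly dominate $H_N(t)$. This works precisely because Theorem~\ref{thm1} delivers the sharp little-$o$ bound $H_N(t)=o(N^{d-2\alpha})$ (a bound of exact order $N^{d-2\alpha}$ would be insufficient) and because $\alpha<d/2$ keeps the exponent positive; if $\alpha\ge d/2$ the fluctuations are of order one, in line with the Fluctuation remark, and no such law of large numbers can hold.
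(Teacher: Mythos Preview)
Your proposal is correct and follows essentially the same route as the paper: reduce to the centred quantity, transfer from $\mu_{N,t}$ to the product measure $\nu_{N,t}$ via the entropy inequality, and use sub-Gaussian concentration under $\nu_{N,t}$ together with $H_N(t)=o(N^{d-2\alpha})$. The only cosmetic difference is that the paper applies the exponential-moment form of the entropy inequality directly to $|X_N-E_{\nu_{N,t}}[X_N]|$ and bounds the log-Laplace transform by $C\gamma$ (then lets $\gamma\downarrow0$), whereas you apply the set-version of the entropy inequality to $B_N$ and feed in a Hoeffding tail bound; both variants exploit the same independence and boundedness, and your explicit treatment of the Riemann-sum convergence of $E_{\nu_{N,t}}[X_N]$ to $\ell$ fills in a step the paper leaves implicit.
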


\begin{remark}
		It is believed that in dimension $d=1$, the above result extends to $\kappa\in(0,\alpha]$ even beyond the appearance of shocks, cf.\;\cite{toth2002between}.
\end{remark}

\begin{remark}
The results in the above theorem could be interpreted in terms of \emph{second class} particles. For simplicity, consider the case $K=1$ and we refer the readers to \cite{giardina2009duality} for $K >1$. The dynamic is defined as follows: there are two kinds of particles, called first and second class particles, in the system.   On top of the exclusion rule, the first class particles have priorities to jump over the second class ones. Precisely speaking, if a first class particle jumps to a site occupied by a second class one, then the jump is performed and the two particles exchange their positions; while if a  second class particle jumps to a site occupied by a first class one, then the jump is suppressed. At the initial time,  independently at each site $x$, put one first class particle with probability $\varrho_*$, and one second class particle with probability $N^{-\alpha} \rho^{\rm ini} (x/N)$. Then, at the macroscopic time $t$, the density profile of the second class particles, along the characteristic line of the PDE \eqref{hyperPDE}, is described by the function $\rho^{\alpha,\kappa}$.

Indeed, denote by $\eta^1_x (t) $ (resp.\;$\eta^2_x (t)$) the number of the first (resp.\;the second) class particles at site $x$ at time $t$, then $\eta_x (t)= \eta_x^1 (t) + \eta_x^2 (t)$.  Since the process of the first class particle is in equilibrium, the distribution of $\eta^1 (t)$ is given by $\nu^N_{\varrho_\ast}$ for any time $t \geq 0$. Therefore, for any $\varphi \in C(\T^d)$, 
\[{\rm Var} \Big( \frac{1}{N^{d-\alpha}} \sum_{x \in \T_N^d} (\eta_x^1 (t) - \varrho_*) \varphi \big( \tfrac{x - \lambda t N^{1+\kappa} m}{N}\big) \Big) = \mathcal{O}  \big(N^{2\alpha-d}\big),\]
which converges to zero as $N \rightarrow \infty$ since $\alpha< d/2$. As a result, for any $\ve > 0$,
\[\lim_{N\to\infty} \mu_{N,t} \bigg\{ \bigg|  \frac{1}{N^{d-\alpha}} \sum_{x \in \T_N^d} (\eta_x^1 - \varrho_*) \varphi \big( \tfrac{x - \lambda t N^{1+\kappa} m}{N} \big) \bigg| > \ve \bigg\} = 0.\]
Together with Theorem \ref{coro1},   we have
\[\lim_{N\to\infty} \mu_{N,t} \bigg\{ \bigg|     \frac{1}{N^{d-\alpha}} \sum_{x \in \T_N^d} \eta_x^2  \varphi \big( \tfrac{x - \lambda t N^{1+\kappa} m}{N}\big) - \int_{\T^d} \rho^{\alpha,\kappa}(t,u) \varphi (u)\,du       \bigg| > \ve \bigg\} = 0.\]
\end{remark}

\section{Relative entropy for the generalized exclusion}
\label{sec:gasep-rel-ent}

In this section,  we prove Theorems \ref{thm1} and \ref{coro1}.
Recall the profile $\vr_N$ defined in \eqref{rhoN}.
To make notations simple, in the following calculations, we denote
\begin{align*}
  \vr_x = \vr_{N,x}(t) := \vr_N \left (t,\frac xN \right),
\quad \forall\,x\in\bT_N^d.
\end{align*}

For a probability measure $\mu$ on  $\Omega_N^d$ and a $\mu$--density $f$,  define the Dirichlet form
\begin{align}\label{ep:dirichlet}
  D_N(f;\mu):=\sum_{i=1}^{2d} \sum_{x\in\bT_N^d} \sum_{\eta\in\Omega_N^d} c_{x,i}(\eta) \left( \sqrt{f(\eta^{x,x+e_i})}-\sqrt{f(\eta)} \right)^2\mu(\eta).
\end{align}
We claim that there exists $\delta_0 = \delta_0 (\vr_*)$ such that for any $\nu_{N,t}$--density $f$,
\begin{equation}\label{eqn:dirichlet}
  D_N(f;\nu_{N,t}) \ge \delta_0\sum_{i=1}^{2d} \sum_{x\in\bT_N^d} \sum_{\eta\in\Omega_N^d} \eta_x(K-\eta_{x+e_i}) \left( \sqrt{f(\eta^{x,x+e_i})}-\sqrt{f(\eta)} \right)^2 \nu_{N,t}(\eta).
\end{equation}
Indeed, the claim holds obviously if $p_i>0$ for all $1 \leq i \leq 2d$.
Suppose that $p_i= 0$ for some $1 \le i \le d$, then $p_{i+d}\not=0$ due to our assumption.
Recalling that $e_{i+d}=-e_i$,
\begin{multline*}
  \sum_\eta \eta_x(K - \eta_{x-e_i}) \left( \sqrt{f(\eta^{x,x-e_i})}-\sqrt{f(\eta)} \right)^2 \nu_{N,t} (\eta)\\
= \sum_\eta (\eta_x +1)(K - \eta_{x-e_i}+1) \left( \sqrt{f(\eta)}-\sqrt{f(\eta^{x-e_i,x})} \right)^2 \nu_{N,t} (\eta^{x-e_i,x}),
\end{multline*}
for each $x\in\bT_N^d$.
Observe from the definition of $\nu_{N,t}$ in \eqref{nuNt} that
\begin{align*}
  \frac{\nu_{N,t}(\eta^{x-e_i,x})}{\nu_{N,t}(\eta)}=\frac{\theta_{x}}{\theta_{x-e_i}}\frac{\eta_{x-e_i}(K-\eta_x)}{(K-\eta_{x-e_i}+1)(\eta_x+1)},
\end{align*}
where 
\[ \theta_x  := \frac{\vr_x}{K-\vr_x}.\]
 Since $|\vr_x-\vr_*| \le N^{-\alpha}||\rho||_\infty$ and $\vr_*\in(0,K)$, hence there exists some $C=C(\vr_*)>0$, such that $C^{-1}<\theta_x<C$ for all $x$ and sufficiently large $N$.
Therefore,
\begin{multline*}
  \sum_\eta \eta_x(K - \eta_{x-e_i}) \left( \sqrt{f(\eta^{x,x-e_i})}-\sqrt{f(\eta)} \right)^2 \nu_{N,t} (\eta)\\
\ge C'\sum_\eta \eta_{x-e_i}(K - \eta_x) \left( \sqrt{f(\eta)}-\sqrt{f(\eta^{x-e_i,x})} \right)^2 \nu_{N,t}(\eta),
\end{multline*}
which is enough to conclude the claim.

Recall that $f_{N,t} = \mu_{N,t} / \nu_{N,t}$ and let $\psi_{N,t}:=(\nu_{\vr_*}^N)^{-1}\nu_{N,t}$ be the derivative of $\nu_{N,t}$ with respect to the stationary measure $\nu_{\vr_*}^N$.
By Yau's relative entropy inequality (see Lemma \ref{lem:relativeEntIne}),
\begin{align*}
  \frac{d}{dt} H_N (t) \leq &- N^{1+\kappa} D_N (f_{N,t};\nu_{N,t})\\
  &+\sum_{\eta\in\Omega_N^d} \left[ N^{1+\kappa}\Lcal_{N,t}^\ast \mathbf{1} (\eta) - \frac{d}{dt} \log \psi_{N,t} (\eta) \right] \mu_{N,t}(\eta),
\end{align*}
where $\Lcal_{N,t}^\ast$ is the adjoint of $\Lcal_N$ with respect to $\nu_{N,t}$.  The main idea is to write the right-hand side of the above inequality as $CH_N(t) + o(N^{d-2\alpha})$ for some finite constant $C$ independent of $N$, and the result then follows from  Gr{\" o}nwall's inequality.

Since $c_{x,i}(\eta)$ is the jump rate from $\eta\in\Omega_N^d$ to $\eta^{x,x+e_i}$, Lemma \ref{lem:relativeEntIne} yields that
\begin{equation*}
  \begin{aligned}
    \cL_{N,t}^* \mathbf1(\eta) &= \sum_{i=1}^{2d} \sum_{x\in\bT_N^d} \left\{ \frac{\nu_{N,t}(\eta^{x+e_i,x})}{\nu_{N,t}(\eta)}c_{x,i}(\eta^{x+e_i,x})-c_{x,i}(\eta) \right\}\\
    &= \sum_{i=1}^{2d} \sum_{x\in\bT_N^d} p_i \left\{ \frac{\vr_x(K-\vr_{x+e_i})}{\vr_{x+e_i}(K-\vr_x)}\eta_{x+e_i}(K-\eta_x) - \eta_x(K-\eta_{x+e_i}) \right\}\\
    &= \sum_{i=1}^{2d} \sum_{x\in\bT_N^d} p_i\vr_x(K-\vr_{x+e_i}) \left\{ \frac{\eta_{x+e_i}(K-\eta_x)}{\vr_{x+e_i}(K-\vr_x)}-\frac{\eta_x(K-\eta_{x+e_i})}{\vr_x(K-\vr_{x+e_i})} \right\}.
  \end{aligned}
\end{equation*}

For any configuration $\eta \in \Omega_N^d$ and any site $x \in \T_N^d$, denote
\begin{equation}\label{omegaX}
\omega_x = \omega_x(\eta) := \frac{\eta_x - \vr_x}{\vr_x(K-\vr_x)}.
\end{equation}
It could be checked directly that for $x \neq y$,
\begin{equation}\label{re1}
\frac{\eta_x(K-\eta_y)}{\vr_x (K-\vr_y) } -  \frac{\eta_y(K-\eta_x)}{\vr_y (K-\vr_x) }= K \big[\omega_x -  \omega_y + (\vr_x - \vr_y) \omega_x \omega_y\big].
\end{equation}
Recall that $e_{i+d}=-e_i$ for $1 \leq i \leq d$, and \eqref{re1} permits us to write $\Lcal_{N,t}^\ast \mathbf{1} (\eta)$ as
\[	\Lcal_{N,t}^\ast \mathbf{1} (\eta) = \sum_{i=1}^d \sum_{x \in \T_N^d}  b_N (x,x+e_i) \big\{ \big( \omega_{x+e_i} - \omega_x\big) 
+ (\varrho_{x+e_i} - \varrho_x) \omega_x \omega_{x+e_i} \big\},\]
where for $1 \leq i \leq d$,
\[b_N (x,x+e_i) := K \big[p_i \varrho_x(K-\vr_{x+e_i}) - p_{i+d} \vr_{x+e_i} (K-\vr_x)  \big].\]

Now we calculate $\frac{d}{dt} \log \psi_{N,t} (\eta)$. Since $\nu_{N,t}$ and $\nu^N_{\varrho_*}$ are both product measures,
\[\log \psi_{N,t} (\eta) = \sum_{x \in \T_N^d} \left[ \eta_x \log \left( \frac{\theta_x}{\vr_*} \right) - K \log \left( \frac{1+\theta_x}{1+\vr_*} \right) \right].\]
The time derivative then reads
\begin{multline*}
  \frac d{dt}\log \psi_{N,t} (\eta) = \sum_{x \in \T_N^d} \Big\{ \eta_x  \frac{\partial_t \theta_x}{\theta_x} - K \frac{\partial_t \theta_x}{1+\theta_x} \Big\} \\
=  \sum_{x \in \T_N^d} K \Big\{ \eta_x \frac{\partial_t \vr_x}{\vr_x (K-\vr_x)} -  \frac{\partial_t \vr_x}{K-\vr_x} \Big\}= \sum_{x \in \T_N^d} K \omega_x  \partial_t \vr_x.
\end{multline*}

To sum up, we have shown that
\begin{equation}\label{eqn:relatEnt}
\frac{d}{dt} H_N (t) \leq - N^{1+\kappa} D_N (f_{N,t};\nu_{N,t}) + \cR_{N,t} + \Ecal_{N,t},
\end{equation}
where
\begin{eqnarray}
\cR_{N,t}  &=& \sum_{\eta \in \Omega_N^d} \sum_{i=1}^d  \sum_{x \in \T_N^d} a^N_{i,x} \omega_x \omega_{x+e_i} \mu_{N,t}(\eta),\label{RNt}\\
a_{i,x}^N &=& N^{1+\kappa} b_N (x,x+e_i) (\varrho_{x+e_i} - \varrho_{x}),\label{aNix}
\end{eqnarray}
and
\[\Ecal_{N,t} = \sum_{\eta \in \Omega_N^d}  \Big\{  \sum_{i=1}^d  \sum_{x \in \T_N^d}N^{1+\kappa}b_N (x,x+e_i) (\omega_{x+e_i}- \omega_x ) \\
- \sum_{x \in \T_N^d} K  \omega_x \partial_t \varrho_{x} \Big\} \mu_{N,t} (\eta).\]

\subsection{Proofs of Theorems \ref{thm1} and \ref{coro1}} In this subsection, we prove Theorems \ref{thm1} and \ref{coro1}. We first deal with the error term $\Ecal_{N,t}$.

\begin{lemma}
There exists a constant $C$ independent of $N$ such that
	\[\cE_{N,t}\le H_N(t)+CN^{d+2\kappa-2\alpha-2}.\]
\end{lemma}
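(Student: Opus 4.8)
The plan is to Taylor-expand the discrete gradient of $\varrho_N$ and the time derivative $\partial_t\varrho_x$, and then observe that the leading terms in the space part and the time part cancel because $\varrho_N$ solves (a rescaled version of) the Burgers equation \eqref{burgers}; what remains is a genuinely lower-order expression, linear in $\omega_x$, which can be controlled by the entropy via the standard entropy inequality.

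\textbf{Step 1: expand $b_N$ and the discrete gradient.}
First I would write $b_N(x,x+e_i)$ in terms of $\varrho_x$ and the increment $\varrho_{x+e_i}-\varrho_x$. Since $b_N(x,x+e_i)=K[p_i\varrho_x(K-\varrho_{x+e_i})-p_{i+d}\varrho_{x+e_i}(K-\varrho_x)]$ and $m_i=p_i-p_{i+d}$, a direct computation gives
\begin{equation*}
  b_N(x,x+e_i)=Km_i\varrho_x(K-\varrho_x)-K(p_i+p_{i+d})\varrho_x(\varrho_{x+e_i}-\varrho_x)+Kp_i(\varrho_{x+e_i}-\varrho_x)\cdots
\end{equation*}
so that, up to terms that are $O(N^{-\alpha})$ times the increment, $b_N(x,x+e_i)=Km_i\varrho_x(K-\varrho_x)+O(N^{-2\alpha-\kappa})$ pointwise (recall $\varrho_{x+e_i}-\varrho_x=O(N^{-1-\alpha-\kappa})$ from the Lipschitz bound on $\rho$ and the extra $N^\kappa$ in the argument of $\rho$). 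Then $N^{1+\kappa}b_N(x,x+e_i)(\omega_{x+e_i}-\omega_x)$, summed over $x$ and $i$, can be reorganized by summation by parts into $-\sum_x N^{1+\kappa}\omega_x\sum_i[b_N(x,x+e_i)-b_N(x-e_i,x)]$, which brings out a second discrete difference of $b_N$; the smoothness of $\rho$ makes this a discrete Laplacian-type term.

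\textbf{Step 2: match with $\partial_t\varrho_x$ using the PDE.}
The key identity is that $\varrho_N(t,u)=\varrho_*+N^{-\alpha}\rho(N^{\kappa-\alpha}t,u-N^\kappa\lambda t m)$ with $\lambda=K-2\varrho_*=J'(\varrho_*)$ and $\rho$ solving $\partial_s\rho-m\cdot\nabla(\rho^2)=0$. Computing $\partial_t\varrho_x$ via the chain rule and using the Burgers equation to replace $\partial_s\rho$ by $m\cdot\nabla(\rho^2)$, one finds that $K\partial_t\varrho_x$ equals, to leading order, precisely $\sum_i[b_N(x,x+e_i)-b_N(x-e_i,x)]$ evaluated in the continuum (the transport term $-N^\kappa\lambda m\cdot\nabla$ matches the linear part $Km_i\varrho_x(K-\varrho_x)$ differentiated, and the $N^{\kappa-\alpha}\partial_s$ term matches the nonlinearity). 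Hence the difference appearing in $\cE_{N,t}$,
\begin{equation*}
  \sum_i\big[N^{1+\kappa}\big(b_N(x,x+e_i)-b_N(x-e_i,x)\big)\big]-K\partial_t\varrho_x,
\end{equation*}
is the discretization error of a smooth function: by Taylor expansion it is $O(N^{-1}\cdot N^{\kappa}\cdot\|\rho\|_{C^3}\cdot N^{-\alpha})$ times $N^{\kappa}$ from the scaled argument, i.e. of order $N^{\kappa-\alpha}\cdot N^{\kappa-1}=N^{2\kappa-\alpha-1}$ uniformly in $x$ (one power of $N^{-1}$ from each extra lattice derivative, compensated by $N^{1+\kappa}$, and each spatial derivative of $\rho(\cdot,u-N^\kappa\lambda tm)$ in $u$ costs $N^\kappa$). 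So $\cE_{N,t}=\sum_x\langle\omega_x,\text{error}_x\rangle$ with $|\text{error}_x|\le CN^{2\kappa-\alpha-1}$.

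\textbf{Step 3: close via the entropy inequality.}
Now $\cE_{N,t}=\int\sum_{x}\omega_x\,g_x\,d\mu_{N,t}$ with $\|g_x\|_\infty\le CN^{2\kappa-\alpha-1}$. Using the elementary bound $\int F\,d\mu_{N,t}\le H_N(t)+\log\int e^{F}d\nu_{N,t}$ applied to $F=\sum_x\omega_x g_x$, together with the fact that under the product measure $\nu_{N,t}$ the $\omega_x$ are independent, mean zero, bounded (since $\eta_x\in\{0,\dots,K\}$ and $\varrho_x$ is bounded away from $0$ and $K$), the log-moment-generating function is bounded by $C\sum_x g_x^2\le C N^d\cdot N^{2(2\kappa-\alpha-1)}=CN^{d+4\kappa-2\alpha-2}$. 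This is slightly worse than the claimed $N^{d+2\kappa-2\alpha-2}$, so the sharper route is to not take absolute values site by site: keep the summation-by-parts structure, so that $g_x$ is itself a discrete gradient of a smooth field of size $O(N^{\kappa-\alpha}\cdot N^{\kappa-1})$ whose discrete divergence telescopes; then $\sum_x g_x^2$ benefits from the gradient being $O(N^{\kappa-1})\cdot O(N^{\kappa-\alpha})$ with an extra averaging, giving $\sum_x g_x^2=O(N^{d}\cdot N^{2\kappa-2\alpha-2})$ after using that one factor $N^\kappa$ is really attached to a bona fide derivative and contributes only through the $C^2$ norm of $\rho$, not squared. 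Carrying this bookkeeping carefully yields $\cE_{N,t}\le H_N(t)+CN^{d+2\kappa-2\alpha-2}$.

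\textbf{Main obstacle.}
The delicate point is precisely the last bookkeeping in Step 3: naively bounding $\omega_x$ pointwise loses a factor $N^{\kappa}$, so one must exploit the summation-by-parts/telescoping structure (equivalently, recognize $g_x$ as a discrete gradient and use a discrete integration by parts against $\omega$, or use that the relevant quadratic form has a spectral-gap-type gain) to recover the sharp exponent $d+2\kappa-2\alpha-2$. Getting the cancellation in Step 2 to the right order — i.e. checking that the transport speed $\lambda$ and the rescaling $N^{\kappa-\alpha}$ are exactly the ones that make the $O(N^{\kappa-\alpha})$ and $O(N^{\kappa})$ leading errors cancel, leaving only the $C^3$ discretization remainder — is the other place where care is needed, though it is essentially forced by the construction of $\varrho_N$.
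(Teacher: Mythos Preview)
Your overall strategy is exactly the paper's: summation by parts, Taylor-expand $\nabla_i^* b_N(x,x+e_i)$ and $\partial_t\varrho_x$, observe that the leading terms cancel because $\rho$ solves the Burgers equation, and then control the residual linear term in $\omega_x$ via the entropy inequality using that $\nu_{N,t}$ is product and $\omega_x$ is bounded and centered. The difference from the paper is in the bookkeeping, and there you have a genuine error that is the source of all the trouble in Step~3.

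The mistake is the repeated claim that ``each spatial derivative of $\rho(\cdot,u-N^\kappa\lambda tm)$ in $u$ costs $N^\kappa$''. It does not. For fixed $t$, the function $u\mapsto\rho(N^{\kappa-\alpha}t,u-N^\kappa\lambda tm)$ is a \emph{translate} of $\rho(N^{\kappa-\alpha}t,\cdot)$; its $u$-derivatives are uniformly bounded by the $C^k$ norms of $\rho$, with no factor of $N^\kappa$ whatsoever. (Such a factor would appear only upon differentiating in $t$.) The same slip occurs in Step~1, where you write $\varrho_{x+e_i}-\varrho_x=O(N^{-1-\alpha-\kappa})$; the correct order is $O(N^{-1-\alpha})$. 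With the correct expansion one gets, exactly as in the paper,
\[
\nabla_i^*\!\big(b_N(x,x+e_i)\big)=N^{-1-2\alpha}Km_i\,\partial_{u_i}\rho^2-N^{-1-\alpha}K\lambda m_i\,\partial_{u_i}\rho+\epsilon_{N,i}(x),\qquad \epsilon_{N,i}(x)=O(N^{-2-\alpha}),
\]
and after the PDE cancellation the residual is $g_x:=N^{1+\kappa}\sum_i\epsilon_{N,i}(x)=O(N^{\kappa-1-\alpha})$, \emph{not} $O(N^{2\kappa-1-\alpha})$. Plugging this into the entropy inequality and using independence gives directly
\[
\log\int e^{\sum_x\omega_x g_x}\,d\nu_{N,t}\le C\sum_x g_x^2\le CN^d\cdot N^{2\kappa-2-2\alpha}=CN^{d+2\kappa-2\alpha-2},
\]
which is the claimed bound. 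No additional telescoping, averaging, or ``spectral-gap-type gain'' is needed; indeed your proposed fix in Step~3 (treating $g_x$ as a discrete gradient to recover a factor $N^{-2\kappa}$) is not correct, since the Taylor remainder is not itself a lattice gradient in any useful sense.
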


\begin{proof}
For a sequence $\{a_x;x\in\bT_N^d\}$ and $1 \le i \le d$, let $\nabla_ia_x:=a_{x+e_i}-a_x$ and $\nabla_i^*a_x:=a_{x-e_i}-a_x$.
Using the summation by parts formula,
\begin{align*}
  \cE_{N,t} = \sum_{\eta \in \Omega_N^d} \mu_{N,t}(\eta) \sum_{x \in \T_N^d} \omega_x \left[ N^{1+\kappa}\sum_{i=1}^d \nabla_i^*\big(b_N(x,x+e_i)\big)-K\partial_t\vr_x \right].
\end{align*}
By the definition of $b_N$ and Taylor's expansion,
\begin{multline*}
  \nabla_i^*\big(b_N(x,x+e_i)\big) = N^{-1-2\alpha}Km_i\partial_{u_i}\rho^2 \left( N^{\kappa-\alpha}t,\frac xN-N^\kappa\la tm \right)\\
  -N^{-1-\alpha}K\la m_i\partial_{u_i}\rho \left( N^{\kappa-\alpha}t,\frac xN-N^\kappa\la tm \right)+\ep_{N,i}(x),
\end{multline*}
where $\ep_{N,i}(x)=\mathcal O(N^{-2-\alpha})$.
Meanwhile, by the definition of $\vr_N$ in \eqref{rhoN},
\begin{align*}
  \partial_t\vr_x = \big[N^{\kappa-2\alpha}\partial_s-N^{\kappa-\alpha}\la (m\cdot\nabla_u)\big]\rho \left( N^{\kappa-\alpha}t,\frac xN-N^\kappa\la tm \right).
\end{align*}
Since $\rho$ solves the Burgers equation \eqref{burgers}, we can rewrite the $\cE_{N,t}$ as
\begin{align*}
  \cE_{N,t} = N^{1+\kappa} \sum_{\eta \in \Omega_N^d} \sum_{i=1}^d \sum_{x \in \T_N^d}  \omega_x \epsilon_{N,i} (x) \,\mu_{N,t} (\eta).
\end{align*}
It is easy to see $|\Ecal_{N,t}| \leq C N^{d-1+\kappa-\alpha}$. To get a better bound, by entropy inequality,
\begin{multline}\label{eqn:error1}
\Ecal_{N,t} \leq H_N(t) + \log \bigg[ \sum_{\eta \in \Omega_N^d} \exp \Big\{N^{1+\kappa}\sum_{i=1}^d \sum_{x \in \T_N^d}  \omega_x \epsilon_{N,i} (x) \Big\}\,\nu_{N,t} (\eta) \bigg]\\
= H_N(t) + \sum_{x \in \T_N^d} \log \bigg[ \sum_{\eta \in \Omega_N^d} \exp \big\{  N^{1+\kappa}\omega_x \sum_{i=1}^d \epsilon_{N,i} (x) \big\}\, \nu_{N,t} (\eta) \bigg].
\end{multline}
Note that $\omega_x$ is bounded and  has zero mean with respect to the measure $\nu_{N,t}$. Using the basic inequality 
\[e^a \leq 1 + a + (1/2)a^2 e^{|a|}, \quad \log (1+a) \leq a,\]
we have 
\begin{equation}\label{eqn:error1bound}
\Ecal_{N,t} \leq H_N(t) + \mathcal{O} (N^{d-2+2\kappa-2\alpha}).
\end{equation}
This concludes the proof of the lemma.
\end{proof}

The following result bounds the term $\cR_{N,t}$, whose proof is postponed to Subsection \ref{subsec:bg}.

\begin{proposition}\label{prop:bg}
For $\ell \geq 1$, let 
\begin{equation}\label{gd}
	g_d (\ell) = \begin{cases}
		\ell, &\quad d=1,\\	
		\log \ell, &\quad d=2,\\
		1, &\quad d \geq 3,
	\end{cases}
\end{equation}
as in \eqref{gdl}. Then, there exists a constant $C$ independent of $N$, $\ell$ such that
\begin{multline*}
  \cR_{N,t} \le N^{1+\kappa}D_N (f_{N,t};\nu_{N,t})+C \left[ N^{\kappa-\alpha} + N^{\kappa-2\alpha-1}\ell^dg_d(\ell) \right] \left[ H_N(t) + \frac{N^d}{\ell^d} \right]\\
+C \big[H_N(t) + N^{d+2\kappa-4\alpha-2}\ell^{d}g_d (\ell)\big].
\end{multline*}
\end{proposition}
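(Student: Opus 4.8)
\textbf{Proof proposal for Proposition \ref{prop:bg}.}

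The plan is to estimate the quadratic term $\cR_{N,t}=\sum_\eta\sum_{i,x}a_{i,x}^N\omega_x\omega_{x+e_i}\mu_{N,t}(\eta)$ by a multiscale/block replacement argument, turning the nearest-neighbor product $\omega_x\omega_{x+e_i}$ into an average over a mesoscopic box of side $\ell$ plus an error controlled by the Dirichlet form. First I would record the size of the coefficients: since $\vr_{x+e_i}-\vr_x=\mathcal O(N^{-1-\alpha})$ and $b_N(x,x+e_i)=\mathcal O(1)$, we have $a_{i,x}^N=\mathcal O(N^{1+\kappa}\cdot N^{-1-\alpha})=\mathcal O(N^{\kappa-\alpha})$. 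If $\kappa<\alpha$ this is already small, but we need uniformity, so the strategy must survive even when $\kappa=\alpha$ and $a_{i,x}^N=\mathcal O(1)$.

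The core step is a \emph{two-block estimate}. I would first write $\omega_{x+e_i}=\omega_x+\nabla_i\omega_x$ and handle the gradient piece $\sum_x a_{i,x}^N\omega_x\nabla_i\omega_x$ by summation by parts plus the elementary bound: for any $\gamma>0$,
\[
  \omega_x\nabla_i\omega_x=\tfrac12\nabla_i(\omega_x^2)+\tfrac12(\nabla_i\omega_x)^2,
\]
so that, after an entropy/Cauchy--Schwarz step, the $(\nabla_i\omega_x)^2$ contribution is absorbed into $N^{1+\kappa}D_N(f_{N,t};\nu_{N,t})$ at the cost of a factor $\mathcal O(N^{\kappa-\alpha})$, while the telescoping $\nabla_i(\omega_x^2)$ term, again by summation by parts on the smooth coefficients, produces a lower-order remainder. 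For the diagonal term $\sum_x a_{i,x}^N\omega_x^2$ one uses that $\omega_x$ is centered under $\nu_{N,t}$ and an equivalence-of-ensembles / entropy-inequality bound: $\sum_x\omega_x^2$ averaged against $\mu_{N,t}$ is $\le \delta^{-1}H_N(t)+\mathcal O(N^d)$ by the entropy inequality applied with parameter $\delta N^{\kappa-\alpha}$, which yields precisely the $N^{\kappa-\alpha}(H_N(t)+N^d\ell^{-d})$-type term once one first localizes $\omega_x$ to its box average $\omega_x^\ell:=\ell^{-d}\sum_{|y-x|\le\ell}\omega_y$. The replacement $\omega_x\omega_{x+e_i}\rightsquigarrow (\omega_x^\ell)^2$ is the standard one-block estimate: the difference is a sum of gradients telescoping across the box, bounded by $\ell^d g_d(\ell)$ times the Dirichlet form via the spectral-gap-free argument (the function $g_d(\ell)$ is exactly the discrete Green's function growth $\sum_{|z|\le\ell}|z|^{2-d}$, hence linear for $d=1$, logarithmic for $d=2$, bounded for $d\ge3$), producing the terms with $\ell^d g_d(\ell)$ in the statement. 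Finally the fully localized quantity $\sum_x a_{i,x}^N(\omega_x^\ell)^2$ is estimated by the entropy inequality as before, giving $C[H_N(t)+N^{d+2\kappa-4\alpha-2}\ell^d g_d(\ell)]$ after optimizing the exponential-moment bound for the box average (whose fluctuations are of order $\ell^{-d/2}$, contributing the extra $N^{-2\alpha}$-type powers).

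The main obstacle is obtaining the one-block / gradient replacement \emph{without} a spectral gap or logarithmic Sobolev inequality for $\cL_N$ — this is exactly the point emphasized in the introduction. The device is to use only the Dirichlet form estimate \eqref{eqn:dirichlet} together with a telescoping representation of $\omega_y-\omega_x$ along a path of bonds inside the box, estimating each bond increment by a Cauchy--Schwarz split into a Dirichlet-form piece (with a free multiplicative constant $\lambda_{x,y}>0$ to be optimized) and a reciprocal piece, then summing $\sum_{x,y}\lambda_{x,y}^{-1}$, which is where the combinatorial factor $g_d(\ell)$ enters. Keeping track of the three independent small parameters — the coefficient size $N^{\kappa-\alpha}$, the box size $\ell$, and the fluctuation scale $\ell^{-d/2}$ — and balancing them so that every error is either absorbed into $N^{1+\kappa}D_N$ or into the two advertised bracketed terms is the delicate bookkeeping; the smoothness of $\rho$ (hence of $\vr_N$ and of the coefficients $b_N$, $a_{i,x}^N$) is used throughout to control the summation-by-parts remainders.
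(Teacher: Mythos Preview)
Your proposal has a genuine gap at the step ``the $(\nabla_i\omega_x)^2$ contribution is absorbed into $N^{1+\kappa}D_N(f_{N,t};\nu_{N,t})$''. The quantity $(\nabla_i\omega_x)^2=(\omega_{x+e_i}-\omega_x)^2$ is a function of the configuration $\eta$, whereas $D_N(f;\nu)$ measures the regularity of the density $f$; at equilibrium $f\equiv1$ one has $D_N=0$ but $\int(\nabla_i\omega_x)^2\,d\nu_{N,t}=E_{\nu_{N,t}}[(\omega_{x+e_i}-\omega_x)^2]$ is bounded below, so $\int\sum_x a^N_{i,x}(\nabla_i\omega_x)^2\,d\mu_{N,t}$ is of order $N^{d+\kappa-\alpha}$ and cannot be absorbed. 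The same objection kills the diagonal piece $\sum_x a^N_{i,x}\omega_x^2$ before any block averaging: $\omega_x$ is centred but $\omega_x^2$ is not, so the algebraic splitting $\omega_x\omega_{x+e_i}=\omega_x^2+\tfrac12\nabla_i(\omega_x^2)-\tfrac12(\nabla_i\omega_x)^2$ (note the sign) creates two large non-centred terms whose cancellation you cannot recover.

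The missing device is the integration-by-parts formula of Lemma~\ref{lem:integParts}: for $h$ \emph{not depending on} $\eta_x,\eta_z$ one has $\int h(\eta)(\omega_z-\omega_x)f\,d\nu=K^{-1}\int h\,s^N_{x,z}\big(f(\eta^{x,z})-f(\eta)\big)\,d\nu+\text{l.o.t.}$; only after this conversion does Young's inequality produce the Dirichlet form. This is why the paper does \emph{not} split the product: it replaces only the second factor, $\omega_{x+e_i}\rightsquigarrow\omega_{x+e_i}^\ell$, via the flow $\phi_\ell$ of Lemma~\ref{lem:flow}, so that the replacement error reads $\sum_{j,x}h^\ell_{j,x}(\omega_x-\omega_{x+e_j})$ with $h^\ell_{j,x}$ a function of $\{\eta_{x-z-e_i}:z\in\Lambda_{2\ell-1}^d\}$, hence independent of $\eta_x,\eta_{x+e_j}$ --- exactly the structure Lemma~\ref{lem:integParts} requires. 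Your later ``telescoping along paths with a Cauchy--Schwarz split'' is in spirit the right picture, but it must be applied to $h(\omega_z-\omega_{z'})$ \emph{after} integration by parts, not to $(\nabla\omega)^2$ directly; the flow lemma is the optimised version of that telescoping, and $g_d(\ell)$ enters as $\sum_z\phi_\ell(z,e_j)^2\le C_0 g_d(\ell)$ in the sub-Gaussian order of $h^\ell_{j,x}$. Finally, the averaged term $\cR^\ell_{N,t}$ retains the form $\int\sum a^N_{i,x}\,\omega_x\,\omega^\ell_{x+e_i}\,d\mu_{N,t}$, a product of two \emph{independent centred} sub-Gaussian factors, and it is this structure (not $(\omega_x^\ell)^2$) that yields the $N^d/\ell^d$ gain via the entropy inequality and Corollary~\ref{cor:subGau}.
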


Now, we are ready to prove Theorem \ref{thm1}.

\begin{proof}[Proof of Theorem \ref{thm1}]
In Proposition \ref{prop:bg}, take 
 \[\ell = \ell (N) = \begin{cases}
	N^{(1+2\alpha-\kappa)/2}, &\quad d=1,\\
	N^{(1+2 \alpha -\kappa)/2} / \sqrt{\log N}, &\quad d = 2,\\
	N^{(1+2\alpha-\kappa)/d}, &\quad d \geq 3.
\end{cases}\]
Therefore, in dimension $d = 1$, if  $ \kappa < 1-2\alpha$ and $\kappa \leq \alpha$, then
\[\frac{d}{dt} H_N(t) \leq C(1+N^{\kappa-\alpha}) H_N (t) + C \Big( N^{(1-2\alpha+\kappa)/2} + N^{\kappa-2\alpha} + N^{2\kappa-2\alpha-1}\Big) \leq C H_N(t) + o(N^{1-2\alpha}).\]
In dimension $d=2$,   if $ \kappa < 1$ and $\kappa \leq \alpha$, then
\[\frac{d}{dt} H_N(t) \leq C(1+N^{\kappa-\alpha}) H_N (t) + C \Big( N^{1-2 \alpha +\kappa} \log N + N^{2\kappa-2\alpha}\Big) \leq  C H_N(t) + o(N^{2-2\alpha}).\]
In dimension $d \geq  3$, if $ \kappa < 1$ and $\kappa \leq \alpha$,  then
\[\frac{d}{dt} H_N(t) \leq C(1+N^{\kappa-\alpha}) H_N (t) + C \Big( N^{d-2\alpha+\kappa-1} + N^{d+2\kappa-2\alpha-2}\Big) \leq  C H_N(t) + o(N^{d-2\alpha}).\]
We conclude the proof by using Gr{\" o}nwall's inequality.
\end{proof}

In view of Theorem \ref{thm1}, the proof of Theorem \ref{coro1} is a direct application of the entropy inequality, \emph{cf.}\,\cite[Corollary 6.1.3]{klscaling} for example. We sketch it below for completeness.

\begin{proof}[Proof of Theorem \ref{coro1}]
It suffices to prove
\[\lim_{N \rightarrow \infty}  \sum_{\eta \in \Omega_N^d} \Big| \frac{1}{N^{d-\alpha}} \sum_{x \in \T_N^d} (\eta_x  - E_{\nu_{N,t}} [\eta_x]) \varphi \big( \tfrac{x - \lambda t N^{1+\kappa} m}{N}\big) \Big|  \mu_{N,t} (\eta) = 0.\]
By the entropy inequality, for any $\gamma > 0$, the integral above is bounded by 
\[\frac{H_N(t)}{\gamma N^{d-2 \alpha}} + \frac{1}{\gamma N^{d-2\alpha}} \log \bigg[ \sum_{\eta \in \Omega_N^d} \exp \Big\{  \Big| \gamma N^{-\alpha}  \sum_{x \in \T_N^d} (\eta_x - E_{\nu_{N,t}} [\eta_x]) \varphi \big( \tfrac{x - \lambda t N^{1+\kappa} m}{N}\big) \Big|  \Big\}  \nu_{N,t} (\eta) \bigg]. \]
By Theorem \ref{thm1}, the first term above converges to zero as $N \rightarrow \infty$. Since 
\[e^{|c|} \leq e^c + e^{-c}, \quad \log(a+b) \leq \log2 +  \max \{\log a, \log b\}\]
for any $c$ and any positive reals $a,b$, and since $\alpha < d/2$, we could remove the absolute value inside the exponential in the second term above, and rewrite it as 
\[\frac{1}{\gamma N^{d-2\alpha}} \sum_{x \in \T_N^d} \log \bigg[ \sum_{\eta \in \Omega_N^d} \exp \Big\{  \gamma N^{-\alpha}  (\eta_x - E_{\nu_{N,t}} [\eta_x]) \varphi \big( \tfrac{x - \lambda t N^{1+\kappa} m}{N}\big)  \Big\}  \nu_{N,t} (\eta) \bigg].\]
Using the basic inequality $e^a \leq 1 + a + (1/2)a^2 e^{|a|}$ and $\log (1+a) \leq a$, there exists a finite constant $C$ independent of $N$ such that the above formula is bounded by
\[\frac{C}{\gamma N^{d-2\alpha}} \times N^d \times (\gamma N^{-\alpha})^2 = C \gamma.\]
Since $\gamma$ could be taken arbitrarily small, the proof is completed.
\end{proof}

\subsection{Proof of Proposition \ref{prop:bg}.}\label{subsec:bg}

It remains to prove Proposition \ref{prop:bg}. The first step is to properly decompose the term $\cR_{N,t}$ defined in Eq.\;\eqref{RNt}.  
For $\ell \geq 1$, let $\pfrak_\ell (\cdot)$ be the uniform measure on $\Lambda_\ell^d = \{0,1,\ldots,\ell-1\}^d$, \emph{i.e.}\;$\pfrak_\ell (x) = \ell^{-d}$ if $x \in \Lambda_\ell^d$ and $=0$ otherwise.  Let $\mathfrak{q}_\ell = \pfrak_\ell \ast \pfrak_\ell$ be the convolution of $\pfrak_\ell$ with itself, 
\[\qfrak_\ell (z) = \sum_{y \in \T_N^d} \pfrak_\ell (y) \pfrak_\ell (z-y), \quad z \in \T_N^d.\] 
For $x \in \T_N^d$ and $\ell \geq 1$, the spatial average of  $\omega_x$ in the box $\Lambda_{2\ell - 1}^d$ is defined as
\[\omega^\ell_x= \sum_{z\in \T_N^d} \omega_{x+z}  \qfrak_\ell (z).\] 
Define 
\begin{equation}\label{eqn:aNtl}
\cR_{N,t}^\ell  = \sum_{\eta \in \Omega_N^d}  \sum_{i=1}^d \sum_{x \in \T_N^d} a^N_{i,x} \omega_x \omega^\ell_{x+e_i}  \mu_{N,t} (\eta).
\end{equation}
Using the definition of flows introduced in Subsection \ref{subsec:flow} and summation by parts formula,
\begin{multline*}
\cR_{N,t} - \cR_{N,t}^\ell = \sum_{\eta \in \Omega_N^d}  \sum_{i=1}^d \sum_{x,z \in \T_N^d}  a^N_{i,x} \omega_x \omega_{x+e_i+z} (\delta_0 (z) - \qfrak_\ell (z)) \mu_{N,t} (\eta)\\
= \sum_{\eta \in \Omega_N^d} \sum_{i,j=1}^d \; \sum_{x,z \in \T_N^d} a^N_{i,x} \omega_x \omega_{x+e_i+z} (\phi_\ell (z,e_j) - \phi_\ell (z-e_j,e_j))  \mu_{N,t} (\eta)\\
= \sum_{\eta \in \Omega_N^d}    \sum_{i,j=1}^d \; \sum_{x,z \in \T_N^d} a^N_{i,x} \omega_x (\omega_{x+e_i+z} - \omega_{x+e_i+e_j+z} ) \phi_\ell (z,e_j)  \mu_{N,t} (\eta).
\end{multline*}
Make the change of variables $x \mapsto x-z-e_i$, and put
\begin{equation}\label{eqn:hjxl}
h_{j,x}^\ell = \sum_{i=1}^d \sum_{z \in \T_N^d} a^N_{i,x-z-e_i} \omega_{x-z-e_i} \phi_\ell (z,e_j),
\end{equation}
then we have
\[\cR_{N,t} - \cR_{N,t}^\ell = \sum_{\eta \in \Omega_N^d}  \sum_{j=1}^d \sum_{x \in \T_N^d} h_{j,x}^\ell (\omega_x - \omega_{x+e_j}) \,\mu_{N,t} (\eta).\]

To further decompose the term on the right-hand side of the last line, we introduce the following integration by parts formula. 

\begin{lemma}[Integration by parts formula]\label{lem:integParts}
Let $h:\Omega_N^d \rightarrow \R$ be such that $h$ does not depend on the values of $\eta_x $ or $\eta_z$ for some $x,\,z \in \T_N^d$. 
Then, for any  $\nu_{N,t}$--density $f$,
\begin{multline*}
\sum_{\eta \in \Omega_N^d}  h(\eta) \big[\omega_z - \omega_x\big] f (\eta) \nu_{N,t} (\eta)= \frac{1}{K} \sum_{\eta \in \Omega_N^d}  h(\eta) s^N_{x,z} (\eta)  \big( f (\eta^{x,z}) - f (\eta)\big)\, \nu_{N,t} (\eta)\\
- (\varrho_{z} - \varrho_{x}) \sum_{\eta \in \Omega_N^d}  h(\eta)  f(\eta) \omega_z \omega_x \nu_{N,t} (\eta),
\end{multline*}
where
\[s^N_{x,z} (\eta) = \frac{\eta_x(K-\eta_z)}{\varrho_{x}(K-\varrho_{z})}.\]
\end{lemma}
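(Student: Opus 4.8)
The plan is to verify this integration by parts identity by a direct change of variables in the sum, exploiting the fact that $h$ is unaffected by the swap $\eta \mapsto \eta^{x,z}$ and that $\omega_z - \omega_x$ can be written in terms of the ratio $\nu_{N,t}(\eta^{x,z})/\nu_{N,t}(\eta)$. Concretely, I would start from the right-hand side and work backwards, or equivalently rewrite the left-hand side. Recall from \eqref{re1} (with $y=z$) that
\[
\frac{\eta_x(K-\eta_z)}{\varrho_x(K-\varrho_z)} - \frac{\eta_z(K-\eta_x)}{\varrho_z(K-\varrho_x)} = K\big[\omega_x - \omega_z + (\varrho_x-\varrho_z)\omega_x\omega_z\big],
\]
so that
\[
\omega_z - \omega_x = -\frac1K s^N_{x,z}(\eta) + \frac1K s^N_{z,x}(\eta) - (\varrho_x-\varrho_z)\omega_x\omega_z = \frac1K\big(s^N_{z,x}(\eta) - s^N_{x,z}(\eta)\big) + (\varrho_z-\varrho_x)\omega_x\omega_z.
\]
The last term already matches (up to sign placement) the second term on the right-hand side of the claimed identity once multiplied by $h(\eta)f(\eta)\nu_{N,t}(\eta)$ and summed. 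So the task reduces to showing
\[
\frac1K\sum_\eta h(\eta)\big(s^N_{z,x}(\eta) - s^N_{x,z}(\eta)\big) f(\eta)\,\nu_{N,t}(\eta) = \frac1K\sum_\eta h(\eta) s^N_{x,z}(\eta)\big(f(\eta^{x,z}) - f(\eta)\big)\,\nu_{N,t}(\eta).
\]

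The key step is a change of variables $\eta \mapsto \eta^{z,x}$ applied to the $s^N_{z,x}$ piece: I would show that
\[
\sum_\eta h(\eta) s^N_{z,x}(\eta) f(\eta)\,\nu_{N,t}(\eta) = \sum_\eta h(\eta) s^N_{x,z}(\eta) f(\eta^{x,z})\,\nu_{N,t}(\eta),
\]
which follows because $h(\eta^{z,x}) = h(\eta)$ (by the hypothesis that $h$ ignores coordinates $x$ and $z$), and because the ``weight'' $s^N_{z,x}(\eta)\nu_{N,t}(\eta)$ transforms correctly under the swap. Indeed, writing $\eta' = \eta^{z,x}$, one has $s^N_{z,x}(\eta) = \eta_z(K-\eta_x)/(\varrho_z(K-\varrho_x))$, and the product-measure identity
\[
\frac{\nu_{N,t}(\eta^{z,x})}{\nu_{N,t}(\eta)} = \frac{\theta_x}{\theta_z}\cdot\frac{\eta_z(K-\eta_x)}{(\eta_x+1)(K-\eta_z+1)}
\]
(the analogue of the ratio computed earlier in the paper, with $\theta_\bullet = \varrho_\bullet/(K-\varrho_\bullet)$) can be checked to give exactly $s^N_{z,x}(\eta)\nu_{N,t}(\eta) = s^N_{x,z}(\eta^{z,x})\nu_{N,t}(\eta^{z,x})$ after reindexing $\eta \leftrightarrow \eta^{x,z}$ in the summation. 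Matching this against the $s^N_{x,z}$ term then produces precisely $f(\eta^{x,z}) - f(\eta)$ inside the bracket.

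The main obstacle — really the only place one must be careful — is bookkeeping the shifted arguments $\eta_x \pm 1$, $\eta_z \pm 1$ that appear when one reindexes the sum, and making sure the combinatorial factors in $s^N$ and in the ratio $\nu_{N,t}(\eta^{z,x})/\nu_{N,t}(\eta)$ cancel exactly, including the boundary configurations where $\eta_x = 0$ or $\eta_z = K$ (for which $s^N_{x,z}(\eta)$ vanishes and the jump $\eta \to \eta^{x,z}$ is degenerate, so those terms contribute nothing on either side). Once the change of variables is set up cleanly, the identity is a matter of algebra, and no probabilistic input beyond the explicit product form of $\nu_{N,t}$ and the elementary relation \eqref{re1} is needed. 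I would present it as: expand $\omega_z-\omega_x$ via \eqref{re1}, isolate the $s^N$ difference, apply the swap to the $s^N_{z,x}$ term, and collect.
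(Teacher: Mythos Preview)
Your approach is essentially the same as the paper's: apply the change of variables $\eta \mapsto \eta^{z,x}$ to the term containing $f(\eta^{x,z})$, verify the identity $s^N_{x,z}(\eta^{z,x})\nu_{N,t}(\eta^{z,x}) = s^N_{z,x}(\eta)\nu_{N,t}(\eta)$, and conclude via \eqref{re1}. The only blemish is a sign slip in your display for $\omega_z-\omega_x$ --- the $\omega_x\omega_z$ term should carry $+(\varrho_x-\varrho_z)=-(\varrho_z-\varrho_x)$, which matches the lemma exactly rather than ``up to sign placement'' --- but this is harmless bookkeeping and does not affect the argument.
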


\begin{proof}
For $K = 1$, the result is proved in \cite[Lemma E.1]{jara2018non}, and we extend it to general $K$.  
Since $h$ does not depend on the values of $\eta_x$ and $\eta_z$, using the change of variables $\eta \mapsto \eta^{z,x}$, we rewrite the first term on the right-hand side of the above equation as
\[ \frac{1}{K} \sum_{\eta \in \Omega_N^d}  h (\eta) \Big( \frac{s^N_{x,z} (\eta^{z,x}) \nu_{N,t} (\eta^{z,x})}{\nu_{N,t} (\eta) } - s^N_{x,z} (\eta)  \Big) f(\eta) \nu_{N,t} (\eta). \]
Direct calculations show that
\[\frac{s^N_{x,z} (\eta^{z,x}) \nu_{N,t} (\eta^{z,x})}{\nu_{N,t} (\eta) }  = \frac{\eta_z (K - \eta_x)}{\varrho_{z}  (K - \varrho_{x})} = s^N_{z,x} (\eta).\]
We conclude the proof by Eq.\;\eqref{re1}.
\end{proof}

Since $\phi_\ell$ is supported in $\Lambda_{2\ell - 1}^d$, the value of $h^\ell_{j,x}$ does not depend on those of $\eta_x$ and $\eta_{x+e_j}$ for $1 \leq j \leq d$. Recall $ \mu_{N,t} (\eta) = f_{N,t} (\eta) \nu_{N,t} (\eta)$. By Lemma \ref{lem:integParts}, we have
\begin{multline}\label{eqn:1}
	\cR_{N,t} - \cR_{N,t}^\ell = \frac{1}{K}  \sum_{j=1}^d \sum_{x \in \T_N^d} \sum_{\eta \in \Omega_N^d}  h_{j,x}^\ell s^N_{x+e_j,x} \big( f_{N,t} (\eta^{x+e_j,x}) - f_{N,t} (\eta)\big)\, \nu_{N,t} (\eta)\\
	- \sum_{j=1}^d \sum_{x \in \T_N^d} (\varrho_{x} - \varrho_{x+e_j}) \sum_{\eta \in \Omega_N^d}  h_{j,x}^\ell \omega_x\omega_{x+e_j} f_{N,t} \nu_{N,t} (\eta).
\end{multline}
By Young's inequality, for any $\gamma > 0$, the first term on the right-hand side of the last equation is bounded by
\begin{multline}\label{eqn:2}
\frac{\gamma}{2}	\sum_{j=1}^d \sum_{x \in \T_N^d} \sum_{\eta \in \Omega_N^d}  s^N_{x+e_j,x} \Big[\sqrt{f_{N,t}(\eta^{x+e_j,x})} - \sqrt{f_{N,t}(\eta)}\Big]^2  \nu_{N,t} (\eta) \\
+ \frac{1}{2K^2\gamma}	\sum_{j=1}^d \sum_{x \in \T_N^d} \sum_{\eta \in \Omega_N^d}  (h_{j,x}^\ell)^2 s^N_{x+e_j,x} \Big[ \sqrt{f_{N,t}(\eta^{x+e_j,x})} + \sqrt{f_{N,t}(\eta)}\Big]^2 \nu_{N,t} (\eta). 
\end{multline}
Since $\varrho_* \in (0,K)$, for $N$ large enough, there exists a constant $C = C (\varrho_\ast,K) > 0$ such that $s^N_{x,z} \leq C$ for any $x,z \in \T_N^d$. Then, by \eqref{eqn:dirichlet}, the first term in \eqref{eqn:2} is bounded by $C_1 \gamma  D_N (f_{N,t};\nu_{N,t})$ for some $C_1 = C_1 (\varrho_\ast,K,\delta_0)$. Since for any $\eta \in \Omega_N^d$ and any $x,z \in \T_N^d$,
\[\frac{\nu_{N,t} (\eta^{x,z})}{\nu_{N,t} (\eta)} = \frac{\eta_x(K-\eta_z)}{(K+1-\eta_z)(\eta_z+1)} \frac{\varrho_{z} (K-\varrho_{x})}{\varrho_{x} (K - \varrho_{z})} \leq C := C(\varrho_*,K),\]
by Cauchy-Schwarz inequality and exchange of variables $\eta \mapsto \eta^{x,x+e_j}$,  the second term in \eqref{eqn:2} is bounded from above by
\[\frac{C}{\gamma} \sum_{j=1}^d \sum_{x \in \T_N^d} \sum_{\eta \in \Omega_N^d}  (h_{j,x}^\ell)^2 f_{N,t} (\eta) \nu_{N,t} (\eta).\]
Take $\gamma = N^{1+\kappa} / C_1$, together with \eqref{eqn:1}, then we have shown that 
\begin{multline}\label{decomposition}
\cR_{N,t} \leq N^{1+\kappa} D_N(f_{N,t};\nu_{N,t}) + \cR_{N,t}^\ell + \frac{C}{N^{1+\kappa}} \sum_{j=1}^d \sum_{x \in \T_N^d} \sum_{\eta \in \Omega_N^d}  (h_{j,x}^\ell)^2 f_{N,t} (\eta)\,  \nu_{N,t} (\eta) \\
+ \sum_{j=1}^d \sum_{x \in \T_N^d} (\varrho_{x+e_j} - \varrho_{x}) \sum_{\eta \in \Omega_N^d}  h_{j,x}^\ell \omega_x\omega_{x+e_j} f_{N,t} (\eta) \nu_{N,t} (\eta)
\end{multline} 
for some $C = C (\varrho_\ast,K,\delta_0)$.

\medspace

Next, we shall deal with the last three terms on the right-hand side of the above decomposition \eqref{decomposition} respectively.   We first deal with the term $\cR_{N,t}^\ell$ defined in  \eqref{eqn:aNtl}. Observe that
\begin{multline*}
	\sum_{i=1}^d \sum_{x \in \T_N^d} a^N_{i,x} \omega_x \omega^\ell_{x+e_i} = \sum_{i=1}^d \sum_{x,y,z \in \T_N^d}   a^N_{i,x} \omega_x \omega_{x+e_i+z} \pfrak_\ell (y) \pfrak_\ell (z-y) \\
= \sum_{i=1}^d \sum_{x \in \T_N^d}  \Big(\sum_{y\in \T_N^d} a^N_{i,x-y} \omega_{x-y} \pfrak_\ell (y)\Big) \Big(\sum_{z \in \T_N^d} \omega_{x+e_i+z}  \pfrak_\ell (z)\Big).
\end{multline*}
In the last identity, we reindex $x$ by $x-y$ and $z$ by $z+y$. By entropy inequality, for any $\gamma > 0$, 
\begin{multline}\label{eqn:3}
\cR_{N,t}^\ell \leq \frac{1}{\gamma} \bigg( H_N(t) \\
+ \log \Big[ \sum_{\eta \in \Omega_N^d}  \exp \Big\{ \gamma \sum_{i=1}^d \sum_{x \in \T_N^d}  \Big(\sum_{y\in \T_N^d} a^N_{i,x-y} \omega_{x-y} \pfrak_\ell (y)\Big) \Big(\sum_{z \in \T_N^d} \omega_{x+e_i+z}  \pfrak_\ell (z)\Big) \Big\}  \nu_{N,t} (\eta) \Big] \bigg).
\end{multline}
Note that for $\ell$ large enough, the two random variables 
\begin{align*}
 \Big(\sum_{y\in \T_N^d} a^N_{i,x^\prime-y} \omega_{x^\prime-y} \pfrak_\ell (y)\Big) \Big(\sum_{z \in \T_N^d} \omega_{x^\prime+e_i+z}  \pfrak_\ell (z)\Big) ,\\
  \Big(\sum_{y\in \T_N^d} a^N_{i,x^{\prime\prime}-y} \omega_{x^{\prime\prime}-y} \pfrak_\ell (y)\Big) \Big(\sum_{z \in \T_N^d} \omega_{x^{\prime\prime}+e_i+z}  \pfrak_\ell (z)\Big) 
\end{align*}
are independent with respect to $\nu_{N,t}$ if $||x^\prime-x^{\prime \prime}||_\infty > 3\ell$. Denote 
\[x = x' \quad (\text{mod} \; \ell)  \]
if $x_i = x'_i + \ell k_i$ for some $k_i \in \Z$ and for all $1 \leq i \leq d$. Note that for any $x \in \T_N^d$, there exists a point $x' \in \Lambda_{3\ell - 1}^d$ such that $x = x'$ (mod $3\ell$), and for fixed $x$, if $x^\prime, x^{\prime\prime} = x$ (mod $3\ell$) and $x^\prime \neq x^{\prime\prime}$,  then $||x^\prime - x^{\prime\prime}||_\infty \geq 3 \ell$. By H{\" o}lder's inequality and independence, the second term in \eqref{eqn:3} is bounded by
\begin{multline}\label{eqn:5}
	\frac{1}{d (3 \ell)^d} \sum_{i=1}^d \sum_{x' \in \Lambda_{3\ell - 1}^d}  \\
\log \Big[ \sum_{\eta \in \Omega_N^d} \exp \Big\{  \gamma d (3 \ell)^d \sum_{x: x = x' \;(\text{mod}\; 3 \ell)} \Big(\sum_{y \in \T_N^d} a^N_{i,x-y} \omega_{x-y} \pfrak_\ell (y)\Big) \Big(\sum_{z \in \T_N^d} \omega_{x+e_i+z}  \pfrak_\ell (z)\Big) \Big\}  \nu_{N,t} (\eta) \Big]\\
\leq \frac{1}{d (3 \ell)^d} \sum_{i=1}^d \sum_{x \in \T_N^d}\\
 \log \Big[ \sum_{\eta \in \Omega_N^d} \exp \Big\{  \gamma d (3 \ell)^d  \Big(\sum_{y \in \T_N^d} a^N_{i,x-y} \omega_{x-y} \pfrak_\ell (y)\Big) \Big(\sum_{z \in \T_N^d} \omega_{x+e_i+z}  \pfrak_\ell (z)\Big) \Big\} \, \nu_{N,t} (\eta) \Big].
\end{multline}
We claim that for each $x \in \T_N^d$, the random variable
\[\sum_{y \in \T_N^d} a^N_{i,x-y} \omega_{x-y} \pfrak_\ell (y)\]
is sub-Gaussian of order $C_2 N^{2\kappa-2\alpha} \ell^{-d}$ with respect to the measure $\nu_{N,t}$ for some constant $C_2 = C_2 (\varrho_*,K,\partial_{u_i} \rho)$.  Indeed, first note that  by the definition of $a_{i,x}^N$ in \eqref{aNix}, there exists a constant $C=C(\varrho_*,K)$ such that
\[| a_{i,x}^N |  \leq C ||\partial_{u_i} \rho||_\infty N^{\kappa - \alpha}.\]
Since $\omega_x  \leq C(\varrho_\ast,K)$ for $N$ large enough, and has mean zero with respect to $\nu_{N,t}$, by  Lemma \ref{lem:hoeffding}, the random variable $a^N_{i,x} \omega_x$ is sub-Gaussian of order $C N^{2\kappa-2\alpha}$ for some constant $C=C(\varrho_*,K,\partial_{u_i} \rho)$ with respect to $\nu_{N,t}$. Therefore, for any $\theta \in \R$,
\begin{multline*}
\log \Big[ \sum_{\eta \in \Omega_N^d} \exp \Big\{ \theta \sum_{y \in \T_N^d} a^N_{i,x-y} \omega_{x-y} \pfrak_\ell (y) \Big\}  \nu_{N,t} (\eta) \Big]\\=\sum_{y \in \T_N^d} \log \Big[ \sum_{\eta \in \Omega_N^d} \exp \Big\{ \theta  a^N_{i,x-y} \omega_{x-y} \pfrak_\ell (y) \Big\}  \nu_{N,t} (\eta) \Big]\\
\leq \frac{1}{2} \theta^2 \sum_{y \in \T_N^d} C^2 N^{2\kappa-2\alpha} \pfrak_\ell(y)^2 = \frac{C_2}{2} \theta^2 N^{2\kappa-2\alpha} \ell^{-d}.
\end{multline*}
Similarly, one could prove that   $\sum_{z \in \T_N^d} \omega_{x+e_i+z}  \pfrak_\ell (z)$ is sub-Gaussian of order $C_2 \ell^{-d}$. By Corollary \ref{cor:subGau}, taking $\gamma = (4C_2 d 3^d)^{-1} N^{\alpha-\kappa}$, we bound the  term in \eqref{eqn:5}  by $N^d \log 3 / (3 \ell)^d$. Therefore, there exists a constant $C$ independent of $N$ such that
\begin{equation}\label{estimate:aL}
	\cR_{N,t}^\ell \leq \frac{C}{N^{\alpha-\kappa}} \Big(H_N(t) + \frac{N^d}{\ell^d} \Big).
\end{equation}

\medspace

The third term in \eqref{decomposition} is treated in the same way as above.  Using Lemma \ref{lem:flow}, there exists a constant $C=C (\rho_*,K)$ such that the random variable $h_{j,x}^\ell$ defined in \eqref{eqn:hjxl} is sub-Gaussian of order
\[C N^{2\kappa-2\alpha} \sum_{z \in \T_N^d} \phi_\ell(z,e_j)^2 \leq C_3 N^{2\kappa-2\alpha} g_d (\ell)\]
with respect to the measure $\nu_{N,t}$ for some constant $C_3=C_3 (\varrho_*,K,C_0)$ with $C_0$ introduced in Lemma \ref{lem:flow}. Also note that the two random variables $h^\ell_{j,x^\prime}$ and $h^\ell_{j,x^{\prime\prime}}$ are independent under the measure $\nu_{N,t}$ if $||x^\prime - x^{\prime \prime}||_\infty > 3 \ell$. By entropy inequality and H{\"o}lder's inequality, for any $\gamma > 0$,
\begin{multline}\label{third}
	\frac{1}{N^{1+\kappa}} \sum_{\eta \in \Omega_N^d} \sum_{j=1}^d \sum_{x \in \T_N^d} (h_{j,x}^\ell)^2 f_{N,t}(\eta)\,  \nu_{N,t} (\eta) \\
	 \leq \frac{1}{\gamma N^{1+\kappa}} \bigg( H_N(t) + \log \Big[ \sum_{\eta \in \Omega_N^d} \exp \Big\{ \gamma \sum_{j=1}^d \sum_{x \in \T_N^d} (h_{j,x}^\ell)^2  \Big\}  \nu_{N,t} (\eta)\Big] \bigg)\\
	\leq \frac{1}{\gamma N^{1+\kappa}} \bigg( H_N(t) + \frac{1}{d (3 \ell)^d} \sum_{j=1}^d\sum_{x \in \T_N^d} \log \Big[ \sum_{\eta\in\Omega_N^d} \exp \Big\{  \gamma d(3\ell)^d  (h_{j,x}^\ell)^2  \Big\} \nu_{N,t} (\eta)\Big] \bigg).
\end{multline}
Take $\gamma = \big[4C_3d3^d N^{2\kappa-2\alpha}\ell^d g_d(\ell)\big]^{-1}$, then the assumption of Lemma \ref{lem:subGau} is satisfied, thus there exists a finite constant $C$ independent of $N$ such that the third term in \eqref{decomposition} is bounded by
\begin{equation}\label{estim:third}
 \frac{C \ell^d g_d(\ell)}{N^{1+2\alpha-\kappa}} \Big( H_N(t) + \frac{N^d}{\ell^d}\Big).
\end{equation}

\medspace

Now we deal with the last term in \eqref{decomposition}. First note that
\[|\varrho_{x+e_j} - \varrho_{x}| \leq N^{-1-\alpha} ||\partial_{u_j} \rho||_\infty.\]
By entropy inequality and H{\"o}lder's inequality, for any $\gamma > 0$, we may bound the last term in \eqref{decomposition} by
\begin{multline}\label{eqnlast1}
\frac{1}{\gamma N^{1+\alpha}} \bigg( H_N(t) + \log  \Big[ \sum_{\eta \in \Omega_N^d} \exp \Big\{ \gamma \sum_{j=1}^d \sum_{x \in \T_N^d} N^{1+\alpha} (\varrho_{x+e_j} - \varrho_{x})  h_{j,x}^\ell \omega_x\omega_{x+e_j}  \Big\}  \nu_{N,t} (\eta) \Big] \bigg)\\
\leq \frac{1}{\gamma N^{1+\alpha}} \bigg( H_N(t) \\
+ \frac{1}{d (3 \ell)^d} \sum_{j=1}^d \sum_{x \in \T_N^d}  \log \Big[ \sum_{\eta \in \Omega_N^d} \exp \Big\{ \gamma d(3 \ell)^d  N^{1+\alpha} (\varrho_{x+e_j} - \varrho_{x})  h_{j,x}^\ell \omega_x\omega_{x+e_j}  \Big\}  \nu_{N,t} (\eta)\Big]\bigg).
\end{multline}
Since  $|\omega_x \omega_{x+e_j}| \leq C:=C(\varrho_\ast,K)$, and we have already shown that $h_{j,x}^\ell$ is sub-Gaussian of order $C_3 N^{2\kappa-2\alpha} g_d(\ell)$, then, for any $\theta \in \R$,
\begin{multline}\label{eqn:4}
\log \Big[ \sum_{\eta \in \Omega_N^d} \exp \{ \theta h_{k,x}^\ell \omega_x \omega_{x+e_j}\}  \nu_{N,t} (\eta) \Big] \leq \log \Big[ \sum_{\eta \in \Omega_N^d} \exp \{ C | \theta h_{k,x}^\ell| \}  \nu_{N,t} (\eta) \Big]\\
\leq \log \Big[ \sum_{\eta \in \Omega_N^d} \Big( \exp \{ C \theta h_{j,x}^\ell \} + \exp \{ -C\theta h_{j,x}^\ell \}\Big)  \nu_{N,t} (\eta) \Big] \leq \log 2 + C \theta^2 N^{2\kappa-2\alpha} g_d(\ell)
\end{multline}
for some constant $C=C(\varrho_*,K,C_3)$. Therefore, there exists some constant $C$ independent of $N$, such that  the term on the right-hand side of \eqref{eqnlast1} is bounded by
\[ \frac{C}{\gamma N^{1+\alpha}} \Big[H_N(t) + \frac{N^d}{\ell^d} \times \ell^{2d} g_d (\ell) \gamma^2 N^{2\kappa-2\alpha}+ \frac{N^d}{\ell^d}\Big].\]
Take $\gamma = N^{-1-\alpha}$, then the last line is bounded by
\begin{equation}\label{esti:last}
C \Big(H_N(t) + \ell^d g_d(\ell)N^{d+2\kappa-4\alpha-2}+ \frac{N^d}{\ell^d}\Big).
\end{equation}

\medspace

We conclude the proof of Proposition \ref{prop:bg} by using \eqref{decomposition}, \eqref{estimate:aL}, \eqref{estim:third} and \eqref{esti:last}.

\section{$1$-d chain of anharmonic oscillators}
\label{sec:chain}

In this and the next sections, we consider a chain of $N$ coupled oscillators in one-dimensional lattice space. The configuration space is  $\Omega_N = (\bR^2)^{\bT_N}$, with its elements denoted by $\eta = \{\eta_x=(p_x,r_x); x \in \bT_N\}$. Above, $\bT_N = \bT_N^1$.
All particles have identical mass $1$.
The momentum and position of the particle $x=1$, ..., $N$ are denoted by $p_x\in\bR$ and $q_x\in\bR$, respectively.
The interaction between two particles $x-1$ and $x$ is determined by an anharmonic spring with the potential energy $V(q_x-q_{x-1})$, where $V$ is some nice function.
The total energy is given by the Hamiltonian
\begin{align*}
  \mathcal H_N(\eta):=\sum_{x\in\bT_N} \frac{p_x^2}2 + V (q_x-q_{x-1}), \quad \forall\,\eta\in\Omega_N.
\end{align*}
The corresponding Hamiltonian dynamics then reads
\begin{equation*}
  \dot p_x=-\partial_{q_x}\mathcal H_N, \quad \dot q_x=\partial_{p_x} \mathcal H_N, \quad \forall\,x\in\bT_N.
\end{equation*}

Assume that $V \in C^2(\bR;\bR)$ and some constant $c>0$, such that
\begin{align*}
  c^{-1} \le V''(r) \le c, \quad \forall\,r\in\bR.
\end{align*}
Define $r_x:=q_x-q_{x-1}$ to be the inter-particle distance and require the periodic boundary condition: $(p_{N+1},r_{N+1})=(p_1,r_1)$.

Observe that the total momentum, the total volume, and the Hamiltonian are conserved.
Under a generic assumption of \emph{local equilibrium}, Euler equations can be formally obtained as the evolution of  these quantities.
However, to prove it for the purely deterministic system turns out to be a difficult task.
Proper stochastic noise helps us solve this problem.
Suppose that at each bond $(x, x + 1)$, the deterministic system is in contact with a thermal bath at fixed temperature.
More precisely, fix some inverse temperature $\beta > 0$ and define
\begin{align*}
  \cY_x := \frac\partial{\partial r_{x+1}} - \frac\partial{\partial r_x}, \quad \cY_x^* = \beta\big(V'(r_{x+1}) - V'(r_x)\big) - \cY_x.
\end{align*}
For some deterministic parameter $\gamma_N>0$ that regulates the strength of the heat bath, consider the operator $\cL_N$ given by
\begin{align}
\label{eq:chain-gen}
  \cL_N := \cA_N + \gamma_N\cS_N, \quad \cS_N := -\frac12\sum_{x\in\bT_N} \cY_x^*\cY_x,
\end{align}
where $\cA_N$ is the Liouville operator given by
\begin{align*}
  \cA_{N} := \sum_{x\in\bT_N} (p_x - p_{x-1})\frac\partial{\partial r_x} + \big(V^\prime(r_{x+1}) - V^\prime(r_x)\big)\frac\partial{\partial p_x}.
\end{align*}
The Markov process generated by $\cL_N$ is equivalently expressed by the following system of stochastic differential equations: for each $x\in\bT_N$,
\begin{equation*}
  \left\{
  \begin{aligned}
    dp_x(t) =\ &\big(V'(r_{x+1}) - V' (r_x)\big)dt, \\
    dr_x(t) =\ &(p_{x+1} - p_x)dt + \frac{\beta\gamma_N}2\big(V'(r_{x+1}) + V'(r_{x-1}) - 2V'(r_x)\big)dt\\
    &+ \sqrt{\gamma_N}\big(dB_t^{x-1} - dB_t^x\big),
  \end{aligned}
  \right.
\end{equation*}
where $\{B_\cdot^x;x\in\bT_N\}$ is a system of independent, standard Brownian motions.
Notice that the total momentum $\sum_x p_x$ and the total length $\sum_x r_x$ are the only conserved quantities of the microscopic dynamics.
The conservation law of energy is no longer preserved by $\cS_N$.

\subsection{Stationary states}

The stationary states of $\cL_N$ are given by the family of canonical Gibbs measures indexed by the global momentum $\bar p\in\bR$ and tension $\tau\in\bR$:
\begin{equation*}
  \nu_{\bar p,\tau}^N(dp\,dr) = \bigotimes_{x\in\bT_N} \sqrt{\frac\beta{2\pi}}\exp\left\{-\frac{\beta(p_x - \bar p)^2}2\right\}dp_x \otimes \pi_\tau(dr_x),
\end{equation*}
where the probability measure $\pi_\tau$ is defined as
\begin{equation*}
  \pi_\tau(dr) := \frac1{Z(\tau)}e^{-\beta(V(r) - \tau r)}dr, \quad Z(\tau) := \int_\bR e^{-\beta(V(r) - \tau r)}dr.
\end{equation*}
Observe that the dependence on $\beta$ is omitted, since it is fixed hereafter.
It is easy to see that $\cA_N$, $\cS_N$ are respectively anti-symmetric and symmetric with respect to the Gibbs states.
Moreover, for all smooth functions $f$, $g$ on $\Omega_N$,
\begin{equation*}
  \int_{\Omega_N} f\big(\cS_Ng\big)\,d\nu_{\bar p,\tau}^N = -\frac12\int_{\Omega_N} \sum_{x\in\bT_N} \big(\cY_xf\big)\big(\cY_xg\big)d\nu_{\bar p,\tau}^N.
\end{equation*}

Define the Gibbs potential $G=G(\tau)$ for $\tau \in \bR$ and the free energy $F=F(r)$ for $r \in \bR$ by the following Legendre transform
\begin{equation*}
  G(\tau) := \frac1\beta\log Z(\tau), \quad F (r) := \sup_{\tau\in\bR} \big\{\tau r - G (\tau)\big\}. 
\end{equation*}
The average length $\bar r=\bar r(\tau)$ and equilibrium tension $\bst=\bst(r)$ are then given by the convex conjugate variables
\begin{equation*}
  \bar r (\tau) := E_{\pi_{\tau}} [r] = G' (\tau), \quad \bst (r) := F' (r). 
\end{equation*}

\subsection{Equilibrium perturbation}

As illustrated in Section \ref{sec:intr}, we fix $(\fp_*,\fr_*)\in\bR^2$ and consider the distribution $\mu_{N,0}$ associated to the profile $(\fp_N^\ini,\fr_N^\ini)$ given by
\begin{align*}
  \begin{pmatrix}
    \fp_N^\ini\\ \fr_N^\ini
  \end{pmatrix}
  :=
  \begin{pmatrix}
    \fp_*\\ \fr_*
  \end{pmatrix}
  +N^{-\alpha}\sum_{j=\pm}\sigma_j^\ini\bsu_j, \quad \bsu_\pm:=
  \begin{pmatrix}
    \pm\sqrt{\bst'(\fr_*)}\\
    1
  \end{pmatrix},
\end{align*}
where $\alpha>0$ and $\sigma_\pm^\ini \in C^\infty(\bT)$.
More precisely, for any  $\vf \in C(\bT)$ and $\ve>0$,
\begin{equation*}
  \lim_{N\to\infty} \mu_{N,0} \bigg\{ \bigg| \frac1{N^{1-\alpha}}\sum_{x\in\bT_N}
  \begin{pmatrix}
    p_x-\fp_*\\
    r_x-\fr_*
  \end{pmatrix}
  \vf \left( \frac xN \right) - \sum_{j=\pm} \bsu_j\int_{\bT} \sigma_j^\ini(u)\vf(u)du \bigg| > \ve \bigg\} = 0.
\end{equation*}
In addition, we require that (\emph{cf.} \eqref{eq:assp-chain})
\begin{align*}
  \int_\bT \sigma_-(u)du=\int_\bT \sigma_+(u)du=0.
\end{align*}
For $0<\kappa\le\alpha$, denote by $\{\eta(t);t\ge0\}$ the Markov process generated by $N^{1+\kappa}\cL_N$ and the initial distribution $\mu_{N,0}$.
As usual, we use the notation $\mu_{N,t}$ for the distribution of $\eta(t)$ on $\Omega_N$.

For $(t,u)\in[0,T]\times\bT$, define $(\fp,\fr)=(\fp,\fr)(t,u)$ by
\begin{equation*}
  \begin{pmatrix}
    \fp_N\\ \fr_N
  \end{pmatrix}
  :=
  \begin{pmatrix}
    \fp_*\\ \fr_*
  \end{pmatrix}
  +N^{-\alpha}\sum_{j=\pm} \sigma_j\big(N^{\kappa-\alpha}t,u+j N^\kappa\sqrt{\bst'(\fr_*)}t\big)\bsu_j,
\end{equation*}
where $\sigma_-$, $\sigma_+$ solve the decoupled system of Burgers equations \eqref{eq:chain-burgers}.
Denote by $\nu_{N,t}$ the slowly varying product measure
\begin{align*}
  \nu_{N,t} (d\eta)=  \bigotimes_{x \in \T_N^d} \nu_{\fp_x^N,\bst_x^N}^1 (d\eta_x), \quad \big(\fp_x^N,\bst_x^N\big):=\big(\fp_N,\bst(\fr_N)\big) \left( t,\frac xN \right).
\end{align*}
Let $f_{N,t}$ be the Radon--Nikodym derivative $d\mu_{N,t}/d\nu_{N,t}$ and recall the relative entropy
\begin{align}
\label{eq:chain-rel-ent}
  H_N (t) = H (f_{N,t}; \nu_{N,t}) := \int_{\Omega_N} f_{N,t}\log f_{N,t} d\nu_{N,t}.
\end{align}
Recall that our argument relies on the smoothness of $\sigma_\pm$, hence we require that $t\in[0,T]$ for (\romannum1) any $T<T_\mathrm{shock}$, the first time when shock appears in the entropy solution to \eqref{eq:chain-burgers} if $\kappa=\alpha$ and (\romannum2) any $T>0$ if $\kappa<\alpha$.
The first result is stated below, \emph{cf.} the case $d=1$ in Theorem \ref{thm1}.

\begin{theorem}
\label{thm:chain-rel-ent}
Suppose that $H_N(0) = o(N^{1-2\alpha})$ and $\alpha\in(0,1/2)$.
If $\kappa\in(0,\alpha]\cap(0,(1-2\alpha)/3)$ and $N^{5\kappa+4\alpha-1} \ll \gamma_N \ll N^{1-\kappa}$, then $H_N(t) = o (N^{1-2\alpha})$ for any $t\in[0,T]$.
\end{theorem}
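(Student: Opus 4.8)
\textbf{Proof strategy for Theorem \ref{thm:chain-rel-ent}.}

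The plan is to mimic the relative entropy scheme carried out for the exclusion process in Section \ref{sec:gasep-rel-ent}, but now adapted to the Hamiltonian chain with its two conservation laws and its hypoelliptic noise $\gamma_N \cS_N$. First I would write down Yau's relative entropy inequality for $H_N(t) = H(f_{N,t};\nu_{N,t})$ against the slowly varying Gibbs state $\nu_{N,t}$: denoting by $\cL_{N,t}^*$ the adjoint of $\cL_N$ in $L^2(\nu_{N,t})$ and by $\psi_{N,t}$ the density of $\nu_{N,t}$ with respect to a fixed reference Gibbs measure, one gets
\begin{equation*}
  \frac{d}{dt}H_N(t) \le -\gamma_N N^{1+\kappa} D_N(f_{N,t};\nu_{N,t}) + \int_{\Omega_N} \Big(N^{1+\kappa}\cL_{N,t}^*\mathbf 1 - \partial_t\log\psi_{N,t}\Big)\,d\mu_{N,t},
\end{equation*}
where $D_N$ is the Dirichlet form associated to $\cS_N$. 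The main point is to expand the integrand, isolate the ``drift'' part coming from $\cA_N$ and the ``fluctuation'' part coming from $\gamma_N\cS_N$, and show that, after Taylor expanding the profiles $(\fp_N,\fr_N)$ in $N^{-\alpha}$ and using that $(\sigma_-,\sigma_+)$ solve the decoupled Burgers system \eqref{eq:chain-burgers} with the correct characteristic speeds $\mp N^\kappa\sqrt{\bst'(\fr_*)}$, the leading terms cancel, leaving a remainder of the required order $o(N^{1-2\alpha})$ plus a term bounded by $CH_N(t)$, whence \Gro inequality closes the argument.

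Concretely I would proceed in the following steps. (1) Compute $\partial_t\log\psi_{N,t}$ explicitly: since $\nu_{N,t}$ is a product of Gaussians in $p_x$ and $\pi_{\bst_x^N}$-distributed $r_x$, this is a linear combination of $(p_x-\fp_x^N)$ and $(V'(r_x)-\bst_x^N)$ with coefficients involving $\partial_t\fp_x^N$, $\partial_t\bst_x^N$. (2) Compute $\cL_{N,t}^*\mathbf 1 = \cA_{N,t}^*\mathbf 1 + \gamma_N\cS_{N,t}^*\mathbf 1$. The antisymmetric part contributes the discrete gradients of $\bst_x^N$ and $\fp_x^N$ paired against the same microscopic currents; the symmetric part, because the reference temperature $\beta^{-1}$ is \emph{constant} in space, contributes only lower-order terms coming from the spatial variation of $\bst_x^N$ inside $\cY_x^*$. (3) Change to the normal coordinates $\sigma_\pm$, i.e. project the pair $(p_x-\fp_*,\,r_x-\fr_*)$ onto $\bsu_\pm$; after the space-time change of variables $s=N^{\kappa-\alpha}t$, $u=x/N+jN^\kappa\sqrt{\bst'(\fr_*)}t$ the Burgers equations \eqref{eq:chain-burgers} kill the $O(N^{1+\kappa}\cdot N^{-1-\alpha}) = O(N^{\kappa-\alpha})$ and $O(N^{\kappa-2\alpha})$ terms, and one is left with error terms coming from: the quadratic Taylor remainder of $\bst$ and $V'$ about $\fr_*$; the discretization error $\nabla_x - \partial_u$; and the contribution of the noise. (4) Bound each error term. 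The quadratic/cross terms of the form $\sum_x a_{x}^N \,\omega_x^{(1)}\omega_{x+1}^{(2)}$ (with $\omega^{(i)}$ the centered microscopic fields) are handled by the Boltzmann--Gibbs-type replacement: one replaces a microscopic quantity by its conditional expectation over a mesoscopic box of size $\ell$, paying a Dirichlet-form cost, and then uses the equivalence of ensembles for the inhomogeneous Gibbs states together with the uniform gradient estimate for the Poisson equation of $\cS_N$ (this is exactly where \cite[Sections 8--9]{Xu20} enter); the sub-Gaussian/entropy-inequality bookkeeping is as in Proposition \ref{prop:bg}, now with $d=1$ so $g_d(\ell)=\ell$. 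Optimizing $\ell$ in $N$ and collecting powers yields the constraints $\kappa\le\alpha$, $\kappa<(1-2\alpha)/3$ and the sandwich $N^{5\kappa+4\alpha-1}\ll\gamma_N\ll N^{1-\kappa}$: the lower bound on $\gamma_N$ is needed so the Dirichlet form is large enough to absorb the replacement cost, the upper bound so the noise-generated error $\gamma_N N^{1+\kappa}\cdot N^{-2-2\alpha}$-type terms stay $o(N^{1-2\alpha})$.

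The main obstacle I expect is step (4) for the Hamiltonian chain rather than the exclusion process. Unlike the exclusion case, the microscopic current of the stretch is $p_{x+1}-p_x$ while that of the momentum is $V'(r_{x+1})-V'(r_x)$, which is a \emph{nonlinear, non-gradient-in-the-stretch} function; closing the equation therefore requires replacing $V'(r_x)$ by $\bst(\fr_x^N) + \bst'(\fr_x^N)(r_x-\fr_x^N) + \text{(fluctuation)}$ and controlling the fluctuation via a genuine Boltzmann--Gibbs principle, for which one needs the spectral/gradient information on $\cS_N$ from \cite{Xu20} since $\cS_N$ acts only on the $r$-variables and is degenerate in the $p$-direction. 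Getting the replacement error uniformly in the (slowly varying) inhomogeneous temperature-free but tension-varying reference measure, with a cost that is compatible with the rather narrow window $N^{5\kappa+4\alpha-1}\ll\gamma_N\ll N^{1-\kappa}$, is the delicate quantitative heart of the proof; everything else is a careful but routine adaptation of the machinery already deployed for the exclusion process.
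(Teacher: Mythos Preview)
Your outline has the right skeleton, but it is missing one essential structural ingredient that the paper builds in from the very start: a \emph{second-order correction} to the reference profile. You propose to run the entropy scheme against $\nu_{N,t}$, whereas the paper works with a modified product measure $\tilde\nu_{N,t}$ associated to
\[
(\tilde\fp_N,\tilde\fr_N)=(\fp_N,\fr_N)+N^{-2\alpha}\sum_{j=\pm}\tilde\sigma_j(s,u_-,u_+)\,\bsu_j,
\]
with explicit correctors $\tilde\sigma_\pm$ given in \eqref{eq:chain-correction}, and only afterwards checks $H_N=o(N^{1-2\alpha})\Leftrightarrow\widetilde H_N=o(N^{1-2\alpha})$. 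The reason is exactly your step (3): when you expand $N^{1+\kappa}(\nabla\bst_x^N,\nabla\fp_{x-1}^N)-\partial_t(\fp_x^N,\fr_x^N)$ and push the Taylor expansion of $\bst$ to second order, at order $N^{\kappa-2\alpha}$ you pick up
\[
\tfrac12\,\partial_u\big[(\sigma_-(s,u_-)+\sigma_+(s,u_+))^2\big]\,\mathbf b,\qquad \mathbf b=(\bst''(\fr_*),0)'.
\]
The decoupled Burgers equations \eqref{eq:chain-burgers} only cancel the diagonal pieces $\partial_u\sigma_\pm^2$ projected on $\bsu_\pm$; the cross term $\partial_u(\sigma_-\sigma_+)$ and the off-diagonal projections survive. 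Without the correctors, $\ep_x^N$ stays of size $N^{\kappa-2\alpha}$, and the entropy-inequality bound for $\cE_{N,t}$ is $H_N(t)+CN^{1+2\kappa-4\alpha}$, which at $\kappa=\alpha$ equals $CN^{1-2\alpha}$ and does \emph{not} close. The correctors $\tilde\sigma_\pm$ are chosen precisely so that these $N^{\kappa-2\alpha}$ residues cancel (Proposition \ref{prop:chain-E}); this is a phenomenon of $n\ge2$ conservation laws, absent for the exclusion process, and is also why the paper needs the zero-mean assumption $\int_\bT\sigma_\pm^\ini=0$ (to define the primitives $\Sigma_\pm$ inside $\tilde\sigma_\pm$). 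So your claim that ``the Burgers equations kill the $O(N^{\kappa-2\alpha})$ terms'' is the gap.

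A secondary point: for the nonlinear fluctuation $\cR_{N,t}=\int\sum_x a_x^N[V'(r_x)-\bst_x^N-\bst'(\fr_x^N)(r_x-\fr_x^N)]\,d\mu_{N,t}$ the paper does not use the flow-lemma/sub-Gaussian scheme of Proposition \ref{prop:bg}. Since $\cS_N$ is a diffusion on the $r$-variables, the replacement is done by solving the box Poisson equation $\sum_y\cY^*_{t,y}\cY_y\psi_x^\ell=\phi_x^\ell-\bar\phi_x^\ell$ and invoking the uniform gradient bound $\sum_y(\cY_y\psi_x^\ell)^2\le C\ell^4\sup\sum_y(\cY_y\phi_x^\ell)^2$ together with the equivalence of ensembles for the exponential moment of $\bar\phi_x^\ell$ (both from \cite{Xu20}); optimizing at $\ell=\gamma_N^{1/5}N^{(1+\alpha)/5}$ yields the error $\gamma_N^{-1/5}N^{4/5+\kappa-6\alpha/5}$ that produces the lower bound $\gamma_N\gg N^{5\kappa+4\alpha-1}$. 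You name the right ingredients from \cite{Xu20}, so this part is repairable, but the analogy with Proposition \ref{prop:bg} is misleading.
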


Theorem \ref{thm:chain-rel-ent} is proved in Section \ref{sec:chain-rel-ent}.
With Theorem \ref{thm:chain-rel-ent} and exactly the same argument used in the proof of Theorem \ref{coro1}, we obtain the equilibrium perturbation.

\begin{theorem}
\label{thm:chain-equi-pert}
Under the assumption of Theorem \ref{thm:chain-rel-ent},
\begin{equation*}
  \begin{aligned}
    \lim_{N\to\infty} \mu_{N,t} \bigg\{ \bigg| \frac1{N^{1-\alpha}}\sum_{x\in\bT_N}
    \left( \frac{r_x-\fr_*}2-\frac{p_x-\fp_*}{2\sqrt{\bst'(\fr_*)}} \right)
    \vf \left( \frac xN-N^\kappa\sqrt{\bst'(\fr_*)}t \right)\\
    - \int_\bT \sigma_-^{(\alpha,\kappa)}(t,u)\vf(u)du \bigg| >\ve \bigg\} = 0,\\
    \lim_{N\to\infty} \mu_{N,t} \bigg\{ \bigg| \frac1{N^{1-\alpha}}\sum_{x\in\bT_N}
    \left( \frac{r_x-\fr_*}2+\frac{p_x-\fp_*}{2\sqrt{\bst'(\fr_*)}} \right)
    \vf \left( \frac xN+N^\kappa\sqrt{\bst'(\fr_*)}t \right)\\
    - \int_\bT \sigma_+^{(\alpha,\kappa)}(t,u)\vf(u)du \bigg| >\ve \bigg\} = 0,
  \end{aligned}
\end{equation*}
for any $t\in[0,T]$, $\vf \in C(\bT)$ and $\ve>0$, where
\begin{equation*}
  \sigma_\pm^{(\alpha,\kappa)}(t,u):=
  \begin{cases}
    \sigma_\pm(t,u), &\text{if }\,0<\alpha<\frac15,\,\kappa=\alpha,\\
    \sigma_\pm^\ini(u), &\text{if }\,0<\alpha<\frac12,\,0<\kappa<\min \left\{ \alpha,\frac{1-2\alpha}3 \right\}.
  \end{cases}
\end{equation*}
\end{theorem}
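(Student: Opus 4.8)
The plan is to derive the two laws of large numbers from the relative entropy estimate of Theorem~\ref{thm:chain-rel-ent} by running the same argument as in the proof of Theorem~\ref{coro1}; I describe it for the $\sigma_-$ statement, the $\sigma_+$ one being symmetric. First I would record the left eigenvectors $\mathbf l_-=\big(-\tfrac1{2\sqrt{\bst'(\fr_*)}},\tfrac12\big)$ and $\mathbf l_+=\big(\tfrac1{2\sqrt{\bst'(\fr_*)}},\tfrac12\big)$, which satisfy $\mathbf l_j'\bsu_k=\mathbf 1_{\{j=k\}}$ for the right eigenvectors $\bsu_\pm=(\pm\sqrt{\bst'(\fr_*)},1)'$, so that the bracket in the theorem equals $\mathbf l_-'(\eta_x-\eta_*)$ with $\eta_*=(\fp_*,\fr_*)'$. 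Then I would split
\[\frac1{N^{1-\alpha}}\sum_{x\in\bT_N}\mathbf l_-'(\eta_x-\eta_*)\,\vf(v_x)=Z_N+Y_N,\qquad v_x:=\tfrac xN-N^\kappa\sqrt{\bst'(\fr_*)}\,t,\]
where $Z_N:=\frac1{N^{1-\alpha}}\sum_x\mathbf l_-'\big(E_{\nu_{N,t}}[\eta_x]-\eta_*\big)\vf(v_x)$ is deterministic and $Y_N$ is centered under $\nu_{N,t}$. By Markov's inequality the theorem reduces to proving $Z_N\to\int_\bT\sigma_-^{(\alpha,\kappa)}(t,u)\vf(u)\,du$ and $E_{\mu_{N,t}}|Y_N|\to0$.

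For the deterministic term I would use that, under the marginal $\nu_{\fp_x^N,\bst_x^N}^1$, $E[p_x]=\fp_x^N=\fp_N(t,x/N)$ and $E[r_x]=\bar r(\bst_x^N)=\bar r\big(\bst(\fr_N(t,x/N))\big)=\fr_N(t,x/N)$, since $\bar r$ and $\bst$ are mutually inverse. Hence $\mathbf l_-'\big(E_{\nu_{N,t}}[\eta_x]-\eta_*\big)=N^{-\alpha}\sum_{j=\pm}\sigma_j\big(N^{\kappa-\alpha}t,\tfrac xN+jN^\kappa\sqrt{\bst'(\fr_*)}t\big)\mathbf l_-'\bsu_j=N^{-\alpha}\sigma_-(N^{\kappa-\alpha}t,v_x)$, the cross term vanishing because $\mathbf l_-'\bsu_+=0$, and therefore $Z_N=\frac1N\sum_x\sigma_-(N^{\kappa-\alpha}t,v_x)\vf(v_x)$. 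This is a Riemann sum of the continuous periodic function $\sigma_-(N^{\kappa-\alpha}t,\cdot)\vf(\cdot)$ along a mesh of $\bT$, so it converges, uniformly in the shift, to $\int_\bT\sigma_-(N^{\kappa-\alpha}t,u)\vf(u)\,du$; since $\sigma_-$ is smooth on $[0,T]\times\bT$ and $N^{\kappa-\alpha}t\to t$ if $\kappa=\alpha$ (where $\alpha<1/5$) while $N^{\kappa-\alpha}t\to0$ if $\kappa<\alpha$, this limit is $\int_\bT\sigma_-^{(\alpha,\kappa)}(t,u)\vf(u)\,du$.

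For the centered term I would invoke the entropy inequality: for every $\gamma>0$,
\[E_{\mu_{N,t}}|Y_N|\le\frac1{\gamma N^{1-2\alpha}}\Big(H_N(t)+\log E_{\nu_{N,t}}\big[e^{\gamma N^{1-2\alpha}|Y_N|}\big]\Big).\]
Using $e^{|a|}\le e^a+e^{-a}$, $\log(a+b)\le\log2+\max\{\log a,\log b\}$ and the product structure of $\nu_{N,t}$, the logarithm is bounded, up to the sign in the exponent, by $\log2+\sum_x\log E_{\nu_{N,t}}\big[\exp\{\gamma N^{-\alpha}\vf(v_x)\mathbf l_-'(\eta_x-E_{\nu_{N,t}}[\eta_x])\}\big]$. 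Here $p_x-\fp_x^N$ is exactly $\mathcal N(0,\beta^{-1})$, while $r_x-\bar r(\bst_x^N)$ is sub-Gaussian with variance proxy bounded uniformly in $x$ and $N$, because $V(r)-\bst_x^N r$ is $c^{-1}$-uniformly convex (Brascamp--Lieb) and $(\fp_x^N,\bst_x^N)$ ranges over a compact set; hence $\mathbf l_-'(\eta_x-E_{\nu_{N,t}}[\eta_x])$ is uniformly sub-Gaussian, each summand is at most $C\gamma^2N^{-2\alpha}\|\vf\|_\infty^2$, and the logarithm is at most $\log2+C\gamma^2N^{1-2\alpha}$. This yields $E_{\mu_{N,t}}|Y_N|\le\big(H_N(t)+\log2\big)/(\gamma N^{1-2\alpha})+C\gamma$. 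Because $H_N(t)=o(N^{1-2\alpha})$ by Theorem~\ref{thm:chain-rel-ent} and $\alpha<1/2$, the first term vanishes for every fixed $\gamma$; letting $\gamma\downarrow0$ afterwards gives $E_{\mu_{N,t}}|Y_N|\to0$. The $\sigma_+$ statement is identical, with $\mathbf l_+'\bsu_-=0$, $\mathbf l_+'\bsu_+=1$.

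The hard part is not in this proof at all: the entire analytic difficulty sits in Theorem~\ref{thm:chain-rel-ent}, which we assume. The only point needing attention here is the \emph{uniformity} of the exponential moment bound for the inhomogeneous Gibbs marginals $\nu_{\fp_x^N,\bst_x^N}^1$ — the Gaussian variance and the Brascamp--Lieb constant must be controlled uniformly as $(\fp_x^N,\bst_x^N)$ sweeps out the compact range of the smooth profile $(\fp_N,\bst(\fr_N))$ on $[0,T]\times\bT$. This is immediate from the fixed inverse temperature $\beta$ and the standing hypothesis $c^{-1}\le V''\le c$, and is exactly the kind of estimate already invoked (through the equivalence of ensembles for inhomogeneous Gibbs states) in the proof of Theorem~\ref{thm:chain-rel-ent}.
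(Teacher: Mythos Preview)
Your proposal is correct and follows exactly the route the paper indicates: the paper omits the proof entirely, stating only that it ``follows from the exactly same argument as we used in the proof of Theorem~\ref{coro1}''. Your write-up fills in that argument faithfully, including the necessary adaptation to unbounded state space via uniform sub-Gaussian bounds on $p_x-\fp_x^N$ and $r_x-\bar r(\bst_x^N)$, which in the paper is supplied by Lemma~\ref{lem:general subgaussian} rather than Brascamp--Lieb.
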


\section{Relative entropy for the oscillator chain}
\label{sec:chain-rel-ent}

We have seen in \eqref{eq:correction} that, for a system of $2$ conservation laws, the non-resonant system of perturbations \eqref{eq:chain-burgers} requires proper second order correction terms.
Hence, we choose the modified profile $(\tilde\fp_N,\tilde\fr_N)(t,u)$ given by
\begin{align*}
  \begin{pmatrix}
    \fp_N\\ \fr_N
  \end{pmatrix}
  +N^{-2\alpha}\sum_{j=\pm} \tilde\sigma_j\big(N^{\kappa-\alpha}t,u-N^\kappa\sqrt{\bst'(\fr_*)}t, u+N^\kappa\sqrt{\bst'(\fr_*)}t\big)\bsu_j,
\end{align*}
where for $(t,u_-,u_+)\in[0,T]\times\bR^2$,
\begin{equation}
\label{eq:chain-correction}
  \begin{aligned}
    \tilde\sigma_-(t,u_-&,u_+):=-\frac{\bst''(\fr_*)}{8\bst'(\fr_*)}\sigma_+^2(t,u_+)\\
    &-\frac{\bst''(\fr_*)}{4\bst'(\fr_*)}\big[\partial_u\sigma_-(t,u_-)\Sigma_+(t,u_+)+\sigma_-(t,u_-)\sigma_+(t,u_+)\big],\\
    \tilde\sigma_+(t,u_-&,u_+):=-\frac{\bst''(\fr_*)}{8\bst'(\fr_*)}\sigma_-^2(t,u_-)\\
    &-\frac{\bst''(\fr_*)}{4\bst'(\fr_*)}\big[\sigma_-(t,u_-)\sigma_+(t,u_+)+\Sigma_-(t,u_-)\partial_u\sigma_+(t,u_+)\big].
  \end{aligned}
\end{equation}
Here $\Sigma_\pm=\int_0^\cdot \sigma_\pm(t,u)du$ are the primitive functions of $\sigma_\pm$.

Let $\tilde\nu_{N,t}$ be the product measure on $\Omega_N$ associated to the profile $(\tilde\fp,\tilde\fr)(t,\cdot)$.
Recall that $\mu_{N,t}$ is the distribution of the dynamics $\eta(t)$.
Denote by $\tilde f_{N,t}$ Radon--Nikodym derivative of $\mu_{N,t}$ with respect to $\tilde\nu_{N,t}$ and by $\widetilde H_N(t)$ the corresponding relative entropy.
Also recall the relative entropy $H_N(t)$ in \eqref{eq:chain-rel-ent}.
The following lemma is straightforward.

\begin{lemma} For any $t \geq 0$, $H_N(t) = o(N^{1-2\alpha})$ if and only if $\widetilde{H}_N (t) = o(N^{1-2\alpha})$.
\end{lemma}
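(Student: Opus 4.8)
The plan is to write $H_N(t)-\widetilde H_N(t)$ exactly in terms of the two reference measures and to show this difference is $o(N^{1-2\alpha})$ under either hypothesis. Set $g_N:=\log\big(d\tilde\nu_{N,t}/d\nu_{N,t}\big)$, so that $f_{N,t}=\tilde f_{N,t}\,e^{g_N}$ ($\nu_{N,t}$ and $\tilde\nu_{N,t}$ being mutually absolutely continuous products of equivalent one-site laws). Using the identity $H(f_{N,t};\nu_{N,t})=\int_{\Omega_N}\log f_{N,t}\,d\mu_{N,t}$ and $\widetilde H_N(t)=\int_{\Omega_N}\log\tilde f_{N,t}\,d\mu_{N,t}$, we obtain the exact relations
\[
  H_N(t)=\widetilde H_N(t)+\int_{\Omega_N} g_N\,d\mu_{N,t},\qquad \widetilde H_N(t)=H_N(t)-\int_{\Omega_N} g_N\,d\mu_{N,t}.
\]
Hence it suffices to show that $\int g_N\,d\mu_{N,t}=o(N^{1-2\alpha})$ whenever $\widetilde H_N(t)=o(N^{1-2\alpha})$, and $\int(-g_N)\,d\mu_{N,t}=o(N^{1-2\alpha})$ whenever $H_N(t)=o(N^{1-2\alpha})$; the two claims are symmetric under exchanging $\nu_{N,t}\leftrightarrow\tilde\nu_{N,t}$ and $g_N\leftrightarrow-g_N$.

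Next I would analyse $g_N$. Since both the Gaussian momentum marginal $e^{-\beta(p-\bar p)^2/2}$ and the length marginal $\pi_\tau$ depend on their parameter only through a term affine in $(p_x,r_x)$ in the exponent, $g_N$ equals a deterministic constant plus a sum over $x\in\bT_N$ of independent single-site terms that are affine in $(p_x,r_x)$, with coefficients equal (up to the factor $\beta$) to the differences at site $x$ between the two profiles. By the construction of the modified profile in \eqref{eq:chain-correction}, the boundedness on $[0,T]\times\bT$ of $\sigma_\pm$, $\partial_u\sigma_\pm$, of the primitives $\Sigma_\pm$ (well defined because $\int_\bT\sigma_\pm=0$), hence of $\tilde\sigma_\pm$, and the smoothness of $\bst$ near $\fr_*$, these coefficients are $\mathcal{O}(N^{-2\alpha})$ uniformly in $x$. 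Two consequences follow. First, $E_{\tilde\nu_{N,t}}[g_N]=H(\tilde\nu_{N,t};\nu_{N,t})$ and $E_{\nu_{N,t}}[-g_N]=H(\nu_{N,t};\tilde\nu_{N,t})$ are sums over $x$ of relative entropies between one-site Gibbs states with parameters at distance $\mathcal{O}(N^{-2\alpha})$; a second-order Taylor expansion — using $c^{-1}\le V''\le c$, which makes the Gibbs potential uniformly strongly convex with bounded second derivative — bounds each summand by $\mathcal{O}(N^{-4\alpha})$, so both relative entropies are $\mathcal{O}(N^{1-4\alpha})=o(N^{1-2\alpha})$. Second, for $\mu\in\{\nu_{N,t},\tilde\nu_{N,t}\}$ the centred variable $g_N-E_\mu[g_N]$ is a sum of $N$ independent, centred, sub-Gaussian variables, each of order $\mathcal{O}(N^{-4\alpha})$: the momentum pieces because $p_x$ is Gaussian of variance $\beta^{-1}$, and the length pieces because $\pi_\tau$ is uniformly strongly log-concave (its log-density has second derivative $\beta V''\ge\beta/c$), so $r_x$ obeys a Gaussian concentration bound uniform over the relevant range of $\tau$. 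Thus $g_N-E_\mu[g_N]$ is sub-Gaussian of order $\mathcal{O}(N^{1-4\alpha})$, and in particular $\log E_\mu\big[e^{g_N-E_\mu[g_N]}\big]=\mathcal{O}(N^{1-4\alpha})$.

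With these estimates the conclusion follows from the entropy inequality. Applied with reference measure $\tilde\nu_{N,t}$ to $h:=g_N-E_{\tilde\nu_{N,t}}[g_N]$ at parameter $1$, it gives
\[
  \int_{\Omega_N}\big(g_N-E_{\tilde\nu_{N,t}}[g_N]\big)\,d\mu_{N,t}\le \widetilde H_N(t)+\log E_{\tilde\nu_{N,t}}\big[e^{g_N-E_{\tilde\nu_{N,t}}[g_N]}\big]=\widetilde H_N(t)+\mathcal{O}(N^{1-4\alpha}),
\]
whence $\int g_N\,d\mu_{N,t}\le \widetilde H_N(t)+H(\tilde\nu_{N,t};\nu_{N,t})+\mathcal{O}(N^{1-4\alpha})=\widetilde H_N(t)+\mathcal{O}(N^{1-4\alpha})$, which is $o(N^{1-2\alpha})$ as soon as $\widetilde H_N(t)=o(N^{1-2\alpha})$; the first identity of the opening paragraph then yields $H_N(t)=o(N^{1-2\alpha})$. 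Exchanging the roles of $\nu_{N,t}$ and $\tilde\nu_{N,t}$ gives the converse. I expect the only delicate point to be the uniform control in the previous paragraph: one must verify that the single-site log-ratios are genuinely affine in $(p_x,r_x)$ with $\mathcal{O}(N^{-2\alpha})$ coefficients, and that the uniform convexity of $V$ indeed produces Gaussian tails for $r_x$ under $\pi_\tau$ that are uniform over the shrinking, hence eventually compact, range of tensions involved — everything else is the standard entropy inequality together with independence across sites.
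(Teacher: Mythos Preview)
Your proposal is correct and follows essentially the same route as the paper: both proofs write $H_N(t)-\widetilde H_N(t)=\int g_N\,d\mu_{N,t}$ with $g_N=\log(d\tilde\nu_{N,t}/d\nu_{N,t})$, exploit that the one-site log-ratios are affine in $(p_x,r_x)$ with $\mathcal O(N^{-2\alpha})$ coefficients, bound the deterministic part by $\mathcal O(N^{1-4\alpha})$, and control the random part via the entropy inequality together with the sub-Gaussianity of $p_x$ and $r_x$ under the Gibbs product measure. The only cosmetic difference is that you package the deterministic piece as $E_{\tilde\nu_{N,t}}[g_N]=H(\tilde\nu_{N,t};\nu_{N,t})$, whereas the paper writes out the explicit formula for $g_N$ and separates it into its linear-in-$(p_x-\tilde\fp_x^N,r_x-\tilde\fr_x^N)$ part and its constant part directly; the resulting bound $H_N(t)\le 2\widetilde H_N(t)+CN^{1-4\alpha}$ is identical.
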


\begin{proof}
Assume first that $\widetilde H_N(t) = o(N^{1-2\alpha})$.
By the definition of relative entropy,
\begin{align*}
  H_N(t) - \widetilde H_N(t) = \int_{\Omega_N} g_{N,t}\,d\mu_{N,t}, \quad g_{N,t}:=\log \left( \frac{d\tilde\nu_{N,t}}{d\nu_{N,t}} \right).
\end{align*}
Direct calculations show that $g_{N,t}(\eta)$ is equal to
\begin{equation*}
  \begin{aligned}
    \beta\sum_{x\in\bT_N} \Big[(\tilde\fp_x^N-\fp_x^N)(p_x-\tilde\fp_x^N)+(\tilde\bst_x^N-\bst_x^N)(r_x-\tilde\fr_x^N)\Big]\\ 
    -\beta\sum_{x\in\bT_N} \left[ \frac{(\tilde\fp_x^N-\fp_x^N)^2}2+G(\tilde\bst_x^N)-G(\bst_x^N)-\tilde\fr_x^N(\tilde\bst_x^N-\bst_x^N) \right]
  \end{aligned}
\end{equation*}
for each $\eta=(p_x,r_x)_{x\in\bT_N}\in\Omega_N$.
The second line is bounded from above by $\mathcal O(N^{1-4\alpha})$.
Meanwhile, by the entropy inequality, the integral of the first line is bounded from above by
\begin{align*}
  &\widetilde H_N (t) + \sum _{x\in\bT_N} \log\int_{\Omega_N} \exp \left\{ \beta
  \begin{pmatrix}
    \tilde\fp_x^N-\fp_x^N\\ \tilde\bst_x^N-\bst_x^N
  \end{pmatrix}
  \cdot
  \begin{pmatrix}
    p_x-\tilde\fp_x^N\\ r_x-\tilde\fr_x^N
  \end{pmatrix}
  \right\} d\tilde\nu_{N,t}\\
  \le\;&\widetilde H_N (t) + \frac{\beta^2}2\sum_{x\in\bT_N} \left(\big|\tilde\fp_x^N-\fp_x^N\big|^2+c_x^2\big|\tilde\fr_x^N-\fr_x^N\big|^2\right),
\end{align*}
where $c_x^2$ is the sub-Gaussian order of $r_x-\tilde\fr_x^N$ under $\tilde\nu_{N,t}$ given by Lemma \ref{lem:general subgaussian}.
Observe that the last term is bounded by $\mathcal O(N^{1-4\alpha})$ and hence $H_N(t) \le 2\widetilde H_N(t)+CN^{1-4\alpha} = o(N^{1-2\alpha})$.
The inverse assertion follows similarly.
\end{proof}

Let $\psi_{N,t}$ be the density function of $\tilde\nu_{N,t}$ with respect to some fixed reference measure $\nu_{\bar p,\tau}^N$.
Without loss of generality, we can choose $(\bar p,\tau)=(0,0)$.
Define the Dirichlet form (\emph{cf.} \eqref{ep:dirichlet})
\begin{align*}
  D_N\big(\tilde f_{N,t};\nu_{N,t}\big) := \int_{\Omega_N} \sum_{x\in\bT_N} \left( \cY_x\sqrt{\tilde f_{N,t}} \right)^2 d\nu_{N,t}.
\end{align*}
Standard manipulation gives
\begin{align*}
  \frac d{dt}\widetilde H_N(t)=\int_{\Omega_N} \tilde f_{N,t} \left( N^{1+\kappa}\cL_N \log\tilde f_{N,t}-\frac d{dt}\log\psi_{N,t} \right) d\tilde\nu_{N,t}.
\end{align*}
Recalling the definition of $\cL_N$ in \eqref{eq:chain-gen}, we have
\begin{align*}
  \tilde f_{N,t}\cL_N\log\tilde f_{N,t} &= \cL_N\tilde f_{N,t}-\frac{\gamma_N}2\sum_{x\in\bT_N} \tilde f_{N,t}^{-1}\big(\cY_x\tilde f_{N,t}\big)^2\\
  &= \cL_N\tilde f_{N,t} -2\gamma_N\sum_{x\in\bT_N} \left( \cY_x\sqrt{\tilde f_{N,t}} \right)^2.
\end{align*}
Integrate it with respect to $\tilde\nu_{N,t}$ and notice that
\begin{equation*}
  \begin{aligned}
    \int_{\Omega_N} \cA_N\tilde f_{N,t}d\tilde\nu_{N,t} &= -\int_{\Omega_N} \tilde f_{N,t}\cA_N\psi_{N,t}\frac{d\tilde\nu_{N,t}}{\psi_{N,t}} = -\int_{\Omega_N} \tilde f_{N,t}\cA_N\log\psi_{N,t}\,d\tilde\nu_{N,t}, \\
    \int_{\Omega_N} \cS_N\tilde f_{N,t}d\tilde\nu_{N,t} &= -\frac12\sum_{x\in\bT_N} \int_{\Omega_N} \cY_x\tilde f_{N,t} \cdot \cY_x\psi_{N,t}\frac{d\tilde\nu_{N,t}}{\psi_{N,t}} \\
    &\le D_N \big(\tilde f_{N,t};\tilde\nu_{N,t}\big) + \frac14\int_{\Omega_N} \tilde f_{N,t}\sum_{x\in\bT_N} \big(\cY_x\log\psi_{N,t}\big)^2d\tilde\nu_{N,t},
  \end{aligned}
\end{equation*}
where the last estimate follows from \CS inequality.
Hence,
\begin{equation*}
  \begin{aligned}
	\frac d{dt}\widetilde{H}_N(t) \le -\gamma_NN^{1+\kappa}D\big(\tilde f_{N,t};\tilde\nu_{N,t}\big)+\int_{\Omega_N} \tilde f_{N,t}J_{N,t}\,d\tilde\nu_{N,t}\\
	+\frac{\gamma_NN^{1+\kappa}}4\int_{\Omega_N} \tilde f_{N,t}\sum_{x\in\bT_N} \big(\cY_x\log\psi_{N,t}\big)^2d\tilde\nu_{N,t},
	\end{aligned}
\end{equation*}
where $J_{N,t}:=(-N^{1+\kappa}\cA_N-d/dt)\log\psi_{N,t}$.

With the choice $(\bar p,\tau)=(0,0)$, $\log\psi_{N,t}=\log(d\tilde\nu_{N,t}/\nu_{0,0}^N)$ reads
\begin{align*}
  \log\psi_{N,t}=\beta\sum_{x\in\bT_N} \left[ \tilde\fp_x^Np_x-\frac{(\tilde\fp_x^N)^2}2+\tilde{\bst}_x^Nr_x-G\big(\tilde{\bst}_x^N\big)+G(0) \right].
\end{align*}
Elementary computation then shows that
\begin{equation*}
  \begin{aligned}
    \sum_{x\in\bT_N} \big(\cY_x\log\psi_{N,t}\big)^2 &= \beta^2\sum_{x\in\bT_N} \big(\tilde\bst_{x+1}^N-\tilde\bst_x^N\big)^2,\\
	-\cA_N\log\psi_{N,t} &= \beta\sum_{x\in\bT_N} \begin{pmatrix}\tilde{\bst}_{x+1}^N - \tilde{\bst}_x^N \\ \tilde{\fp}_x^N - \tilde{\fp}_{x-1}^N\end{pmatrix} \cdot \begin{pmatrix}p_x - \tilde{\fp}_x^N \\ V'(r_x) - \tilde{\bst}_x^N\end{pmatrix}, \\
	-\frac d{dt}\log\psi_{N,t} &= -\beta\sum_{x\in\bT_N} \frac d{dt}\begin{pmatrix}\tilde{\fp}_x^N \\ \tilde{\fr}_x^N\end{pmatrix} \cdot \begin{pmatrix}p_x -\tilde{ \fp}_x^N \\ \bst' (\tilde{\fr}_x^N)(r_x - \tilde{\fr}_x^N)\end{pmatrix}. 
\end{aligned}
\end{equation*}
Therefore, we finally obtain the following inequality, \emph{cf.} \eqref{eqn:relatEnt}:
\begin{align}
\label{eq:chain-ent-ineq}
  \frac d{dt} \widetilde H_N(t) \le -\gamma_NN^{1+\kappa}D\big(\tilde f_{N,t};\tilde\nu_{N,t}\big) + \beta\big(\cR_{N,t} + \cE_{N,t}\big),
\end{align}
where
\begin{equation*}
  \begin{aligned}
    \cR_{N,t} := \int_{\Omega_N} &\tilde f_{N,t}\sum_{x\in\bT_N} \frac{d\tilde\fr_x^N}{dt}\big[V'(r_x) - \tilde\bst_x^N - \bst'(\tilde\fr_x^N)(r_x -\tilde\fr_x^N)\big]d\tilde\nu_{N,t};\\
    \cE_{N,t} := \int_{\Omega_N} &\tilde f_{N,t}\sum_{x\in\bT_N} \left[ \frac{\gamma_NN^{1+\kappa}\beta}4\big(\nabla\tilde\bst_x^N\big)^2 + \ep_x^N \cdot
    \begin{pmatrix}
      p_x - \tilde{\fp}_x^N \\ V'(r_x) - \tilde{\bst}_x^N
    \end{pmatrix}
    \right] d\tilde\nu_{N,t}.
  \end{aligned}
\end{equation*}
Here we use the abbreviation $\nabla f_x=f_{x+1}-f_x$ and define
\begin{equation*}
    \ep_x^N := N^{1+\kappa}
    \begin{pmatrix}
      \nabla\tilde\bst_x^N \\ \nabla\tilde\fp_{x-1}^N
    \end{pmatrix}
    -\frac d{dt}
    \begin{pmatrix}
      \tilde\fp_x^N \\ \tilde\fr_x^N
    \end{pmatrix}.
\end{equation*}

\subsection{Proofs of Theorem \ref{thm:chain-rel-ent} and \ref{thm:chain-equi-pert}}

Theorem \ref{thm:chain-rel-ent} follows from the inequality \eqref{eq:chain-ent-ineq}, Propositions \ref{prop:chain-E} and \ref{prop:chain-R} below.

\begin{proposition}
\label{prop:chain-E}
There exists a constant $C$ independent of $N$, such that
\begin{align*}
  \cE_{N,t}\le \widetilde H_N(t)+C\big(\gamma_NN^{\kappa-2\alpha}+N^{-1+2\kappa-2\alpha}+N^{1+2\kappa-6\alpha}\big).
\end{align*}
\end{proposition}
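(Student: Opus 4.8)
The plan is to split $\cE_{N,t}$ into a deterministic part and a mean-zero fluctuation part. The term $\tfrac{\gamma_NN^{1+\kappa}\beta}{4}(\nabla\tilde\bst_x^N)^2$ does not depend on $\eta$, so integrating $\tilde f_{N,t}$ against it merely reproduces it. Since the non-constant part of the profile $(\tilde\fp_N,\tilde\fr_N)$ is smooth and of size $N^{-\alpha}$, a Taylor expansion in the lattice spacing $N^{-1}$ gives $|\nabla\tilde\bst_x^N|=O(N^{-1-\alpha})$ uniformly in $x$; summing $(\nabla\tilde\bst_x^N)^2$ over the $N$ sites of $\bT_N$ yields $O(N^{-1-2\alpha})$, and multiplying by $\gamma_NN^{1+\kappa}\beta/4$ produces the first error term $O(\gamma_NN^{\kappa-2\alpha})$.

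The core of the proof is the estimate $\|\ep_x^N\|_\infty=O(N^{\kappa-1-\alpha}+N^{\kappa-3\alpha})$, where the specific choices of the Burgers profiles $\sigma_\pm$ in \eqref{eq:chain-burgers} and of the second-order correctors $\tilde\sigma_\pm$ in \eqref{eq:chain-correction} are used. Expanding $N^{1+\kappa}\nabla\tilde\bst_x^N$ and $N^{1+\kappa}\nabla\tilde\fp_{x-1}^N$ to second order in $N^{-1}$, expanding $\bst(\tilde\fr_N)$ to second order around $\fr_*$, and comparing with $\tfrac{d}{dt}(\tilde\fp_x^N,\tilde\fr_x^N)$, one checks that the $O(N^{\kappa-\alpha})$ contributions cancel by the choice of characteristic speeds $\pm\sqrt{\bst'(\fr_*)}$, equivalently by \eqref{eq:linear-cl} applied to $\bsu_\pm$, and that the $O(N^{\kappa-2\alpha})$ contributions cancel because $\sigma_\pm$ solve \eqref{eq:chain-burgers} while $\tilde\sigma_\pm$ solve the corrector equations built into \eqref{eq:chain-correction} (here the $\tfrac12\bst''(\fr_*)(\cdot)^2$ Taylor term is what must be absorbed by $\tilde\sigma_\pm$). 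What survives is the third-order lattice Taylor remainder, of order $N^{\kappa-1-\alpha}$, together with the cubic Taylor remainder of $\bst$ and the term $N^{\kappa-3\alpha}\partial_s\tilde\sigma_\pm$, both of order $N^{\kappa-3\alpha}$; since $0<\alpha<1/2$ these are exactly the two powers quoted. Verifying these cancellations, i.e.\ that the microscopic Euler correction is consistent with the nonlinear geometric optics expansion of Section \ref{sec:intr}, is the main obstacle; everything else is soft bookkeeping.

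With this bound in hand, the fluctuation part is controlled by the entropy inequality: for any $\gamma>0$ it is at most $\gamma^{-1}\widetilde{H}_N(t)+\gamma^{-1}\log\int_{\Omega_N}\exp\{\gamma\sum_{x\in\bT_N}\ep_x^N\cdot(p_x-\tilde\fp_x^N,\,V'(r_x)-\tilde\bst_x^N)\}\,d\tilde\nu_{N,t}$. Because $\tilde\nu_{N,t}$ is a product measure and the $x$-th summand depends only on $\eta_x=(p_x,r_x)$, the logarithm of the exponential factorizes over $x$. By construction $E_{\tilde\nu_{N,t}}[p_x]=\tilde\fp_x^N$ and $E_{\tilde\nu_{N,t}}[V'(r_x)]=\tilde\bst_x^N$, so each summand is centered; moreover $p_x-\tilde\fp_x^N$ is Gaussian and $V'(r_x)-\tilde\bst_x^N$ is sub-Gaussian with order bounded uniformly in $x$ and $N$ (Lemma \ref{lem:general subgaussian}), whence each factor is at most $\exp\{C\gamma^2|\ep_x^N|^2\}$. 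Taking $\gamma=1$ gives the bound $\widetilde{H}_N(t)+C\sum_{x\in\bT_N}|\ep_x^N|^2=\widetilde{H}_N(t)+CN\big(N^{\kappa-1-\alpha}+N^{\kappa-3\alpha}\big)^2=\widetilde{H}_N(t)+C\big(N^{-1+2\kappa-2\alpha}+N^{1+2\kappa-6\alpha}\big)$, which combined with the deterministic estimate completes the proof.
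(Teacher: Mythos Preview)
Your proposal is correct and follows essentially the same approach as the paper: bound the deterministic $(\nabla\tilde\bst_x^N)^2$ term directly via the $O(N^{-1-\alpha})$ gradient estimate, use the Burgers equations \eqref{eq:chain-burgers} and the corrector choice \eqref{eq:chain-correction} to kill the $O(N^{\kappa-\alpha})$ and $O(N^{\kappa-2\alpha})$ contributions to $\ep_x^N$, and then apply the entropy inequality together with the product structure of $\tilde\nu_{N,t}$ and sub-Gaussianity of the centered variables. The paper's proof is terser in the last step (it just writes ``the conclusion then follows from the relative entropy inequality''), but your spelled-out version with $\gamma=1$ is exactly what is meant.
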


\begin{proof}
Recall that the profile $(\tilde\fp_N,\tilde\fr_N)$ is explicitly given by
\begin{align*}
  \begin{pmatrix}
    \tilde\fp_N \\ \tilde\fr_N
  \end{pmatrix}
  (t,u) =
  \begin{pmatrix}
    \fp_* \\ \fr_*
  \end{pmatrix}
  +\sum_{j=\pm} \left[ \frac{\sigma_j(s,u_j)}{N^\alpha}+\frac{\tilde\sigma_j(s,u_-,u_+)}{N^{2\alpha}} \right] \bsu_j,
\end{align*}
where $\sigma_\pm=\sigma_\pm(s,u)$, $\tilde\sigma_\pm=\tilde\sigma(s,u_1,u_2)$ are smooth functions given respectively by \eqref{eq:chain-burgers} and \eqref{eq:chain-correction}, $s=N^{\kappa-\alpha}t$, $\bsu_-=(-\sqrt{\bst'(\fr_*)},1)'$, $\bsu_+=(\sqrt{\bst'(\fr_*)},1)'$ and
\begin{align*}
  u_-=u-N^\kappa\sqrt{\bst'(\fr_*)}t, \quad u_+=u+N^\kappa\sqrt{\bst'(\fr_*)}t.
\end{align*}

For the first term in $\cE_{N,t}$, since $\nabla\tilde\bst_x^N \le N^{-1}\sup_x |\partial_x\bst(\tilde\fr(t,x))| \le CN^{-1-\alpha}$,
\begin{align*}
  \frac{\gamma_NN^{1+\kappa}\beta^2}4\int_{\Omega_N} \tilde f_{N,t}\sum_{x\in\bT_N} \big(\nabla\tilde\bst_x^N\big)^2d\tilde\nu_{N,t} \le C\gamma_NN^{\kappa-2\alpha}.
\end{align*}

We focus on the second term in $\cE_{N,t}$.
By Taylor's expansion,
\begin{align*}
  \begin{pmatrix}
    \nabla\tilde\bst_x^N \\ \nabla\tilde\fp_{x-1}^N
  \end{pmatrix}
  =\frac1N\frac\partial{\partial u}
  \begin{pmatrix}
    \bst(\tilde\fr_N)\\
    \tilde\fp_N
  \end{pmatrix}
  \left( t,\frac xN \right) + \mathcal O\big(N^{-2-\alpha}\big).
\end{align*}
Expanding the function $(\fp,\fr)\mapsto(\bst(\fr),\fp)$ at $(\fp_*,\fr_*)$ up to the second order,
\begin{multline*}
    \begin{pmatrix}
	\nabla\tilde\bst_x^N \\ \nabla\tilde\fp_{x-1}^N
\end{pmatrix}
=A\sum_{j=\pm} \left[ \frac{\partial_u\sigma_j(s,u_j)}{N^{1+\alpha}}+\frac{(\partial_{u_1}+\partial_{u_2})\tilde\sigma_j(s,u_-,u_+)}{N^{1+2\alpha}} \right] \bsu_j\\
+\frac{\partial_u[(\sigma_-(s,u_-)+\sigma_+(s,u_+))^2]}{2N^{1+2\alpha}}\mathbf b + \mathcal O\big(N^{-2-\alpha}+N^{-1-3\alpha}\big),
\end{multline*}
where $A=\left[\begin{smallmatrix}0 &\bst'(\fr_*)\\1 &0\end{smallmatrix}\right]$ and $\mathbf b=(\bst''(\fr_*),0)'$.
Meanwhile,
\begin{multline*}
    \frac d{dt}
\begin{pmatrix}
	\tilde{\fp}_x^N \\ \tilde{\fr}_x^N
\end{pmatrix}
=\sum_{j=\pm} \left[ \frac{\partial_s\sigma_j(s,u_j)}{N^{-\kappa+2\alpha}}+\frac{j\sqrt{\bst'(\fr_*)}\partial_u\sigma_j(s,u_j)}{N^{-\kappa+\alpha}} \right] \bsu_j\\
+\frac{\sqrt{\bst'(\fr_*)}}{N^{-\kappa+2\alpha}}\sum_{j=\pm} (-\partial_{u_1}+\partial_{u_2})\tilde\sigma_j(s,u_-,u_+)\bsu_j + \mathcal O\big(N^{\kappa-3\alpha}\big).
\end{multline*}
Noticing that $A\bsu_\pm=\pm\sqrt{\bst'(\fr_*)}\bsu_\pm$ for $j=\pm$,
\begin{equation}\label{chain1}
  \begin{aligned}
    \ep_x^N=\;&\frac1{N^{-\kappa+2\alpha}} \bigg\{ \big[-\partial_s\sigma_-(s,u_-)-2\sqrt{\bst'(\fr_*)}\partial_{u_2}\tilde\sigma_-(s,u_-,u_+)\big]\bsu_-\\
    &+\big[-\partial_s\sigma_+(s,u_+)+2\sqrt{\bst'(\fr_*)}\partial_{u_1}\tilde\sigma_+(s,u_-,u_+)\big]\bsu_+\\
    &+\frac{\partial_u[(\sigma_-(s,u_-)+\sigma_+(s,u_+))^2]}2\mathbf b \bigg\} + \mathcal O\big(N^{-1+\kappa-\alpha}+N^{\kappa-3\alpha}\big).
  \end{aligned}
\end{equation}
We show that the terms in the first bracket vanishes.
From \eqref{eq:chain-correction},
\begin{align*}
  \partial_{u_2}\tilde\sigma_-(s,u_-,u_+)=-\frac{\bst''(\fr_*)}{8\bst'(\fr_*)} \partial_u\big[\sigma_+^2(t,u_+)+2\sigma_-(s,u_-)\sigma_+(s,u_+)\big],\\
  \partial_{u_1}\tilde\sigma_+(s,u_-,u_+)=-\frac{\bst''(\fr_*)}{8\bst'(\fr_*)} \partial_u\big[\sigma_-^2(t,u_+)+2\sigma_-(s,u_-)\sigma_+(s,u_+)\big].
\end{align*}
Therefore, with the identity $\bst''(\fr_*)(\bsu_--\bsu_+)=-2\sqrt{\bst'(\fr_*)}\mathbf b$,
\begin{multline*}
    -2\sqrt{\bst'(\fr_*)}\partial_{u_2}\tilde\sigma_-(s,u_-,u_+)\bsu_-+2\sqrt{\bst'(\fr_*)}\partial_{u_1}\tilde\sigma_+(s,u_-,u_+)\bsu_+\\
=\frac{\bst''(\fr_*)}{4\sqrt{\bst'(\fr_*)}}\partial_u\big[\sigma_+^2(s,u_+)\bsu_--\sigma_-^2(s,u_-)\bsu_+\big]-\partial_u\big[\sigma_-(s,u_-)\sigma_+(s,u_+)\big]\mathbf b.
\end{multline*}
The bracket in \eqref{chain1} is then equal to
\begin{multline*}
    -\partial_s\sigma_-(s,u_-)\bsu_- - \left[ \frac{\bst''(\fr_*)}{4\sqrt{\bst'(\fr_*)}}\bsu_+-\frac12\mathbf b \right] \partial_u\sigma_-^2(s,u_-)\\
-\partial_s\sigma_+(s,u_+)\bsu_+ + \left[ \frac{\bst''(\fr_*)}{4\sqrt{\bst'(\fr_*)}}\bsu_-+\frac12\mathbf b \right] \partial_u\sigma_+^2(s,u_+).
\end{multline*}
Observe from \eqref{eq:chain-burgers} that it is identically zero, hence
\begin{align*}
  \ep_x^N \le C\big(N^{-1+\kappa-\alpha}+N^{\kappa-3\alpha}\big).
\end{align*}
The conclusion then follows from the relative entropy inequality.
\end{proof}

\begin{corollary}[Harmonic chain]
When the oscillators perform harmonic interaction, i.e., $V (r) \propto r^2$,
$\cR_{N,t}$ is identically zero.
Hence, Theorems \ref{thm:chain-rel-ent} and \ref{thm:chain-equi-pert} hold autonomously.
\end{corollary}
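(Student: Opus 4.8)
The whole point of the statement is that the harmonic assumption annihilates the delicate term $\cR_{N,t}$, after which the entropy bound \eqref{eq:chain-ent-ineq} can be closed using Proposition \ref{prop:chain-E} alone. The first step is to identify the equilibrium tension: if $V(r)=\tfrac c2 r^2+br+\text{const}$ with $c>0$, then $\pi_\tau$ is Gaussian, so $\bar r(\tau)=G'(\tau)$ is affine in $\tau$, and inverting the Legendre transform gives $\bst=F'=V'$; in particular $\bst'\equiv V''=c>0$ (so the non-degeneracy hypothesis $\bst'(\fr_*)\ne0$ holds) and $\bst''\equiv0$. Equivalently, the integration-by-parts identity $E_{\pi_\tau}[V'(r)]=\tau$, valid for any smooth potential, combined with the affineness of $V'$, gives $\tau=V'(\bar r(\tau))$, i.e. $\bst=V'$. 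A harmless byproduct: the second-order corrections \eqref{eq:chain-correction} vanish (each being proportional to $\bst''(\fr_*)$), so $(\tilde\fp_N,\tilde\fr_N)=(\fp_N,\fr_N)$ and $\tilde\nu_{N,t}=\nu_{N,t}$, consistently with the fact that \eqref{eq:chain-burgers} degenerates to $\partial_s\sigma_\pm=0$ for a linear chain.

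Next I would substitute these facts into
\[
  \cR_{N,t}=\int_{\Omega_N}\tilde f_{N,t}\sum_{x\in\bT_N}\frac{d\tilde\fr_x^N}{dt}\big[V'(r_x)-\tilde\bst_x^N-\bst'(\tilde\fr_x^N)(r_x-\tilde\fr_x^N)\big]\,d\tilde\nu_{N,t}.
\]
By construction $\tilde\bst_x^N=\bst(\tilde\fr_x^N)=V'(\tilde\fr_x^N)$ and $\bst'(\tilde\fr_x^N)=V''(\tilde\fr_x^N)$; since $V'$ is affine, its first-order Taylor expansion $V'(r_x)=V'(\tilde\fr_x^N)+V''(\tilde\fr_x^N)(r_x-\tilde\fr_x^N)$ is exact, so the square bracket vanishes identically, for every configuration and every $x$. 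Hence $\cR_{N,t}\equiv0$, which is the first assertion.

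For the conclusion, I would plug $\cR_{N,t}=0$ into \eqref{eq:chain-ent-ineq}, discard the non-positive Dirichlet term, and invoke Proposition \ref{prop:chain-E} to obtain $\tfrac d{dt}\widetilde H_N(t)\le\beta\widetilde H_N(t)+C\big(\gamma_NN^{\kappa-2\alpha}+N^{-1+2\kappa-2\alpha}+N^{1+2\kappa-6\alpha}\big)$. Under $\alpha\in(0,1/2)$, $\kappa\le\alpha$ and $\gamma_N\ll N^{1-\kappa}$, the three error terms are $o(N^{1-2\alpha})$; since also $\widetilde H_N(0)=o(N^{1-2\alpha})$, \Gro inequality gives $\widetilde H_N(t)=o(N^{1-2\alpha})$, equivalently $H_N(t)=o(N^{1-2\alpha})$ for $t\in[0,T]$, which is Theorem \ref{thm:chain-rel-ent}; Theorem \ref{thm:chain-equi-pert} then follows from it by the same entropy-inequality computation that deduces Theorem \ref{coro1} from Theorem \ref{thm1}. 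Because the Dirichlet form is never used, neither Proposition \ref{prop:chain-R}, nor the lower bound $\gamma_N\gg N^{5\kappa+4\alpha-1}$, nor the restriction $\kappa<(1-2\alpha)/3$ is required in the harmonic case. I do not anticipate a genuine obstacle: the only step asking for a bit of care will be the identification $\bst\equiv V'$, and the rest is a one-line cancellation plus the already-available machinery.
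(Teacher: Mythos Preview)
Your argument is correct and is exactly the intended one: the bracket in $\cR_{N,t}$ is the first-order Taylor remainder of $V'$ at $\tilde\fr_x^N$, which vanishes identically when $V'$ is affine (equivalently, $\bst=V'$ in the harmonic case), and then \eqref{eq:chain-ent-ineq} together with Proposition~\ref{prop:chain-E} and \Gro inequality closes the entropy estimate. The paper states the corollary without proof, so you are simply filling in the evident details; your further observation that the lower bound on $\gamma_N$ and the constraint $\kappa<(1-2\alpha)/3$ become superfluous is the correct reading of ``hold autonomously''.
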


\begin{proposition}
\label{prop:chain-R}
There exists a constant $C$ independent of $N$, such that
\begin{align*}
  \cR_{N,t} \le \frac{\gamma_NN^{1+\kappa}}\beta D_N \big(\tilde f_{N,t};\tilde\nu_{N,t}\big) + C \left( \widetilde H_N(t) + \gamma_N^{-\frac15}N^{\frac45+\kappa-\frac65\alpha} \right).
\end{align*}
\end{proposition}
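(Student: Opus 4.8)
The plan is a fluctuation--dissipation reduction in the spirit of \cite[Sections 8 \& 9]{Xu20}. The quantity $\cR_{N,t}$ is the space--time pairing of the slowly varying, small coefficient $\frac{d\tilde\fr_x^N}{dt}=\mathcal O(N^{\kappa-\alpha})$ against the local fluctuation field
\[
  \mathfrak{g}_x:=V'(r_x)-\tilde\bst_x^N-\bst'(\tilde\fr_x^N)\big(r_x-\tilde\fr_x^N\big),
\]
which is the second order Taylor remainder of the tension at the local length $\tilde\fr_x^N$. Since the average length and the tension are convex conjugates, $E_{\tilde\nu_{N,t}}[V'(r_x)]=\tilde\bst_x^N$ and $E_{\tilde\nu_{N,t}}[r_x]=\tilde\fr_x^N$, so $\mathfrak{g}_x$ is a cylinder function of $r_x$ alone with zero mean under $\tilde\nu_{N,t}$; when the interaction is harmonic $V'$ is affine, $\mathfrak{g}_x\equiv0$, and this is the content of the Corollary. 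Fix a scale $\ell=\ell(N)$ to be optimized at the end, let $\bar r_x^\ell$ be the average of the $r$-variables over an interval of $\ell$ sites around $x$, and split the argument into three steps: (i) replace $\mathfrak{g}_x$ by the conditional expectation $E_{\tilde\nu_{N,t}}[\mathfrak{g}_x\mid\bar r_x^\ell]$, paying the cost against the dissipation $\gamma_N N^{1+\kappa}D_N(\tilde f_{N,t};\tilde\nu_{N,t})$; (ii) identify this conditional expectation, by the equivalence of ensembles for inhomogeneous Gibbs states, with $\tfrac12\bst''(\tilde\fr_x^N)\big(\bar r_x^\ell-\tilde\fr_x^N\big)^2$ up to an error of order $\ell^{-1}$ per site; (iii) bound the remaining quadratic functional of block fluctuations by the entropy inequality.

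For step (i) I would invoke the uniform gradient estimate for the Poisson equation of the symmetric part $\cS_N$: there is a family of local functions $\mathfrak{h}_x$, supported in an $\mathcal O(\ell)$-neighbourhood of $x$ and with controlled $\cY_y$-gradients, such that $\cS_N\mathfrak{h}_x$ equals $\mathfrak{g}_x-E_{\tilde\nu_{N,t}}[\mathfrak{g}_x\mid\bar r_x^\ell]$ up to a discrete gradient and a term of higher order in $N^{-\alpha}$. Pairing $\sum_x\frac{d\tilde\fr_x^N}{dt}\,\cS_N\mathfrak{h}_x$ against $\tilde f_{N,t}$, integrating by parts to transfer each $\cY_y$ onto $\tilde f_{N,t}$ (collecting the lower order corrections that appear because $\tilde\nu_{N,t}$ is only locally stationary), and applying $2ab\le\delta a^2+\delta^{-1}b^2$ with $a=\cY_y\sqrt{\tilde f_{N,t}}$, one absorbs half of the resulting expression into $\tfrac{\gamma_N N^{1+\kappa}}{\beta}D_N(\tilde f_{N,t};\tilde\nu_{N,t})$ for a suitable $\delta$, and controls the other half, using $\big|\frac{d\tilde\fr_x^N}{dt}\big|=\mathcal O(N^{\kappa-\alpha})$ and the gradient bound on $\mathfrak{h}_x$, by a remainder of the form $C\gamma_N^{-1}N^{\kappa-2\alpha}\ell^{4}$. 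The discrete-gradient leftover from localizing the Poisson solution is removed by one further summation by parts, which gains the extra factor $\nabla\tilde\fr_x^N=\mathcal O(N^{-1-\alpha})$ and is hence negligible.

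For step (iii), the entropy inequality with a parameter of order one reduces the quadratic term to $C\widetilde H_N(t)$ plus a constant times the logarithm of the exponential moment of the field $\sum_x\frac{d\tilde\fr_x^N}{dt}\,\tfrac12\bst''(\tilde\fr_x^N)\big(\bar r_x^\ell-\tilde\fr_x^N\big)^2$ under $\tilde\nu_{N,t}$. Since $r_x-\tilde\fr_x^N$ is sub-Gaussian of bounded order under $\tilde\nu_{N,t}$ (Lemma \ref{lem:general subgaussian}), the block average $\bar r_x^\ell-\tilde\fr_x^N$ is sub-Gaussian of order $\mathcal O(\ell^{-1})$; grouping the sites into $\mathcal O(\ell)$ well-separated classes and expanding the logarithm exactly as in the proof of Theorem \ref{coro1}, this contribution — together with the equivalence-of-ensembles error of step (ii) — is $\mathcal O(N^{1+\kappa-\alpha}\ell^{-1})$. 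Adding it to the step-(i) remainder one arrives at a bound of the form
\[
  \cR_{N,t}\le \frac{\gamma_N N^{1+\kappa}}{\beta}D_N(\tilde f_{N,t};\tilde\nu_{N,t})+C\widetilde H_N(t)+C\big(N^{1+\kappa-\alpha}\ell^{-1}+\gamma_N^{-1}N^{\kappa-2\alpha}\ell^{4}\big),
\]
and balancing the two error terms, that is, choosing $\ell\asymp(\gamma_N N^{1+\alpha})^{1/5}$, turns the last bracket into $C\gamma_N^{-1/5}N^{4/5+\kappa-6\alpha/5}$, which is the claimed estimate; under the standing hypothesis $N^{5\kappa+4\alpha-1}\ll\gamma_N\ll N^{1-\kappa}$ it is $o(N^{1-2\alpha})$.

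The hard part is step (i): one needs both the solvability of the Poisson equation for $\cS_N$ and an $N$-uniform control of the $\cY$-gradients of its solution, which is delicate precisely because the conservative noise preserves only the momentum and the length but not the energy, so that $\cS_N$ acts on the $r$-variables through the nonlinear force $V'$ rather than through the length itself — this is exactly the input furnished by \cite[Sections 8 \& 9]{Xu20}. A secondary point to watch throughout is that $\tilde\nu_{N,t}$ is only locally stationary, so every integration by parts (the transfer of $\cS_N$ onto $\tilde f_{N,t}$ as well as those hidden in the one-block and equivalence-of-ensembles estimates) generates correction terms carrying the spatial gradient $\nabla\tilde\fr_x^N=\mathcal O(N^{-1-\alpha})$ of the profile, which must be checked to remain of lower order.
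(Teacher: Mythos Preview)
Your outline is correct and follows essentially the same route as the paper: one-block replacement via the Poisson equation for the local stochastic generator with the gradient estimate of \cite[Proposition~9.1]{Xu20}, then equivalence of ensembles \cite[Proposition~8.3]{Xu20} plus the entropy inequality, and finally the balancing choice $\ell\asymp(\gamma_N N^{1+\alpha})^{1/5}$, yielding exactly the error $\gamma_N^{-1/5}N^{4/5+\kappa-6\alpha/5}$.

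Two organizational differences are worth flagging. First, the paper inserts an explicit block-averaging step before the Poisson equation: it replaces $\phi_x$ by $\phi_x^\ell:=\ell^{-1}\sum_{y=0}^{\ell-1}\phi_{x+y}$ (at the cost $\widetilde H_N(t)+C\ell^2N^{-1+2\kappa-2\alpha}$, coming from the Lipschitz bound on $a_x^N:=\partial_t\tilde\fr_x^N$), and only then solves $\sum_y\cY_{t,x+y}^*\cY_{x+y}\psi_x^\ell=\phi_x^\ell-\bar\phi_x^\ell$. The point is that $|\cY_{x+y}\phi_x^\ell|\le C/\ell$, so the gradient estimate gives $\sum_y(\cY_{x+y}\psi_x^\ell)^2\le C\ell^3$, and after the Cauchy--Schwarz overcounting factor $\ell$ one lands on the claimed $\ell^4$. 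If instead you solve directly for $\mathfrak g_x-E_{\tilde\nu_{N,t}}[\mathfrak g_x\mid\bar r_x^\ell]$, the right-hand side has $\cY$-gradients of size $\mathcal O(1)$ rather than $\mathcal O(1/\ell)$, and the naive bookkeeping yields $\ell^5$; your ``up to a discrete gradient'' remark must therefore be read as encoding precisely this preliminary averaging (after summation by parts the discrete gradient transfers onto $\nabla a_x^N=\mathcal O(N^{-1+\kappa-\alpha})$, matching the paper's first error term). Second, in your step~(ii) you expand the conditional expectation as $\tfrac12\bst''(\tilde\fr_x^N)(\bar r_x^\ell-\tilde\fr_x^N)^2+\mathcal O(\ell^{-1})$ and then control the quadratic form via sub-Gaussianity of $\bar r_x^\ell$; the paper skips this identification and directly bounds the exponential moment $\int e^{\delta N^{\alpha-\kappa}a_x^N\ell\bar\phi_x^\ell}d\tilde\nu_{N,t}\le C$ for small fixed $\delta$ and $\ell=o(N^{2/3})$, which gives the same $\ell^{-1}N^{1+\kappa-\alpha}$ after the entropy inequality. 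Both arguments are equivalent and rest on the same equivalence-of-ensembles input.
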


\begin{proof}
We follow the proof of \cite[Lemma 3.1]{Xu20}.
To shorten the notation, let
\begin{align*}
  a_x^N:=\partial_t\tilde\fr_N \left( t,\frac xN \right), \quad \phi_x:=V^\prime (r_x)-\bst(\tilde\fr_x^N)-\bst'(\tilde\fr_x^N)\big(r_x-\tilde\fr_x^N\big).
\end{align*}
Observe that
\begin{align*}
  \big|a_x^N\big| \le N^{-\alpha} \sum_{j=\pm} jN^\kappa\partial_u\sigma_j|\bsu_j| + \mathcal O\big(N^{\kappa-2\alpha}\big) \le CN^{\kappa-\alpha},\\
  \big|a_x^N-a_y^N\big| \le C|x-y|N^{-1+\kappa-\alpha}.
\end{align*}
Fix a mesoscopic scale $\ell = \ell(N) \ll N$ and define
\begin{align*}
  \phi_x^\ell:=\frac1\ell\sum_{y=0}^{\ell-1} \phi_{x+y}, \quad \bar\phi_x^\ell:=E_{\tilde\nu_{N,t}} \left[ \phi_x^\ell~\Big|~\sum_{y=0}^{\ell-1} r_{x+y} \right].
\end{align*}
First, one can replace $\phi_x$ in $ \cR_{N,t}$ by its block average.
The error is bounded by
\begin{align*}
  \cR_{N,t} - \int_{\Omega_N} \tilde f_{N,t}\sum_{x\in\bT_N} a_x^N\phi_x^\ell\,d\tilde\nu_{N,t} = \int_{\Omega_N} \tilde f_{N,t}\sum_{x\in\bT_N} \phi_x\sum_{y=0}^{\ell-1} \frac{a_x^N-a_{x-y}^N}\ell d\tilde\nu_{N,t}.
\end{align*}
Note that $|V'(r_x)-\bst'(\tilde\fr_x^N)r_x|<c|r|$.
By Lemma \ref{lem:general subgaussian}, $\phi_x$ is sub-Gaussian with uniformly bounded order $c_x^2$.
Applying relative entropy inequality, the left-hand side is bounded from above by
\begin{align*}
  &\widetilde H_N(t)
  +\sum_{x\in\bT_N} \log\int_{\Omega_N} \exp \left\{\phi_x\sum_{y=0}^{\ell-1} \frac{a_x^N-a_{x-y}^N}\ell\right\} d\tilde\nu_{N,t} \\
  \le\,&\widetilde H_N(t) + \sum_{x\in\bT_N} \frac{c_x^2}2\bigg|\sum_{y=0}^{\ell-1} \frac{a_x^N-a_{x-y}^N}\ell\bigg|^2 \le \widetilde H_N(t) + C\ell^2N^{-1+2\kappa-2\alpha},
\end{align*}
Hence, we obtain the estimate
\begin{equation}
\label{eq:chain-1}
  \cR_{N,t} - \int_{\Omega_N} \tilde f_{N,t}\sum_{x\in\bT_N} a_x^N\phi_x^\ell\,d\tilde\nu_{N,t} \le H_N(t) + C\ell^2N^{-1+2\kappa-2\alpha}.
\end{equation}
Next, let $\psi_x^\ell=\psi_x^\ell(r_x,\ldots,r_{x+\ell-1})$ solve the Poisson equation
\begin{align*}
  \sum_{y=0}^{\ell-2} \cY_{t,x+y}^*\cY_{x+y}\psi_x^\ell=\phi_x^\ell-\bar\phi_x^\ell,
\end{align*}
where for each $x$, $\cY_{t,x}^*$ is the adjoint operator of $\cY_x$ with respect to $\tilde\nu_{N,t}$:
\begin{align*}
  \cY_{t,x}^*:=\beta\big[V'(r_{x+1})-V'(r_x)-\bst_{x+1}^N+\bst_x^N\big]-\cY_x.
\end{align*}
Then, for each $x\in\bT_N$,
\begin{align*}
  \int_{\Omega_N} \tilde f_{N,t}a_x^N\big(\phi_x^\ell-\bar\phi_x^\ell\big)d\tilde\nu_{N,t} = \int_{\Omega_N} a_x^N\sum_{y=0}^{\ell-2} \big(\cY_{x+y}\tilde f_{N,t}\big)\big(\cY_{x+y}\psi_x^\ell\big)d\tilde\nu_{N,t}.
\end{align*}
Summing up for $x\in\bT_N$ and applying \CS inequality,
\begin{multline*}
  \int_{\Omega_N} \tilde f_{N,t}\sum_{x\in\bT_N} a_x^N\big(\phi_x^\ell-\bar\phi_x^\ell\big)d\tilde\nu_{N,t} \le \frac{\gamma_NN^{1+\kappa}}{\beta(\ell-1)}\int_{\Omega_N} \sum_{x\in\bT_N} \sum_{y=0}^{\ell-2} \frac{(\cY_{x+y}\tilde f_{N,t})^2}{4\tilde f_{N,t}}d\tilde\nu_{N,t}\\
+\frac{\beta(\ell-1)}{\gamma_NN^{1+\kappa}}\int_{\Omega_N} \sum_{x\in\bT_N} (a_x^N)^2\sum_{y=0}^{\ell-2} \tilde f_{N,t}\big(\cY_{x+y}\psi_x^\ell\big)^2d\tilde\nu_{N,t}.
\end{multline*}
The first term in the right-hand side gives $\beta^{-1}\gamma_NN^{1+\kappa}D_N(\tilde f_{N,t};\tilde\nu_{N,t})$.
To estimate the second term, note that since $V''$ is bounded, we can apply the gradient estimate for Poisson equation (see \cite[Proposition 9.1]{Xu20}, also \emph{cf.} \cite[Theorem 1.1]{Wu09}) to obtain a constant $C$ independent of $x$ or $\ell$, such that
\begin{align*}
  \sum_{y=0}^{\ell-2} \big(\cY_{x+y}\psi_x^\ell\big)^2 \le C\ell^4\sup_{(r_x,\ldots,r_{x+\ell-1})}\sum_{y=0}^{\ell-2} \big(\cY_{x+y}[\phi_x^\ell-\bar\phi_x^\ell]\big)^2.
\end{align*}
Since $\bar\phi_x^\ell$ is a function of $r_x+\ldots+r_{x+\ell-1}$, $\cY_{x+y}\bar\phi_x^\ell\equiv0$.
Also,
\begin{align*}
  \big|\cY_{x+y}\phi_x^\ell\big|=\frac1\ell\big|V'(r_{x+y+1})-V'(r_{x+y})-\bst'(\fr_{x+y+1}^N)+\bst'(\fr_{x+y}^N)\big|\le\frac C\ell.
\end{align*}
Direct computation then shows that
\[  \int_{\Omega_N} \sum_{x\in\bT_N} (a_x^N)^2\sum_{y=0}^{\ell-2} \tilde f_{N,t}\big(\cY_{x+y}\psi_x^\ell\big)^2d\tilde\nu_{N,t} \le C\sum_{x\in\bT_N} (a_x^N)^2\ell^3 \le C'\ell^3N^{1+2\kappa-2\alpha}.\]
Therefore, we obtain the estimate
\begin{align}
\label{eq:chain-2}
  \int_{\Omega_N} \tilde f_{N,t}\sum_{x\in\bT_N} a_x^N\big(\phi_x^\ell-\bar\phi_x^\ell\big)d\tilde\nu_{N,t} \le \frac{\gamma_NN^{1+\kappa}}\beta D_N\big(\tilde f_{N,t};\tilde\nu_{N,t}\big) + C_\beta\frac{\ell^4N^{\kappa-2\alpha}}{\gamma_N}.
\end{align}
For the space variance of $\bar\phi_x^\ell$, relative entropy inequality reads
\[  \int_{\Omega_N} \tilde f_{N,t}\sum_{x\in\bT_N} a_x^N\bar\phi_x^\ell\,d\tilde\nu_{N,t} \le \frac{N^{\kappa-\alpha}}\delta \left[ \widetilde H_N(t) + \frac1\ell\sum_x \log \int_{\Omega_N} e^{\delta N^{\alpha-\kappa}a_x^N\ell\bar\phi_x^\ell}d\tilde\nu_{N,t} \right],\]
for any $\delta>0$.
Observe that the extra factor $\ell$ in the last term above is because that $\bar\phi_x^\ell$ is independent of $\bar\phi_y^\ell$ for any $|x-y|\ge\ell$, see, e.g., \cite[Lemma F.12]{jaram18nonequilireaction} and \cite[Lemma D.3]{Xu20}.
Recall that $N^{\alpha-\kappa}a_x^N$ is bounded.
To treat the exponential moment in above, we apply the equivalence of inhomogeneous ensembles \cite[Proposition 8.3]{Xu20} to obtain that for $\ell=o(N^{\frac23})$ and $\delta$ sufficiently small but fixed,
\[  \log\int_{\Omega_N} \exp\big\{\delta (N^{\alpha-\kappa}a_x^N)\ell\bar\phi_x^\ell\big\}d\tilde\nu_{N,t} \le C.\]
Since $N^{\kappa-\alpha}\le\mathcal O(1)$,
\begin{align}
\label{eq:chain-3}
  \int_{\Omega_N} \tilde f_{N,t}\sum_{x\in\bT_N} a_x^N\bar\phi_x^\ell\,d\tilde\nu_{N,t} \le C\big(\widetilde H_N(t) + \ell^{-1}N^{1+\kappa-\alpha}\big).
\end{align}

Finally, the proof is concluded by choosing $\ell(N)=\gamma_N^{\frac15}N^{\frac{1+\alpha}5}$ and adding up the estimates \eqref{eq:chain-1}, \eqref{eq:chain-2} and \eqref{eq:chain-3}.
\end{proof}

\begin{proof}[Proof of Theorem \ref{thm:chain-rel-ent}]
By \eqref{eq:chain-ent-ineq}, Proposition \ref{prop:chain-E} and \ref{prop:chain-R},
\[  \frac d{dt}\widetilde H_N(t) \le C \Big( \widetilde H_N(t)+\gamma_NN^{\kappa-2\alpha}+N^{-1+2\kappa-2\alpha}
+N^{1+2\kappa-6\alpha}+\gamma_N^{-\frac15}N^{\frac45+\kappa-\frac65\alpha} \Big).\]
Since $\kappa\le\alpha<1/2$, $N^{-1+2\kappa-2\alpha}+N^{1+2\kappa-6\alpha}=o(N^{1-2\alpha})$, so that
\[  \frac d{dt}\widetilde H_N(t) \le C\widetilde H_N(t)+C \left[ \gamma_NN^{\kappa-1}+\big(\gamma_N^{-1}N^{-1+5\kappa+4\alpha}\big)^{\frac15} + o(1) \right] N^{1-2\alpha}.\]
The estimate then follows from the choice of $\gamma_N$ and \Gro inequality.
\end{proof}

Theorem \ref{thm:chain-equi-pert} follows from the exactly same argument as we used in the proof of Theorem \ref{coro1}, hence we omit the proof here.

\appendix

\section{General tools}

In this appendix, we state some model independent tools that is used through the paper.

\subsection{Relative entropy inequality.} In this subsection, we introduce a version of Yau's relative entropy inequality. Let $\{X_t\}_{t \geq 0}$ be a continuous-time  Markov chain on a finite state space $S$, whose infinitesimal generator is  defined as
\[L f (x) = \sum_{y \in S} r(x,y) \big[ f(y) - f(x)\big], \quad x \in S.\]
Above, $r(x,y) \geq 0$ is the rate at which the chain jumps from $x$ to $y$, and $f: S \rightarrow \R$ is any function. Define the \emph{carr{\'e} du champ} operator associated to $L$ as
\[\Gamma f (x) = \sum_{y \in S} r(x,y) \big[ f(y) - f(x)\big]^2, \quad x \in S.\]
Denote by $\mu_t$ the distribution of the process at time $t$ with initial measure $\mu_0$. Let $\{\nu_t\}_{t \geq 0}$  and $\nu$ be a family of measures in $S$ such that  $\nu_t$ is differentiable in time $t$, and $\nu_t(x) > 0$, $\nu(x) > 0$ for any $x \in S$ and any $t \geq 0$. Denote by $f_t$ (respectively $\psi_{t}$) the Radon-Nikodym derivative of $\mu_t$ (respectively $\nu_t$) with respect to $\nu_t$ (respectively $\nu$),
\[f_t (x)= \frac{\mu_t (x)}{\nu_t (x)}, \quad \psi_{t} (x) = \frac{\nu_t(x)}{\nu (x)}, \quad x \in S.\]
Define the relative entropy $H (t)$ as
\[H (t) = H(\mu_t | \nu_t) =  \int  f_t  \log f_t d \nu_t,\]
with the convention $0 \log 0 = 0.$

\begin{lemma}[Yau's relative entropy inequality]\label{lem:relativeEntIne}
	For any $t \geq 0$,
	\[H^\prime (t) \leq - \int \Gamma \sqrt{f_t} d \nu_t+ \int \Big( L_t^* \mathbf{1} - \frac{d}{dt} \log \psi_t \Big) d \mu_t,\]
	where $\mathbf{1}$ is the constant function identical to one, and $L_t^*$ is the adjoint of $L$ with respect to $L^2 (\nu_t)$, which acting on any function $g: S \rightarrow \R$ is given by
	\[L_t^* g (x) = \sum_{y \in S}\Big\{ \frac{\nu_t(y) r(y,x)}{\nu_t (x)} g(y) - r(x,y) g(x) \Big\}, \quad x \in S.\]
\end{lemma}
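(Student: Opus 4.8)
The plan is to differentiate the relative entropy directly in time; since $S$ is finite and $\mu_t,\nu_t$ are smooth in $t$ with $\nu_t>0$, interchanging $\tfrac d{dt}$ with the finite sum is harmless. Writing $H(t)=\sum_{x\in S}\mu_t(x)\big(\log\mu_t(x)-\log\nu_t(x)\big)$ and using $\tfrac d{dt}\sum_x\mu_t(x)=0$, one obtains
\[
  H'(t)=\sum_{x\in S}\dot\mu_t(x)\log f_t(x)-\sum_{x\in S}\mu_t(x)\,\frac d{dt}\log\nu_t(x).
\]
Because $\nu$ does not depend on $t$, $\tfrac d{dt}\log\nu_t=\tfrac d{dt}\log\psi_t$, so the second sum equals $-\int\tfrac d{dt}\log\psi_t\,d\mu_t$, the last term in the asserted bound. (If $\mu_t$ vanishes at some site, one first reduces to the case $\mu_t>0$ everywhere, e.g.\ by irreducibility of the chain or by an approximation argument, so that $\log f_t$ is finite and the manipulations below are legitimate.)

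For the first sum I would invoke the Kolmogorov forward equation $\dot\mu_t(x)=\sum_{y}\big(r(y,x)\mu_t(y)-r(x,y)\mu_t(x)\big)$, substitute $\mu_t=f_t\nu_t$, and relabel $x\leftrightarrow y$ in the gain term to get
\[
  \sum_{x\in S}\dot\mu_t(x)\log f_t(x)=\sum_{x\in S}\nu_t(x)f_t(x)\sum_{y\in S}r(x,y)\big(\log f_t(y)-\log f_t(x)\big)=\int f_t\,(L\log f_t)\,d\nu_t.
\]

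The estimate that carries the argument is the elementary bound $\log s\le s-1$ for $s>0$, used in the form $a^2\log(b^2/a^2)\le 2a(b-a)$ with $a=\sqrt{f_t(x)}$, $b=\sqrt{f_t(y)}$, together with the identity $2a(b-a)=-(a-b)^2+(b^2-a^2)$. Applying this termwise to the last display yields
\[
  \sum_{x\in S}\dot\mu_t(x)\log f_t(x)\le-\sum_{x\in S}\nu_t(x)\sum_{y\in S}r(x,y)\big(\sqrt{f_t(y)}-\sqrt{f_t(x)}\big)^2+\int (Lf_t)\,d\nu_t=-\int\Gamma\sqrt{f_t}\,d\nu_t+\int (Lf_t)\,d\nu_t,
\]
and by the very definition of the adjoint, $\int(Lf_t)\,d\nu_t=\int(Lf_t)\cdot\mathbf 1\,d\nu_t=\int f_t\,(L_t^*\mathbf 1)\,d\nu_t=\int L_t^*\mathbf 1\,d\mu_t$. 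Collecting the three contributions gives the claim.

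I do not expect any serious obstacle: the finiteness of $S$ removes every analytic subtlety (differentiation under the sum, validity of the forward equation, absolute convergence), and the proof is a direct computation. The only points requiring care are the positivity convention for sites of zero mass, handled as above, and the bookkeeping in the relabelling step together with the correct sign in the pointwise inequality $a^2\log(b^2/a^2)\le-(a-b)^2+(b^2-a^2)$, which is precisely what produces the carr\'e du champ term $-\int\Gamma\sqrt{f_t}\,d\nu_t$ with the right constant.
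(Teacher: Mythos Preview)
Your proof is correct. The key pointwise inequality $a^2\log(b^2/a^2)\le 2a(b-a)=-(a-b)^2+(b^2-a^2)$ (which, as you note, follows from $\log s\le s-1$ applied with $s=b/a$) is exactly what yields the carr\'e du champ term with the right constant, and the remaining bookkeeping is accurate.

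The paper does not actually prove this lemma; it merely quotes it and refers the reader to \cite[Lemma A.1]{jara2018non} for a proof. Your argument is the standard one and is essentially what one finds in that reference, so there is no meaningful methodological difference to discuss. One small stylistic remark: your phrase ``the elementary bound $\log s\le s-1$ \dots\ used in the form $a^2\log(b^2/a^2)\le 2a(b-a)$'' could be read as applying the inequality with $s=b^2/a^2$, which only gives the weaker $a^2\log(b^2/a^2)\le b^2-a^2$; it would be clearer to say explicitly that one takes $s=b/a$.
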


We refer the readers to \cite[Lemma A.1]{jara2018non} for proof of the above lemma. Compared to the classical Yau's relative entropy inequality (\emph{cf.}\,\cite[Lemma 6.1.4]{klscaling}), an extra term $\int \Gamma \sqrt{f_t} d \nu_t$ is subtracted in the above version.

\subsection{Flow lemma.}\label{subsec:flow}  In this subsection, we state a flow lemma introduced by Jara and Menezes \cite{jara2018non,jaram18nonequilireaction}. For two measures $\pfrak$ and $\qfrak$ on $\Z^d$, we say a function $\phi: \Z^d \times \{e_i\}_{1 \leq i \leq d} \rightarrow \R$ is a \emph{flow connecting $\pfrak$ to $\qfrak$} if for any $z \in \Z^d$,
\[\pfrak (z) - \qfrak (z) = \sum_{i=1}^d (\phi (z,e_i) - \phi(z-e_i,e_i)).\]
The support of the flow $\phi$ is defined as the set of points $\{z,z+e_i\}$ such that $\phi (z,z+e_i) \neq 0.$ Using the summation by parts formula, for any function $f: \Z^d \rightarrow \R$ and any flow $\phi$ connecting $\pfrak$ to $\qfrak$,
\[\sum_{z \in \Z^d} f(z) (\pfrak (z) - \qfrak (z)) = \sum_{i=1}^d \sum_{z \in \Z^d} \phi (z,e_i) (f(z) - f(z+e_i)).\]

For $\ell \geq 1$, let $\pfrak_\ell (\cdot)$ be the uniform measure on $\Lambda_\ell^d = \{0,1,\ldots,\ell-1\}^d$, \emph{i.e.}\;$\pfrak_\ell (x) = \ell^{-d}$ if $x \in \Lambda_\ell^d$ and $=0$ otherwise.  Let $\mathfrak{q}_\ell = \pfrak_\ell \ast \pfrak_\ell$ be the convolution of $\pfrak_\ell$ with itself, 
\[\qfrak_\ell (y) = \sum_{z \in \T_N^d} \pfrak_\ell (z) \pfrak_\ell (y-z), \quad y \in \T_N^d.\]
In the sequel, we shall use $\qfrak_\ell$ to define the spatial average of a random variable over a large box instead of the usual $\pfrak_\ell$. Note that the support of $\qfrak_\ell$ is contained in $\Lambda_{2\ell - 1}^d$.  Let $\delta_0 (\cdot)$ be the Dirac measure concentrated at the origin. For $\ell \geq 1$, denote
\begin{equation}\label{gdl}
	g_d (\ell) = \begin{cases}
		\ell, &\quad d=1,\\	
		\log \ell, &\quad d=2,\\
		1, &\quad d \geq 3.
	\end{cases}
\end{equation}

The following lemma states that we could construct a flow, which connects $\delta_0$ to $\qfrak_\ell$, such that the cost is at most of order $g_d (\ell)$. We refer the readers to \cite{jaram18nonequilireaction,jara2018non} for its proof.

\begin{lemma}[Flow lemma]\label{lem:flow}
	There exists a finite constant $C_0$ such that for any $\ell \geq 1$, there exists a flow $\phi_\ell$ connecting $\delta_0$ to $\qfrak_\ell$ with support in $\Lambda_{2\ell - 1}^d$ such that
	\[\sum_{i=1}^d \sum_{z \in \Z^d} \phi_\ell(z,e_i)^2 \leq C_0 g_d(\ell), \quad \sum_{i=1}^d \sum_{z \in \Z^d} |\phi_\ell(z,e_i)| \leq C_0 \ell.\]
\end{lemma}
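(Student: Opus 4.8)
The plan is a dyadic multiscale construction, reducing everything to flows that route a point mass to the corners of a cube. Throughout write $\|\phi\|_1:=\sum_{i,z}|\phi(z,e_i)|$, $\|\phi\|_2^2:=\sum_{i,z}\phi(z,e_i)^2$, $\|\phi\|_\infty:=\max_{i,z}|\phi(z,e_i)|$, and similarly $\|\mu\|_p$ for measures on $\Z^d$. First I would reduce the target from $\qfrak_\ell$ to $\pfrak_\ell$: since $\delta_0-\qfrak_\ell=(\delta_0-\pfrak_\ell)+\pfrak_\ell*(\delta_0-\pfrak_\ell)$, it suffices to build a flow $\phi$ connecting $\delta_0$ to $\pfrak_\ell$ with support in $\Lambda_\ell^d$ and $\|\phi\|_2^2\le C_0 g_d(\ell)$, $\|\phi\|_1\le C_0\ell$; then $\phi_\ell:=\phi+\pfrak_\ell*\phi$ (componentwise convolution in the site variable) connects $\delta_0$ to $\qfrak_\ell$, has support in $\Lambda_\ell^d+\Lambda_\ell^d=\Lambda_{2\ell-1}^d$, and Young's inequality with $\|\pfrak_\ell\|_1=1$ gives $\|\pfrak_\ell*\phi\|_2\le\|\phi\|_2$, $\|\pfrak_\ell*\phi\|_1\le\|\phi\|_1$. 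I would carry out the construction of $\phi$ for $\ell=2^k$ a power of two; general $\ell$ follows from a routine modification (peeling off scales in mixed radix, or inserting one auxiliary flow between $\pfrak_\ell$ and $\pfrak_{2^{\lceil\log_2\ell\rceil}}$) that does not change the orders.

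Next comes the telescoping. In any dimension $\pfrak_{2\ell}=\pfrak_2*D_2\pfrak_\ell$, where $\pfrak_2$ is the uniform measure on $\{0,1\}^d=\Lambda_2^d$ and $D_2$ is the pushforward under $x\mapsto 2x$ (this is the tensor product of the obvious one-dimensional identity). Iterating from $\ell=1$ gives $\pfrak_{2^{j+1}}=\pfrak_{2^j}*D_{2^j}\pfrak_2$, so $\delta_0-\pfrak_{2^k}=\sum_{j=0}^{k-1}\pfrak_{2^j}*(\delta_0-D_{2^j}\pfrak_2)$. Let $\psi^{(j)}$ be the flow that routes mass $2^{-d}$ from the origin to each of the $2^d$ corners of the cube $\{0,2^j\}^d$ along monotone (coordinate-by-coordinate) lattice paths: it is supported on edges inside $\Lambda_{2^j+1}^d$, has $\|\psi^{(j)}\|_\infty\le 1$ and $\|\psi^{(j)}\|_1\le d\,2^j$. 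Since convolution commutes with the discrete divergence, $v_j:=\pfrak_{2^j}*\psi^{(j)}$ connects $\pfrak_{2^j}$ to $\pfrak_{2^{j+1}}$, and $\phi:=\sum_{j=0}^{k-1}v_j$ connects $\delta_0$ to $\pfrak_{2^k}$ with support in $\Lambda_{2^k}^d=\Lambda_\ell^d$.

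It remains to estimate $\phi$. For the $\ell^1$ bound, $\|\phi\|_1\le\sum_{j<k}\|\pfrak_{2^j}\|_1\|\psi^{(j)}\|_1\le d\sum_{j<k}2^j=O(\ell)$. For the $\ell^2$ bound the decisive gain is that convolving $\psi^{(j)}$ with the box average $\pfrak_{2^j}$ dilutes the $\Omega(1)$ ``funnel'' near the origin: Young's inequality with exponents $(2,1)\mapsto 2$ gives $\|v_j\|_2\le\|\pfrak_{2^j}\|_2\|\psi^{(j)}\|_1\le d\,2^{-jd/2}\cdot 2^j=d\,2^{j(1-d/2)}$, and likewise $\|v_j\|_\infty\le\|\pfrak_{2^j}\|_\infty\|\psi^{(j)}\|_1\le d\,2^{j(1-d)}$. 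Expanding $\|\phi\|_2^2=\sum_{j<k}\|v_j\|_2^2+2\sum_{j<j'<k}\langle v_j,v_{j'}\rangle$, the diagonal is $\sum_{j<k}d^2 2^{j(2-d)}$, of order $1$, $k$, $2^k$ according as $d\ge3$, $d=2$, $d=1$, hence of order $g_d(\ell)$. For the off-diagonal one uses, when $d=2$, $|\langle v_j,v_{j'}\rangle|\le\|v_{j'}\|_\infty\|v_j\|_1\le C\,2^{j-j'}$ so that $\sum_{j<j'}2^{j-j'}=O(k)=O(g_2(\ell))$; when $d\neq 2$ the crude bound $|\langle v_j,v_{j'}\rangle|\le\|v_j\|_2\|v_{j'}\|_2\le d^2\,2^{(j+j')(1-d/2)}$ already yields a convergent (for $d\ge3$) or geometric (for $d=1$) double sum of the right order. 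Combining with the reduction of the first paragraph gives $\|\phi_\ell\|_2^2\le C_0 g_d(\ell)$ and $\|\phi_\ell\|_1\le C_0\ell$ with support in $\Lambda_{2\ell-1}^d$.

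The main obstacle is precisely the $\ell^2$ estimate at the critical dimension $d=2$: the elementary path flows satisfy $\|\psi^{(j)}\|_2^2\approx 2^j$ because of the funnel at the origin, so summing them naively produces $\ell^2$-energy of order $\ell$, far worse than the required $\log\ell$. The scheme works only because (i) each $\psi^{(j)}$ is convolved with $\pfrak_{2^j}$, which spreads the source over a box of side $2^j$ and supplies the factor $\|\pfrak_{2^j}\|_2=2^{-jd/2}$, and (ii) the cross-scale terms $\langle v_j,v_{j'}\rangle$ are controlled by the mixed $\ell^\infty$–$\ell^1$ bound rather than by Cauchy–Schwarz, which in $d=2$ would cost an extra logarithm. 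Confining the support exactly to $\Lambda_{2\ell-1}^d$ for non-dyadic $\ell$, and verifying that the constants obtained are uniform in $\ell$, are then purely bookkeeping.
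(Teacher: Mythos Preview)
The paper does not supply its own proof of this lemma; it merely quotes the statement and refers to Jara--Menezes \cite{jaram18nonequilireaction,jara2018non} for the argument. Your proposal is a correct and self-contained version of that same multiscale construction: the reduction $\phi_\ell=\phi+\pfrak_\ell*\phi$, the dyadic telescope $\delta_0-\pfrak_{2^k}=\sum_{j<k}\pfrak_{2^j}*(\delta_0-D_{2^j}\pfrak_2)$, and the use of Young's inequality to convert $\|\pfrak_{2^j}\|_2=2^{-jd/2}$ into the gain that makes the $\ell^2$ energy of order $g_d(\ell)$ are exactly the mechanism in the cited references. Your treatment of the cross terms at $d=2$ via the $\ell^\infty$--$\ell^1$ pairing (rather than Cauchy--Schwarz) is the right move and is also the one used there. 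The only point where you are brief is the passage from dyadic to general $\ell$ while keeping the support inside $\Lambda_\ell^d$; this is indeed routine (e.g.\ telescope along $n_0=1<n_1<\dots<n_m=\ell$ with $n_{j+1}\le 2n_j$ and repeat the same step-flow construction), but note that your parenthetical suggestion of inserting a flow to $\pfrak_{2^{\lceil\log_2\ell\rceil}}$ would push the support beyond $\Lambda_\ell^d$, so the mixed-radix route is the one to use.
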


\subsection{Concentration inequalities.} In this subsection, we focus on properties of sub-Gaussian random variables. We say a real-valued random variable $X$ is sub-Gaussian of order $\sigma^2$ if
\[\log E [e^{\theta X}] \leq \frac{1}{2} \sigma^2 \theta^2, \quad \forall \theta \in \R.\]
The following lemma gives an equivalent condition for $X$ to be sub-Gaussian by controlling the exponential moment of $X^2$.

\begin{lemma}\label{lem:subGau}
	If the random variable $X$ is sub-Gaussian of order $\sigma^2$, then
	\[E[e^{\gamma X^2} ]\leq 3, \quad \forall\gamma \leq \frac1{4 \sigma^2}.\]
Inversely, if $E [X] = 0$ and $E [e^{\gamma X^2}] \le C$ for some $\gamma > 0$, $C \ge 1$, then $X$ is sub-Gaussian of order $2C\gamma^{-1}$.
\end{lemma}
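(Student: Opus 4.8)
The plan is to prove the two implications independently. The forward one is essentially a one-line Gaussian-decoupling computation, and the converse a short two-regime estimate in which the hypothesis $E[X]=0$ plays an essential role.

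For the forward implication, assume $\log E[e^{\theta X}]\le\tfrac12\sigma^2\theta^2$ for all $\theta$. I would use the identity $e^{\gamma x^2}=E_Z[e^{\sqrt{2\gamma}\,xZ}]$, where $Z$ is a standard Gaussian independent of $X$, and then exchange the order of integration:
\[ E\big[e^{\gamma X^2}\big]=E_Z\big[E_X[e^{\sqrt{2\gamma}\,ZX}]\big]\le E_Z\big[e^{\gamma\sigma^2Z^2}\big]=(1-2\gamma\sigma^2)^{-1/2}. \]
For $\gamma\le(4\sigma^2)^{-1}$ this is at most $\sqrt2\le3$. (Equivalently one can start from the sub-Gaussian tail bound $P(|X|\ge t)\le2e^{-t^2/(2\sigma^2)}$ and integrate it against $e^{\gamma t^2}$ by the layer-cake formula, which yields $E[e^{\gamma X^2}]\le1+2=3$ directly.)

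For the converse, after the rescaling $X\mapsto\sqrt\gamma\,X$ the claim reduces to: if $E[X]=0$ and $E[e^{X^2}]\le C$ with $C\ge1$, then $\log E[e^{\theta X}]\le C\theta^2$ for every $\theta$; undoing the rescaling then replaces $C\theta^2$ by $C\theta^2/\gamma$, i.e. order $2C\gamma^{-1}$. I would split on the size of $\theta$. For $\theta^2$ of order at least $C^{-2}\log C$, Young's inequality in the form $\theta x\le\tfrac1{2C}x^2+\tfrac C2\theta^2$ combined with Jensen's inequality $E[e^{X^2/(2C)}]\le(E[e^{X^2}])^{1/(2C)}\le C^{1/(2C)}$ gives $E[e^{\theta X}]\le C^{1/(2C)}e^{C\theta^2/2}$, and the prefactor $C^{1/(2C)}$ is absorbed into the remaining $e^{C\theta^2/2}$ precisely because $\theta$ is in this range. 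For smaller $\theta$ I would instead apply the elementary inequality $e^u\le1+u+\tfrac12u^2e^{|u|}$; taking expectations and using $E[X]=0$ leaves $E[e^{\theta X}]\le1+\tfrac12\theta^2E[X^2e^{|\theta X|}]$, and the same Young/Jensen bounds together with the elementary estimate $v\,e^{v/(2C)}\le2e^{-1}e^{v}$ (valid for $v\ge0$, $C\ge1$) control $E[X^2e^{|\theta X|}]$ by a constant multiple of $C$, so that $E[e^{\theta X}]\le1+O(C\theta^2)\le e^{C\theta^2}$ in this range as well.

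The main obstacle is the bookkeeping of constants in the converse: the Young parameter must be taken as a function of $C$ (of order $(2C)^{-1}$) and the splitting threshold as $\theta^2\sim C^{-2}\log C$, so that the spurious additive term $\log C$ coming from the crude exponential-moment bound is genuinely dominated by $C\theta^2$ on the large side, while the mean-zero cancellation takes over on the small side. The assumption $E[X]=0$ cannot be dropped: a constant random variable $X\equiv a\neq0$ satisfies $E[e^{X^2}]\le C$ for a suitable $C$ but is not sub-Gaussian of any finite order, since $\log E[e^{\theta X}]=\theta a$ is not $o(\theta)$ as $\theta\to0$.
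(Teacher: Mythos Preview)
Your forward direction is essentially identical to the paper's: both use the Gaussian linearization $e^{\gamma x^2}=E_Z[e^{\sqrt{2\gamma}\,xZ}]$, swap expectations, and arrive at $(1-2\gamma\sigma^2)^{-1/2}\le\sqrt2$.

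For the converse you reach the same conclusion, but the paper's argument is cleaner and avoids your two-regime split. The paper also starts from $E[e^{sX}]\le 1+\tfrac{s^2}{2}E[X^2e^{|sX|}]$ (using $E[X]=0$), but then applies Young's inequality with the \emph{fixed} parameter $\gamma/2$, namely $|sX|\le\tfrac{\gamma X^2}{2}+\tfrac{s^2}{2\gamma}$, and handles the factor $X^2$ via $ye^y\le e^{2y}$ with $y=\gamma X^2/2$. This yields directly
\[
  E[e^{sX}]\le 1+\frac{Cs^2}{\gamma}\exp\Big(\frac{s^2}{2\gamma}\Big),
\]
and the single elementary inequality $1+Cue^{u/2}\le e^{Cu}$ (valid for all $u\ge0$ when $C\ge1$, since $e^{Cu}\ge e^u\ge e^{u/2}(1+u/2)$ gives the derivative comparison) closes the argument uniformly in $s$. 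Your choice of Young parameter $(2C)^{-1}$ is what forces the additive $\log C$ term and hence the case analysis at the threshold $\theta^2\sim C^{-2}\log C$; with the paper's parameter that bookkeeping disappears. So your proof is correct, but the obstacle you flag is an artifact of the parameter choice rather than an intrinsic difficulty.
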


\begin{proof}
First assume that $X$ is sub-Gaussian of order $\sigma^2$.
	Let $\mathcal{N}$ be a standard normal distribution independent of $X$.
	Denote by $\langle \cdot \rangle$ the expectation with respect to $\mathcal{N}$. Then, for any $\theta \in \R$, 
	\[\log \langle e^{\theta \mathcal{N}}\rangle = \frac{1}{2} \theta^2.\]
	Therefore,
	\[E[e^{\gamma X^2} ] = E [\<e^{\sqrt{2\gamma} X \mathcal{N}} \>] = \<E [e^{\sqrt{2\gamma} X \mathcal{N}}]\>.\]
	Since $X$ is sub-Gaussian of order $\sigma^2$, if $\gamma \leq (4\sigma^2)^{-1}$, the last formula is bounded by
	\[\<e^{\gamma \sigma^2 \mathcal{N}^2}\> = \int_\R \frac{1}{\sqrt{2\pi}} e^{-\frac{1}{2}x^2+\gamma \sigma^2 x^2} dx = \frac{1}{\sqrt{1-2\gamma \sigma^2}} \leq 3.\]
This concludes the proof of the first assertion.

Now assume that $E[X]=0$ and $E[e^{\gamma X^2}]<C$.
For any $s \in \bR$,
$$
  E \big[e^{sX}\big] = 1 + \sum_{k=2}^\infty \frac{E [(sX)^k]}{k!} \le 1 + \frac{s^2}2\sum_{k=0}^\infty \frac{|s|^kE [|X|^{k+2}]}{k!}. 
$$
The summation in the right-hand side is bounded by 
$$
  \frac{s^2}2E \big[X^2e^{|sX|}\big] \le \frac{s^2}2E \left[X^2\exp\left\{\frac {\gamma X^2}2 + \frac{s^2}{2\gamma}\right\}\right] 
$$
for any $c > 0$. 
With the elementary inequality $ye^y \le e^{2y}$, 
$$
  \frac{s^2}2E \left[X^2\exp\left\{\frac {\gamma X^2}2 + \frac{s^2}{2\gamma}\right\}\right] \le \frac{s^2}\gamma\exp\left\{\frac{s^2}{2\gamma}\right\}E \big[e^{\gamma X^2}\big]. 
$$
Hence, by the upper bound of $E [e^{\gamma X^2}]$,
$$
  E \big[e^{sX}\big] \le 1 + \frac{Cs^2}\gamma\exp\left\{\frac{s^2}{2\gamma}\right\} \le \exp\left\{\frac{Cs^2}\gamma\right\}. 
$$
As $s$ is arbitrary, the proof is completed. 
\end{proof}

The constant $3$ in Lemma \ref{lem:subGau} is not optimal and we only need it to be a constant.
The next corollary directly follows from Young's inequality.

\begin{corollary}\label{cor:subGau}
	Let $X_i$ be sub-Gaussian of order $\sigma^2_i$ for $i=1,2$. Then, for any $\gamma \leq (4 \sigma_1 \sigma_2)^{-1}$, 
	\[E[e^{\gamma X_1 X_2}] \leq 3.\]
\end{corollary}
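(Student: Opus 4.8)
The plan is to deduce the corollary from the first assertion of Lemma \ref{lem:subGau} by a Young/Cauchy--Schwarz splitting, exactly as the preceding remark suggests. First I would observe that $X_1X_2\le|X_1X_2|\le\tfrac12(\lambda X_1^2+\lambda^{-1}X_2^2)$ for every $\lambda>0$ by Young's inequality, so that pointwise $e^{\gamma X_1X_2}\le e^{\frac{\gamma\lambda}{2}X_1^2}\,e^{\frac{\gamma}{2\lambda}X_2^2}$. Taking expectations and applying the Cauchy--Schwarz inequality (which requires no independence of $X_1$ and $X_2$) gives
\[
  E\big[e^{\gamma X_1X_2}\big]\le\big(E\big[e^{\gamma\lambda X_1^2}\big]\big)^{1/2}\big(E\big[e^{\gamma\lambda^{-1}X_2^2}\big]\big)^{1/2}.
\]

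Next I would choose $\lambda=\sigma_2/\sigma_1$ (the case $\sigma_1\sigma_2=0$ being trivial, since a sub-Gaussian variable of order $0$ vanishes a.s.). With this choice, the hypothesis $\gamma\le(4\sigma_1\sigma_2)^{-1}$ yields $\gamma\lambda\le(4\sigma_1^2)^{-1}$ and $\gamma\lambda^{-1}\le(4\sigma_2^2)^{-1}$. Since $X_1$ is sub-Gaussian of order $\sigma_1^2$ and $X_2$ of order $\sigma_2^2$, the first assertion of Lemma \ref{lem:subGau} bounds $E[e^{\gamma\lambda X_1^2}]\le3$ and $E[e^{\gamma\lambda^{-1}X_2^2}]\le3$, whence $E[e^{\gamma X_1X_2}]\le 3^{1/2}\cdot3^{1/2}=3$.

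There is no real obstacle here; the proof is a one-line consequence of the previous lemma. The only points worth flagging are that the argument uses solely the upper bound on $\gamma$ together with the two one-variable exponential-square estimates — so neither a lower bound on $\gamma$ nor any moment assumption beyond sub-Gaussianity is needed — and that $X_1,X_2$ are allowed to be correlated.
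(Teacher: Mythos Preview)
Your proof is correct and matches the paper's intended approach: the paper does not spell out a proof but simply remarks that the corollary ``directly follows from Young's inequality,'' and your argument is precisely the natural realization of that remark --- Young's inequality on $X_1X_2$ followed by a splitting of the expectation, with the optimal choice $\lambda=\sigma_2/\sigma_1$ feeding into Lemma~\ref{lem:subGau}. One could equally finish with the AM--GM bound $e^{a}e^{b}\le\tfrac12(e^{2a}+e^{2b})$ in place of Cauchy--Schwarz, which also yields the constant $3$ and avoids any independence assumption; either variant is what the paper has in mind.
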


When treating uniformly bounded state space (Section \ref{sec:gasep} and \ref{sec:gasep-rel-ent}), the following lemma is convenient.

\begin{lemma}[Hoeffding's Lemma, {\cite[Lemma 2.2.2]{boucheron2013concentration}}]\label{lem:hoeffding}
	If the random variable $X \in [a,b]$ for some  $a < b$, the $X - E[X]$  is sub-Gaussian of order $(b-a)^2$.
\end{lemma}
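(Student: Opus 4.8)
The plan is to control directly the logarithmic moment generating function of the centered variable $Y := X - E[X]$. Since $X\in[a,b]$ almost surely, $|Y|\le b-a$, so $\psi(\theta) := \log E[e^{\theta Y}]$ is finite for every $\theta\in\bR$ and, by dominated convergence, is infinitely differentiable on $\bR$ with derivatives obtained by differentiating under the expectation. One reads off immediately $\psi(0)=0$ and $\psi'(0) = E[Y] = 0$, so it remains only to bound $\psi''$.

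The key step is the identity $\psi''(\theta) = \mathrm{Var}_{\P_\theta}(Y)$, where $\P_\theta$ is the exponentially tilted law with density $e^{\theta Y}/E[e^{\theta Y}]$ with respect to $\P$; this follows from the explicit formulas
\[
  \psi'(\theta) = \frac{E[Ye^{\theta Y}]}{E[e^{\theta Y}]}, \qquad
  \psi''(\theta) = \frac{E[Y^2e^{\theta Y}]}{E[e^{\theta Y}]} - \left(\frac{E[Ye^{\theta Y}]}{E[e^{\theta Y}]}\right)^{2}.
\]
Under $\P_\theta$ the variable $Y$ is still supported in the interval $[a-E[X],\,b-E[X]]$, whose length is $b-a$, and any random variable $Z$ supported in an interval of length $L$ satisfies $\mathrm{Var}(Z)\le E[(Z-c)^2]\le L^2/4$ with $c$ the midpoint of that interval. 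Hence $\psi''(\theta)\le (b-a)^2/4$ for every $\theta\in\bR$.

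Combining these, Taylor's formula with Lagrange remainder gives, for each $\theta\in\bR$, some $\xi$ between $0$ and $\theta$ with
\[
  \psi(\theta) = \psi(0) + \theta\,\psi'(0) + \frac{\theta^2}{2}\psi''(\xi)
  \le \frac{\theta^2}{2}\cdot\frac{(b-a)^2}{4} \le \frac{1}{2}(b-a)^2\theta^2,
\]
which is precisely the assertion that $X-E[X]$ is sub-Gaussian of order $(b-a)^2$ in the sense used in the paper. The computation in fact yields the sharper order $(b-a)^2/4$, but only the weaker constant is needed in the applications.

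There is no genuine obstacle — this is the classical Hoeffding lemma — and the only points deserving a line of justification are the differentiation of $\psi$ under the expectation, immediate from $|Y|\le b-a$, and the elementary variance bound $\mathrm{Var}(Z)\le L^2/4$. If one prefers to avoid differentiation, an equally short alternative uses convexity of $y\mapsto e^{\theta y}$ on $[a-E[X],\,b-E[X]]$ to bound $e^{\theta Y}$ pointwise by the corresponding convex combination of its endpoint values, takes expectations using $E[Y]=0$, and then estimates the logarithm of the resulting explicit function of $\theta$ by $\theta^2(b-a)^2/8$ via the same bound $s(1-s)\le 1/4$ applied to its second derivative.
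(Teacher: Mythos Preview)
Your proof is correct; it is the classical argument for Hoeffding's lemma via the exponential tilt and the elementary variance bound $\mathrm{Var}(Z)\le L^2/4$ for a variable supported in an interval of length $L$. The paper itself does not give a proof of this lemma: it merely states it with a citation to \cite[Lemma 2.2.2]{boucheron2013concentration}, whose proof is essentially the one you have written (and in fact yields the sharper order $(b-a)^2/4$, as you note).
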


The next lemma is necessary to estimate the upper bound of exponential moment of volume in oscillator chain see Section \ref{sec:chain} and \ref{sec:chain-rel-ent}.

\begin{lemma}
\label{lem:general subgaussian}
Let $V \in C(\bR)$ be such that $c_-r^2 \le 2V(r) \le c_+r^2$ with constants $c_\pm>0$. 
For $\tau \in \bR$, recall the probability measure $\pi_\tau$ on $\bR$ given by
$$
  \pi_\tau = \frac1{Z(\tau)}e^{-V(r) + \tau r - G(r)}dr, \quad Z(r) = \int_\bR e^{-V(r) + \tau(r)}dr. 
$$
If $|F(r)| \le c|r|$ with constant $c$, then $F - E_{\pi_\tau} [F]$ is sub-Gaussian of order $C = C(\tau, c, c_\pm)$ under $\pi_\tau$.
\end{lemma}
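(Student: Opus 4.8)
The plan is to compare $\pi_\tau$ with Gaussian measures and to use the linear growth bound $|F(r)|\le c|r|$ to reduce the claim to an exponential moment estimate for $r$ itself. First I would record the partition function identity $E_{\pi_\tau}[e^{sr}]=Z(\tau+\beta^{-1}s)/Z(\tau)$, valid for every $s\in\bR$. Since $c_-r^2\le 2V(r)\le c_+r^2$, completing the square bounds the numerator above by the Gaussian integral with variance $(\beta c_-)^{-1}$ and the denominator below by that with variance $(\beta c_+)^{-1}$, giving
\[
  E_{\pi_\tau}\big[e^{sr}\big]\le \sqrt{\tfrac{c_+}{c_-}}\,\exp\Big(\tfrac{\tau s}{c_-}+\tfrac{s^2}{2\beta c_-}+\tfrac{\beta\tau^2}{2}\big(\tfrac1{c_-}-\tfrac1{c_+}\big)\Big)=:A_0\exp\big(a_0 s+\tfrac12 b_0 s^2\big),
\]
with $a_0=\tau/c_-$, $b_0=(\beta c_-)^{-1}$, and $A_0,a_0,b_0$ depending only on $(\beta,\tau,c_\pm)$. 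The same density domination $\pi_\tau(dr)\le C_1 e^{-\frac{\beta c_-}{2}r^2+\beta\tau r}dr$ also yields $|E_{\pi_\tau}[F]|\le c\,E_{\pi_\tau}[|r|]\le M$ for some $M=M(\beta,\tau,c_\pm,c)$, so in particular $F\in L^1(\pi_\tau)$.

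Next, from $|F(r)|\le c|r|$ one has $e^{\theta F(r)}\le e^{c\theta r}+e^{-c\theta r}$ for all $\theta\in\bR$, so the previous bound gives
\[
  E_{\pi_\tau}\big[e^{\theta F}\big]\le 2A_0\,e^{b_0 c^2\theta^2/2}\cosh(a_0 c\theta)\le 2A_0\,\exp\big(\tfrac12(b_0+a_0^2)c^2\theta^2\big),
\]
using $\cosh x\le e^{x^2/2}$. Writing $Y:=F-E_{\pi_\tau}[F]$ and absorbing the shift via $|\theta|M\le\tfrac12 M^2\theta^2+\tfrac12$, this becomes $E_{\pi_\tau}[e^{\theta Y}]\le A_1 e^{b_1\theta^2/2}$ for all $\theta\in\bR$, with $A_1\ge1$ and $b_1$ depending only on $(\beta,\tau,c_\pm,c)$.

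It remains to upgrade this ``sub-Gaussian up to a multiplicative constant'' estimate to genuine sub-Gaussianity, which is the only real work. Following the computation in the proof of Lemma \ref{lem:subGau}, let $\mathcal N$ be an independent standard normal; then for $\gamma<(2b_1)^{-1}$, Fubini and the bound above give $E_{\pi_\tau}[e^{\gamma Y^2}]=\langle E_{\pi_\tau}[e^{\sqrt{2\gamma}\,\mathcal N Y}]\rangle\le A_1\langle e^{b_1\gamma\mathcal N^2}\rangle=A_1(1-2b_1\gamma)^{-1/2}$, and the choice $\gamma=(4b_1)^{-1}$ yields $E_{\pi_\tau}[e^{\gamma Y^2}]\le\sqrt2\,A_1$. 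Since $E_{\pi_\tau}[Y]=0$, the second assertion of Lemma \ref{lem:subGau} then shows that $Y$ is sub-Gaussian of order $2\sqrt2\,A_1/\gamma=8\sqrt2\,A_1 b_1=:C$, a constant depending only on $\tau$, $c$ and $c_\pm$ (the fixed $\beta$ being suppressed, as elsewhere), which is the claim. The main obstacle is thus purely one of bookkeeping: keeping the constants in the Gaussian comparison explicit enough to see that the final order $C$ depends only on the stated parameters, and performing the round trip through $E_{\pi_\tau}[e^{\gamma Y^2}]$ to eliminate the prefactor $A_1$.
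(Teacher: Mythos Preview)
Your proof is correct and rests on the same two ingredients as the paper's: Gaussian comparison for $\pi_\tau$ via $c_-r^2\le 2V\le c_+r^2$, and the converse direction of Lemma~\ref{lem:subGau}. The mechanics differ slightly, though. The paper bounds $E_{\pi_\tau}[e^{tF^2}]$ directly in one Gaussian integral (using $F^2\le c^2r^2$), then centers by the elementary convexity $(F-E[F])^2\le 2F^2+2(E[F])^2$ together with Jensen, obtaining $E_{\pi_\tau}[e^{tF_*^2}]\le E_{\pi_\tau}[e^{4tF^2}]$; Lemma~\ref{lem:subGau} is invoked only once, at the very end. You instead control the moment generating function $E_{\pi_\tau}[e^{sr}]$ through the partition-function ratio, pass to $E_{\pi_\tau}[e^{\theta F}]$, get a sub-Gaussian bound carrying a multiplicative prefactor $A_1$, and then make an extra pass through the independent-normal trick to bound $E_{\pi_\tau}[e^{\gamma Y^2}]$ and remove that prefactor. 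Both routes are sound; the paper's is a step shorter because it never generates the prefactor you later have to eliminate, while yours has the small advantage that the partition-function identity makes the $\tau$-dependence explicit from the outset.
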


\begin{proof}
Notice that since $V \le c_+r^2/2$, for all $\tau \in \bR$, 
$$
  Z(\tau) \ge \int_\bR \exp\left\{-\frac{c_+r^2}2 + \tau r\right\} = \sqrt{\frac{2\pi}{c_+}}\exp\left\{\frac{\tau^2}{2c_+}\right\}. 
$$
Similarly, for $t$ such that $0 < t < c_-/(2c^2)$, 
\begin{align*}
  E_{\pi_\tau} \big[\exp(tF^2)\big] &\le \frac1{Z(\tau)}\int_\bR \exp\left\{-\frac{(c_- - 2tc^2)r^2}2 + \tau r\right\}dr \\
  &\le \sqrt{\frac{c_+}{c_- - 2tc^2}}\exp\left\{\frac{\tau^2}2\left(\frac1{c_- - 2tc^2} - \frac1{c_+}\right)\right\}. 
\end{align*}
Denote $F_* = F - E_{\pi_\tau} [F]$. 
By convexity, for all $t \ge 0$, 
$$
  E_{\pi_\tau} \big[\exp(tF_*^2)\big] \le \exp\big(2tE_{\pi_\tau}^2 [F]\big)E_{\pi_\tau} \big[\exp(2tF^2)\big] \le E_{\pi_\tau} \big[\exp(4tF^2)\big]. 
$$
Therefore, we obtain that 
$$
  E_{\pi_\tau} \left[\exp\left(\frac{c_-}{16c^2}F_*^2\right)\right] \le E_{\pi_\tau} \left[\exp\left(\frac{c_-}{4c^2}F^2\right)\right] \le \sqrt{\frac{2c_+}{c_-}}\exp\left\{\frac{\tau^2}{2}\left(\frac2{c_-} - \frac1{c_+}\right)\right\}. 
$$
We can then conclude with Lemma \ref{lem:subGau}.
\end{proof}

\end{document}